\newcommand{\Mod}[1]{\ (\mathrm{mod}\ #1)}
\tikzstyle{vertex}=[auto=left,circle,draw=black,fill=white, inner sep=1.5]
\newtheorem{theorem}{Theorem}[section]
\newtheorem{lema}[theorem]{Lemma}
\newtheorem{corollary}[theorem]{Corollary}
\newtheorem{ex}[theorem]{Example}
\def \Zl {{\mathbb Z}}
\def \Rl {{\mathbb R}}
\title{HS-integral and Eisenstein integral mixed circulant graphs}
\author{Monu Kadyan and Bikash Bhattacharjya\\
Department of Mathematics\\
Indian Institute of Technology Guwahati, India\\
monu.kadyan@iitg.ac.in; b.bikash@iitg.ac.in}
\date{}
\begin{document}
	\maketitle
	
	\vspace{-0.3in}
	
\begin{center}{\textbf{Abstract}}\end{center}
	
\noindent A mixed graph is called \emph{second kind hermitian integral} (\emph{HS-integral}) if the eigenvalues of its Hermitian-adjacency matrix of the second kind are integers. A mixed graph is called \emph{Eisenstein integral} if the eigenvalues of its (0, 1)-adjacency matrix are Eisenstein integers. We characterize the set $S$ for which a mixed circulant graph $\text{Circ}(\mathbb{Z}_n, S)$ is HS-integral. We also show that a mixed circulant graph is Eisenstein integral if and only if it is HS-integral. Further, we express the eigenvalues and the  HS-eigenvalues of unitary oriented  circulant graphs in terms of generalized M$\ddot{\text{o}}$bius function.

\vspace*{0.3cm}
\noindent 
\textbf{Keywords.} integral graphs; HS-integral mixed graph; Eisenstein integer; mixed circulant graph. \\
\textbf{Mathematics Subject Classifications:} 05C50, 05C25

\section{Introduction}

A \emph{mixed graph} $G$ is a pair $(V(G),E(G))$, where $V(G)$ is a nonempty finite
set and $E(G)$ is a subset of $\left(V(G) \times V(G)\right)\setminus \{(u,u)~|~u\in V(G)\}$. The sets $V(G)$ and $E(G)$ are called the vertex set and the edge set of $G$, respectively. If $(v,u)\in E(G)$ if and only if $(u,v)\in E(G)$, then $G$ is called a \emph{simple graph}. If $(v,u)\not\in E(G)$ whenever $(u,v)\in E(G)$), then $G$ is called an \emph{oriented graph}.

Let $G$ be a mixed graph on $n$ vertices. The $(0, 1)$-\textit{adjacency matrix} and the \textit{Hermitian-adjacency matrix of the second kind} of $G$ are denoted by $\mathcal{A}(G):=(a_{uv})_{n\times n}$ and $\mathcal{H}(G):=(h_{uv})_{n\times n}$, respectively, where 
\[a_{uv} = \left\{ \begin{array}{rl}
	1 &\mbox{ if }
	(u,v)\in E \\ 
	0 &\textnormal{ otherwise,}
\end{array}\right.     ~~~~~\text{ and }~~~~~~ h_{uv} = \left\{ \begin{array}{cl}
	1 &\mbox{ if }
	(u,v)\in E \textnormal{ and } (v,u)\in E \\ \frac{1+i\sqrt{3}}{2} & \mbox{ if } (u,v)\in E \textnormal{ and } (v,u)\not\in E \\
	\frac{1-i\sqrt{3}}{2} & \mbox{ if } (u,v)\not\in E \textnormal{ and } (v,u)\in E\\
	0 &\textnormal{ otherwise.}
\end{array}\right.\] 

The Hermitian-adjacency matrix of the second kind was introduced by Bojan Mohar~\cite{mohar2020new}. The matrix obtained by replacing the numbers $\frac{1+i\sqrt{3}}{2}$ and $\frac{1-i\sqrt{3}}{2}$ of $\mathcal{H}(G)$ by the numbers $i$ and $-i$, respectively, is called the \emph{Hermitian-adjacency} matrix (of the first kind) of $G$. Indeed, these Hermitian-adjacency matrices of mixed graphs are special cases of the adjacency matrix of weighted directed graphs discussed in \cite{bapat2012weighted}.

By an \emph{eigenvalue} (resp. \emph{HS-eigenvalue}) of $G$, we mean an eigenvalue of $\mathcal{A}(G)$ (resp. $\mathcal{H}(G)$). Similarly, the spectrum (resp. \emph{HS-spectrum}) of $G$ is the multi-set of the eigenvalues (resp. HS-eigenvalues) of $G$.

A simple graph is said to be \textit{integral} if all of its eigenvalues are integers. A mixed graph $G$ is said to be \textit{HS-integral} if all of its HS-eigenvalues are integers. A mixed graph $G$ is said to be \textit{Eisenstein integral} if all of its eigenvalues are Eisenstein integers. Note that complex numbers of the form $a+b\omega_3$, where $a,b\in \mathbb{Z}, \omega_3=\frac{-1+i\sqrt{3}}{2}$, are known as \emph{Eisenstein integers}. If $G$ is a simple graph, then $\mathcal{A}(G)=\mathcal{H}(G)$. As a result, the terms HS-eigenvalue, HS-spectrum, and HS-integrality of $G$ have the same meaning with the terms eigenvalue, spectrum, and integrality of $G$ in the case of a simple graph $G$.

In 1974, Harary and Schwenk~\cite{harary1974graphs} proposed a characterization of integral  graphs. This problem has inspired a lot of interest over the last decades. For more results on integral graphs, we refer the reader to \cite{ahmadi2009graphs, balinska2002survey, csikvari2010integral, watanabe1979note, watanabe1979integral}.

We consider $\Gamma$ to be a finite group throughout the paper. Let $S$ be a subset of $\Gamma$ that does not contain the identity element.  If $S$ is closed under inverse (resp. $a^{-1} \not\in S$ for all $a\in S$), it is said to be \textit{symmetric} (resp. \textit{skew-symmetric}). Define $\overline{S}= \{u\in S: u^{-1}\not\in S \}$. Then $S\setminus \overline{S}$ is symmetric, while $\overline{S}$ is skew-symmetric. The \textit{mixed Cayley graph} $\text{Cay}(\Gamma,S)$ is a mixed graph with $V(\text{Cay}(\Gamma,S))=\Gamma$ and $E(\text{Cay}(\Gamma,S))=\{ (a,b):a,b\in \Gamma, ba^{-1}\in S \}$. If $S$ is symmetric (resp. skew-symmetric), then $\text{Cay}(\Gamma,S)$ is a \textit{simple Cayley graph} (resp. \textit{oriented Cayley graph}). If $G=\Zl_n$, then the graph $\text{Cay}(\Gamma, S)$ is called a \emph{circulant} graph, and it is denoted by $\text{Circ}(\mathbb{Z}_n, S)$. 

In this paper, we characterize the set $S$ for which a mixed circulant graph $\text{Circ}(\mathbb{Z}_n, S)$ is HS-integral. We also show that a mixed circulant graph is Eisenstein integral if and only if it is HS-integral.  Indeed, the characterizations of this paper are special cases of that in~\cite{ours1}, in which we considered the group to be abelian. However, due to the speciality of the group $\Zl_n$,  intermediary results and their proof techniques in this paper are different than that in~\cite{ours1}. In this paper, we also express the eigenvalues and the  HS-eigenvalues of unitary oriented  circulant graphs in terms of generalized M$\ddot{\text{o}}$bius function, which was not considered in~\cite{ours1}. We discussed integrality of the eigenvalues of the Hermitian-adjacency matrix (of the first kind) of mixed circulant graphs in~\cite{ours2}. The results in this paper are influenced from that in~\cite{ours2}. As a result, proof techniques and flow of results in this paper have some similarities with that in~\cite{ours2}.

This paper is organized as follows. In Section~\ref{prel}, some preliminary concepts and results are discussed. In particular, we express the HS-eigenvalues of a mixed circulant graph as a sum of HS-eigenvalues of a simple circulant graph and an oriented circulant graph. In Section~\ref{sufficient}, we obtain a sufficient condition on the connection set for the HS-integrality of an oriented circulant graph. In Section~\ref{necessity-mixed}, we first characterize HS-integrality of oriented circulant graphs by proving the necessity of the condition obtained in Section~\ref{sufficient}. After that, we extend this characterization to mixed circulant graphs. In Section~\ref{Eisenstein}, we prove that a mixed circulant graph is Eisenstein integral if and only if it is HS-integral. In the last section, we express the eigenvalues and the HS-eigenvalues of unitary oriented circulant graphs in terms of generalized M$\ddot{\text{o}}$bius function.

%%%%%%%%%%%%%%%%%%%%%%%%%%%%%%%%%%%%%%%%%%%%%%%%%%%%%%%%%%%%%%%%
\section{Preliminaries}\label{prel}

An $n\times n$ \textit{circulant matrix} $C$ have the form 
	
	\begin{equation*}
		C = 
		\begin{bmatrix}
			c_{0} & c_{1} &c_2& \cdots & c_{n-1} \\
			c_{n-1} & c_{0} &c_1& \cdots & c_{n-2} \\
			c_{n-2} & c_{n-1} &c_0& \cdots & c_{n-3} \\
			\vdots & \vdots & \vdots & \ddots & \vdots \\
			c_{1} & c_{2} &c_3& \cdots & c_{0} 
		\end{bmatrix},
	\end{equation*} 
	where each row is a cyclic shift of the row above it. The circulant matrix $C$ is also denoted by $\text{Circ}(c_0,c_1,\ldots,c_{n-1})$. Note that the $(j,k)$-th entry of $C$ is $c_{k-j\Mod n}$. A circulant matrix is diagonalizable by the matrix $F$ whose $j$-th column is given by 
	$$F_j= \frac{1}{\sqrt{n}}\left[ 1 ~~ \omega_n^{j} ~~ \hdots ~~ \omega_n^{(n-1)j}\right]^T,$$
	 where $\omega_n=\exp{\left(\frac{2\pi i}{n}\right)}$ and $0\leq j \leq n-1$. The eigenvalues of $C$ are given by
\begin{equation}\label{circev}
\lambda_j=\sum\limits_{k=0}^{n-1}c_k \omega_n^{jk} \textnormal{ for } j\in \{0,1,...,n-1\}.
\end{equation} 
See \cite{circulant} for more details on circulant matrices.

\begin{lema}\label{SpecMixCayGraph} Let $S$ be a subset of $\mathbb{Z}_n$ such that $0\notin S$. Then the HS-spectrum of the mixed circulant graph $\text{Circ}(\mathbb{Z}_n,S)$ is $\{\gamma_0,\gamma_1,...,\gamma_{n-1} \}$, where $\gamma_j=\lambda_j+\mu_j$, 
$$\lambda_j= \sum\limits_{k\in S \setminus \overline{S}} \omega_n^{jk}~~ \text{ and } ~~\mu_j=  \sum\limits_{k\in \overline{S}} (\omega_6\omega_n^{jk} + \omega_6^5\omega_n^{-jk}) \textnormal{ for } j\in \{0,1,...,n-1\}.$$
\end{lema}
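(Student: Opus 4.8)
The plan is to compute the entries of the Hermitian-adjacency matrix $\mathcal{H}(G)$ of the second kind for $G = \text{Circ}(\mathbb{Z}_n, S)$ and recognize it as a circulant matrix, then apply the formula \eqref{circev} for the eigenvalues of a circulant matrix. First I would observe that $V(G) = \mathbb{Z}_n$ and $(u,v) \in E(G)$ if and only if $v - u \in S$, so the $(u,v)$-entry $h_{uv}$ of $\mathcal{H}(G)$ depends only on $v - u \pmod n$. Setting $k = v - u$, we have $h_{uv} = 1$ when both $k \in S$ and $-k \in S$ (i.e.\ $k \in S \setminus \overline{S}$), $h_{uv} = \frac{1+i\sqrt{3}}{2}$ when $k \in S$ but $-k \notin S$ (i.e.\ $k \in \overline{S}$), $h_{uv} = \frac{1-i\sqrt{3}}{2}$ when $-k \in S$ but $k \notin S$ (i.e.\ $-k \in \overline{S}$, equivalently $k \in -\overline{S}$), and $h_{uv} = 0$ otherwise. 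Hence $\mathcal{H}(G) = \text{Circ}(h_0, h_1, \ldots, h_{n-1})$ with $h_k$ as just described; note $h_0 = 0$ since $0 \notin S$.

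Next I would record the key numerical identifications $\frac{1+i\sqrt{3}}{2} = \omega_6$ and $\frac{1-i\sqrt{3}}{2} = \overline{\omega_6} = \omega_6^5$, where $\omega_6 = \exp(2\pi i/6)$; these follow from $\omega_6 = \cos(\pi/3) + i\sin(\pi/3)$. Then, applying \eqref{circev}, the $j$-th HS-eigenvalue is $\gamma_j = \sum_{k=0}^{n-1} h_k \omega_n^{jk}$. I would split this sum according to the three cases: the terms with $k \in S \setminus \overline{S}$ contribute $\sum_{k \in S\setminus\overline{S}} \omega_n^{jk} =: \lambda_j$; the terms with $k \in \overline{S}$ contribute $\sum_{k \in \overline{S}} \omega_6\, \omega_n^{jk}$; and the terms with $k \in -\overline{S}$ contribute $\sum_{k \in -\overline{S}} \omega_6^5\, \omega_n^{jk} = \sum_{k \in \overline{S}} \omega_6^5\, \omega_n^{-jk}$ after re-indexing $k \mapsto -k$. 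Summing the last two gives exactly $\mu_j = \sum_{k\in\overline{S}}(\omega_6 \omega_n^{jk} + \omega_6^5 \omega_n^{-jk})$, so $\gamma_j = \lambda_j + \mu_j$, as claimed.

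The only genuinely careful point — and the step I would treat as the ``main obstacle,'' though it is minor — is verifying that the index sets $S \setminus \overline{S}$, $\overline{S}$, and $-\overline{S}$ are pairwise disjoint and that their union is exactly $\{k \in \mathbb{Z}_n : h_k \neq 0\}$, so that no term is double-counted or omitted. This is immediate from the definition $\overline{S} = \{u \in S : -u \notin S\}$: if $k \in \overline{S}$ then $-k \notin S$ so $-k \notin \overline{S}$ and $k \notin S\setminus\overline{S}$, giving disjointness of $\overline{S}$ from both $-\overline{S}$ and $S \setminus \overline{S}$; and $S \setminus \overline{S} \subseteq S$ is disjoint from $-\overline{S}$ because $k \in S \setminus \overline{S}$ forces $-k \in S$, hence $-k \notin \overline{S}$, i.e.\ $k \notin -\overline{S}$. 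Combined with the observation that $h_k \neq 0$ precisely when $k$ lies in one of these three sets, the computation above is complete, and summing over $j \in \{0, 1, \ldots, n-1\}$ yields the full HS-spectrum $\{\gamma_0, \gamma_1, \ldots, \gamma_{n-1}\}$.
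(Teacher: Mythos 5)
Your proposal is correct and follows essentially the same route as the paper: identify $\mathcal{H}(\text{Circ}(\mathbb{Z}_n,S))$ as the circulant matrix $\text{Circ}(c_0,\ldots,c_{n-1})$ with $c_k$ equal to $1$, $\omega_6$, $\omega_6^5$, or $0$ according as $k$ lies in $S\setminus\overline{S}$, $\overline{S}$, $\overline{S}^{-1}$, or none of these, and then apply the eigenvalue formula for circulant matrices. Your extra check that the three index sets are pairwise disjoint is a detail the paper leaves implicit, but it does not change the argument.
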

\begin{proof}Let
	$$c_{s} = \left\{ \begin{array}{rl}
		1 & \mbox{ if } s \in S \setminus \overline{S}\\
		\omega_6 & \mbox{ if } s \in \overline{S} \\
		\omega_6^5 & \mbox{ if } s \in \overline{S}^{-1}\\
		0 &\textnormal{ otherwise.}
	\end{array}\right.$$ 
Then the HS-spectrum of $\text{Circ}(\mathbb{Z}_n,S)$ is same as the spectrum of $\text{Circ}(c_0,...,c_{n-1})$. Now the proof follows from Equation~(\ref{circev}). 
\end{proof}

From Lemma~\ref{SpecMixCayGraph}, we observe that the HS-eigenvalues of a mixed circulant graph $\text{Circ}(\mathbb{Z}_n,S)$ are the sum of the HS-eigenvalues of the mixed graphs $\text{Circ}(\mathbb{Z}_n,S \setminus \overline{S})$ and $\text{Circ}(\mathbb{Z}_n,\overline{S})$.  Note that $\omega_6=-\omega_3^2$ and $\omega_6^5=-\omega_3$. Therefore the eigenvalue $\mu_j$ can also be written as $\mu_j= -\sum\limits_{k\in \overline{S}} (\omega_3^2\omega_n^{jk} + \omega_3\omega_n^{-jk})$. Further, one can see that the eigenvalues of $\text{Circ}(\mathbb{Z}_n,S)$ are given by $\sum\limits_{k\in S} \omega_n^{jk}$ for each $j\in \{0,1,...,n-1\}$.

Let $n\geq 2$ be a fixed positive integer. We review some basic definitions and notations from \cite{2006integral}. For a divisor $d$ of $n$, define 
\begin{align*}
&M_n(d)=\{dk: 1\leq dk\leq n-1\}~\text{ and}\\
&G_n(d)=\{dk: 1\leq dk\leq n-1, \gcd(dk,n)=d \}. 
\end{align*}
It is clear that $M_n(n)=G_n(n)=\emptyset$, $M_n(d)=dM_{\frac{n}{d}}(1)$ and $G_n(d)=dG_{\frac{n}{d}}(1)$.
	
	\begin{lema} \cite{2006integral}\label{BasicProp} If $n=dg$ for some $d,g\in \mathbb{Z}$ then $M_n(d)=\bigcup\limits_{h\mid g} G_n(hd)$.
	\end{lema}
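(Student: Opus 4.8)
The plan is to prove the set equality $M_n(d)=\bigcup_{h\mid g} G_n(hd)$ by double inclusion, exploiting the definitions directly. Write $n = dg$. Recall that $M_n(d)$ consists of all multiples of $d$ strictly between $0$ and $n$, while $G_n(hd)$ consists of those multiples of $hd$ in the same range whose gcd with $n$ is exactly $hd$.

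First I would prove $\bigcup_{h\mid g} G_n(hd) \subseteq M_n(d)$. This direction is immediate: if $x \in G_n(hd)$ for some divisor $h$ of $g$, then $x$ is a multiple of $hd$, hence a multiple of $d$, and $1 \le x \le n-1$, so $x \in M_n(d)$. No case analysis is needed here.

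The substantive direction is $M_n(d) \subseteq \bigcup_{h\mid g} G_n(hd)$. Take $x \in M_n(d)$, so $x = dk$ with $1 \le x \le n-1$. Since $d \mid x$ and $d \mid n$, the integer $\gcd(x,n)$ is a multiple of $d$; write $\gcd(x,n) = hd$. The key observation is that $hd \mid n = dg$ forces $h \mid g$, so $h$ is a legitimate index in the union. By construction $x$ is a multiple of $hd$ (since $hd = \gcd(x,n)$ divides $x$), we have $1 \le x \le n-1$, and $\gcd(x,n) = hd$; these are exactly the conditions for $x \in G_n(hd)$. Hence $x$ lies in the union. Finally, I would note that the union is in fact disjoint — each $x \in M_n(d)$ has a uniquely determined value of $\gcd(x,n)$ — though disjointness is not required for the statement as phrased.

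I do not anticipate a genuine obstacle: the only point requiring a moment's care is the implication $hd \mid n \Rightarrow h \mid g$, which is where the hypothesis $n = dg$ is used, and the bookkeeping that $\gcd(x,n)$ is automatically a multiple of $d$ when $d$ divides both $x$ and $n$. Everything else is unwinding definitions.
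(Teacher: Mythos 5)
Your proof is correct. Note that the paper itself states Lemma~\ref{BasicProp} as a citation to So's paper \cite{2006integral} and gives no proof, so there is no internal argument to compare against; your double-inclusion argument, with the key step that $d\mid x$ and $d\mid n$ force $\gcd(x,n)=hd$ for some $h\mid g$, is the standard and natural one, and every step checks out.
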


	Wasin So \cite{2006integral} characterized integral circulant graphs in the following theorem.

	\begin{theorem}\cite{2006integral}\label{2006integral}
		The simple circulant graph $\text{Circ}(\mathbb{Z}_n,S)$ is integral if and only if $S =\bigcup\limits_{d\in \mathscr{D}}G_n(d)$, where $\mathscr{D} \subseteq \{ d :d\mid n\}$.
	\end{theorem}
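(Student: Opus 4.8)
The plan is to reduce integrality to a statement about the Galois action on $n$-th roots of unity. As noted after Lemma~\ref{SpecMixCayGraph} (or directly from Equation~(\ref{circev}) with $c_k$ the indicator of $S$), the eigenvalues of $\text{Circ}(\mathbb{Z}_n,S)$ are $\lambda_j=\sum_{k\in S}\omega_n^{jk}$ for $j\in\{0,1,\dots,n-1\}$. Each $\lambda_j$ is a $\mathbb{Z}$-combination of roots of unity, hence an algebraic integer lying in $\mathbb{Q}(\omega_n)$; and a rational algebraic integer is a rational integer. So the graph is integral if and only if every $\lambda_j\in\mathbb{Q}$, i.e., if and only if every $\lambda_j$ is fixed by $\mathrm{Gal}(\mathbb{Q}(\omega_n)/\mathbb{Q})\cong(\mathbb{Z}/n\mathbb{Z})^{\ast}$. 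Writing $\sigma_a$ for the automorphism sending $\omega_n\mapsto\omega_n^a$ with $\gcd(a,n)=1$, and using that the $c_k$ are rational, one computes $\sigma_a(\lambda_j)=\lambda_{aj\bmod n}$. Hence the graph is integral if and only if $j\mapsto\lambda_j$ is constant on every orbit of the multiplicative action of $(\mathbb{Z}/n\mathbb{Z})^{\ast}$ on $\mathbb{Z}_n$ — and (by CRT) the orbit of a nonzero $j$ is exactly $G_n(\gcd(j,n))$, while $\{0\}$ is its own orbit.

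Then I would push this orbit-invariance from the spectrum back to the connection set by Fourier inversion on $\mathbb{Z}_n$: $\mathbf{1}_S(k)=\frac{1}{n}\sum_{j=0}^{n-1}\lambda_j\omega_n^{-jk}$. Assuming integrality, each $\lambda_j$ is a fixed rational integer and $\lambda_{aj}=\lambda_j$ for every unit $a$; replacing $k$ by $ak$ and reindexing $j\mapsto a^{-1}j$ in the inversion formula then gives $\mathbf{1}_S(ak)=\mathbf{1}_S(k)$ for every unit $a$ and every $k$. Thus $S$ is a union of orbits; since $0\notin S$ and $\mathbb{Z}_n\setminus\{0\}=\bigsqcup_{d\mid n,\,d<n}G_n(d)$, this forces $S=\bigcup_{d\in\mathscr{D}}G_n(d)$ with $\mathscr{D}=\{d\mid n:G_n(d)\subseteq S\}$, which is the claimed form. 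This establishes necessity.

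For the sufficiency direction I would start from $S=\bigcup_{d\in\mathscr{D}}G_n(d)$, so $\lambda_j=\sum_{d\in\mathscr{D}}\big(\sum_{k\in G_n(d)}\omega_n^{jk}\big)$, and it is enough to show each inner sum is an integer. Since $G_n(d)$ is closed under multiplication by units of $\mathbb{Z}_n$, each $\sigma_a$ only permutes the terms of $\sum_{k\in G_n(d)}\omega_n^{jk}$, so this sum is Galois-fixed, hence rational, hence (as an algebraic integer) a rational integer — concretely it is the Ramanujan sum obtained by writing $k=dk'$ with $\gcd(k',n/d)=1$. Hence every $\lambda_j\in\mathbb{Z}$. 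The main obstacle is the necessity direction: extracting the structure of $S$ from the mere integrality of all $\lambda_j$. The Galois action together with Fourier inversion is precisely the device that converts ``all $\lambda_j$ rational'' into ``$S$ invariant under $(\mathbb{Z}/n\mathbb{Z})^{\ast}$''; once this symmetry is in hand the rest is bookkeeping. (One could instead argue sufficiency via the sets $M_n(d)$ and Lemma~\ref{BasicProp}, since $\sum_{k\in M_n(d)}\omega_n^{jk}$ is visibly an integer, but the orbit/Galois argument settles both implications at once.)
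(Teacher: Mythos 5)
The paper offers no proof of this statement: Theorem~\ref{2006integral} is quoted from So~\cite{2006integral} and used as a black box (for instance in Lemma~\ref{NewTooSmallLemma}, Lemma~\ref{Sqrt3NecessIntSum} and Theorem~\ref{CharaOfIntegMixedGraph}), so there is no internal proof to compare yours against. Judged on its own, your argument is correct and complete. The identity $\sigma_a(\lambda_j)=\lambda_{aj}$, the fact that a rational algebraic integer is a rational integer, and the inversion $\mathbf{1}_S(k)=\frac{1}{n}\sum_{j}\lambda_j\omega_n^{-jk}$ together deliver both implications; the only step you pass over quickly is that the orbit of $j\neq 0$ under multiplication by $(\mathbb{Z}/n\mathbb{Z})^{\ast}$ is \emph{all} of $G_n(\gcd(j,n))$, which requires the surjectivity of the reduction map $(\mathbb{Z}/n\mathbb{Z})^{\ast}\to(\mathbb{Z}/(n/d)\mathbb{Z})^{\ast}$ --- a standard fact, and your appeal to CRT is the right justification. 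It is worth observing that your two devices are precisely the ones this paper deploys for its harder mod-$3$ refinement: the inversion $v=\frac{1}{n}E^{\ast}u$ in Lemma~\ref{MainLemmNeceCond} is your Fourier-inversion step, and the appeal there to irreducibility of $\Phi_{\frac{n}{d}}(x)$ over $\mathbb{Q}(\omega_3)$ to force $v_s=v_t$ for $s,t$ in the same class is the polynomial-language form of your Galois-conjugacy argument (compare also the proof of Theorem~\ref{CharaOfOrieCayGraphNot3}). So your proof is consistent in spirit with the paper's own methods, merely applied to the classical unweighted case that the authors chose to cite rather than reprove; your closing remark that sufficiency could instead be routed through $M_n(d)$ and Lemma~\ref{BasicProp} matches how the paper actually uses So's theorem downstream.
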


Let $n \equiv 0 \Mod 3$. For a divisor $d$ of $\frac{n}{3}, r\in \{0,1,2\}$ and $g\in \mathbb{Z}$, define the following sets:
\begin{align*}
&  M_{n,3}^r(d)=\{dk: 0\leq dk \leq n-1 , k \equiv r \Mod 3 \},\\
&  G_{n,3}^r(d)=\{ dk: 1\leq dk \leq n-1 , \gcd(dk,n )= d,k\equiv r \Mod 3 \},\\
&  D_{g,3}= \{ k: k \text{ divides } g, k \not\equiv 0 \Mod 3\}~\text{ and}\\
&  D_{g,3}^r=\{ k: k \text{ divides } g, k \equiv r \Mod 3\} .
\end{align*}
It is clear $D_{g,3}=D_{g,3}^1 \bigcup\limits D_{g,3}^2$. 
 
\begin{lema}\label{FirstLemmaSetEqua} Let $n\equiv 0 \Mod 3$, $d$ divide $\frac{n}{3}$ and $g= \frac{n}{3d}$. Then the following hold:
		\begin{enumerate}[label=(\roman*)]
			\item $M_{n,3}^1(d) \cup M_{n,3}^2(d)=\bigcup\limits_{h\in D_{g,3}} G_n(hd) $;
			\item $M_{n,3}^0(d)= M_n(3d)\cup \{ 0\}$.
		\end{enumerate}
	\end{lema}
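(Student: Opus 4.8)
The plan is to treat the two parts separately, with part (ii) being essentially a change of variables and part (i) resting on Lemma~\ref{BasicProp}.

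For part (ii), I would write a typical element of $M_{n,3}^0(d)$ as $dk$ with $0\le dk\le n-1$ and $3\mid k$, say $k=3\ell$ with $\ell\ge 0$; then $dk=3d\ell$ ranges over exactly $\{0\}$ (the case $\ell=0$) together with $\{3d\ell:1\le 3d\ell\le n-1\}=M_n(3d)$. Conversely every element of $M_n(3d)\cup\{0\}$ is of this form, so the two sets coincide. I expect this step to be routine.

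For part (i), the first observation is that $M_{n,3}^1(d)\cup M_{n,3}^2(d)$ is precisely $\{dk:1\le dk\le n-1,\ 3\nmid k\}$, i.e. the set of those $m\in M_n(d)$ whose cofactor $m/d$ is not divisible by $3$ (note $dk=0$ forces $k=0$, so $0$ lies in neither side). Next, since $n=d\cdot(3g)$, Lemma~\ref{BasicProp} gives the partition $M_n(d)=\bigcup_{h\mid 3g}G_n(hd)$. I would then split the divisors $h$ of $3g$ according to whether $3\mid h$, using two elementary facts. First, a divisor $h$ of $3g$ satisfies $3\nmid h$ if and only if $h\mid g$ and $3\nmid h$, i.e. if and only if $h\in D_{g,3}$ (because $\gcd(h,3)=1$ together with $h\mid 3g$ forces $h\mid g$). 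Second, for $m=dk\in G_n(hd)$ one has $hd=\gcd(dk,n)=d\gcd(k,3g)$, hence $h=\gcd(k,3g)$, and since $3\mid 3g$ this yields $3\mid h\iff 3\mid k$. Combining these, $G_n(hd)$ consists entirely of elements $dk$ with $3\nmid k$ when $h\in D_{g,3}$, and entirely of elements $dk$ with $3\mid k$ when $3\mid h$; feeding this into the partition of $M_n(d)$ gives $\bigcup_{h\in D_{g,3}}G_n(hd)=\{dk\in M_n(d):3\nmid k\}=M_{n,3}^1(d)\cup M_{n,3}^2(d)$, which is the claim.

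The only genuine care needed is bookkeeping: making sure that in $G_n(hd)$ the cofactor is taken relative to $d$ (not $hd$), that the identity $\gcd(dk,n)=d\gcd(k,n/d)$ is applied with $n/d=3g$, and that the edge case $0$ is handled consistently in each part. There is no analytic or deep combinatorial obstacle here; once Lemma~\ref{BasicProp} is available, the argument reduces to sorting the divisors of $3g$ by their residue modulo $3$.
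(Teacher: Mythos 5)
Your proposal is correct and follows essentially the same route as the paper: part (ii) is the same direct change of variables, and part (i) rests on Lemma~\ref{BasicProp} applied to $n=d\cdot 3g$ together with the observation that an element $dk$ of the block $G_n(hd)$ has $3\mid k$ exactly when $3\mid h$ (which you extract cleanly from $\gcd(dk,n)=d\gcd(k,3g)$, where the paper instead writes $dk=\alpha hd$ and argues on $\alpha$ and $h$ separately). The bookkeeping points you flag, including that $h\mid 3g$ with $3\nmid h$ forces $h\mid g$, are exactly the ones the paper's proof also uses.
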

	\begin{proof} 
		\begin{enumerate}[label=(\roman*)]
			\item Let $dk\in M_{n,3}^1(d) \cup M_{n,3}^2(d)$. Lemma~\ref{BasicProp} gives $M_{n,3}^1(d) \cup M_{n,3}^2(d) \subseteq M_n(d)=\bigcup\limits_{h\mid 3g} G_n(hd)$. So there exists a divisor $h$ of $3g$ such that $dk=\alpha hd$, for some $\alpha \in \mathbb{Z}$ with $\gcd(\alpha, \frac{3g}{h})=1$. Now we have $h=\frac{k}{\alpha}$ and $k$ is not a multiple of $3$, which imply that $h|g$ and $h$ is not a multiple of $3$. Thus $h\in D_{g,3}$, and so $dk \in \bigcup\limits_{h\in D_{g,3}} G_n(hd)$. Conversely, let $x\in \bigcup\limits_{h\in D_{g,3}} G_n(hd)$. Then there exists $h\in D_{g,3}$ such that $x=\alpha hd$, where $\alpha \in \mathbb{Z}$ and $\gcd(\alpha, \frac{3g}{h})=1$. Note that $\alpha $ and $h$ are not multiples of $3$. Thus $\alpha h \equiv 1$ or $2 \Mod 3$, and so $x\in M_{n,3}^1(d) \cup M_{n,3}^2(d)$.
			\item By definition, we have
			\begin{equation*}
				\begin{split}
					M_{n,3}^0(d)&=\{ dk: 0\leq dk \leq n-1, k=3\alpha \textnormal{ for some } \alpha \in \mathbb{Z}\}\\
					&= \{ 3\alpha d : 1\leq 3 \alpha d \leq n-1 \textnormal{ for some } \alpha \in \mathbb{Z}\}\cup \{ 0\}\\
					&= M_n(3d) \cup \{0\}.
				\end{split} 
			\end{equation*}
		\end{enumerate}
	\end{proof}
	
We now prove that $G_n(d)$ is a disjoint union of $G_{n,3}^1(d)$ and $G_{n,3}^2(d)$.

	\begin{lema}\label{SecLemmaSetEqua} Let $n\equiv 0 \Mod 3$, $d$ divide $\frac{n}{3}$ and $g= \frac{n}{3d}$. Then the following hold:
		\begin{enumerate}[label=(\roman*)]
			\item $G_{n,3}^1(d) \cap G_{n,3}^2(d)=\emptyset$;
			\item $G_n(d)=G_{n,3}^1(d) \cup G_{n,3}^2(d)$;
			\item $M_{n,3}^1(d) = \bigg( \bigcup  \limits_{h\in D_{g,3}^1} G_{n,3}^1(hd) \bigg) \cup \bigg( \bigcup\limits_{h\in D_{g,3}^2} G_{n,3}^2(hd) \bigg)$;
			\item $M_{n,3}^2(d) =\bigg( \bigcup\limits_{h\in D_{g,3}^1} G_{n,3}^2(hd) \bigg) \cup \bigg( \bigcup\limits_{h\in D_{g,3}^2} G_{n,3}^1(hd) \bigg)$.
		\end{enumerate}
	\end{lema}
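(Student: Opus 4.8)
The plan is to establish the four items in the order listed, using item (ii) as an ingredient in the proofs of (iii) and (iv). For (i), I would observe that for a fixed $d\geq 1$ the product $dk$ determines $k$, hence the residue of $k$ modulo $3$; so no element can lie in both $G_{n,3}^1(d)$ and $G_{n,3}^2(d)$, and the intersection is empty. For (ii) the inclusion $\supseteq$ is immediate from the definitions (simply drop the congruence constraint), and for $\subseteq$ I would take $x=dk\in G_n(d)$ and show $k\not\equiv 0\Mod 3$: if $3\mid k$ then $3d\mid x$, while $3d\mid n$ because $d\mid\frac{n}{3}$, so $3d\mid\gcd(x,n)=d$, which is absurd. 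Hence $k\equiv 1$ or $2\Mod 3$ and $x\in G_{n,3}^1(d)\cup G_{n,3}^2(d)$.

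For (iii) and (iv) I would first combine Lemma~\ref{FirstLemmaSetEqua}(i) with item (ii), the latter applied with $d$ replaced by $hd$ for each $h\in D_{g,3}$; this is legitimate since $h\mid g$ gives $hd\mid\frac{n}{3}$, the relevant ``$g$'' then being $g/h$. This yields
$$M_{n,3}^1(d)\cup M_{n,3}^2(d)=\bigcup_{h\in D_{g,3}}\big(G_{n,3}^1(hd)\cup G_{n,3}^2(hd)\big).$$
The decisive observation is that an element of $G_{n,3}^r(hd)$ has the shape $(hd)m=d(hm)$ with $m\equiv r\Mod 3$, so it lies in $M_{n,3}^{s}(d)$ where $s$ is the residue of $hr$ modulo $3$. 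Reading off the products ($1\cdot 1$, $1\cdot 2$, $2\cdot 1$, $2\cdot 2\equiv 1$) shows that for $h\in D_{g,3}^1$ one has $G_{n,3}^1(hd)\subseteq M_{n,3}^1(d)$ and $G_{n,3}^2(hd)\subseteq M_{n,3}^2(d)$, while for $h\in D_{g,3}^2$ the two inclusions swap. Consequently the right-hand side $P$ of (iii) satisfies $P\subseteq M_{n,3}^1(d)$ and the right-hand side $Q$ of (iv) satisfies $Q\subseteq M_{n,3}^2(d)$, and by the displayed identity $P\cup Q=M_{n,3}^1(d)\cup M_{n,3}^2(d)$.

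To finish, I would note that $M_{n,3}^1(d)$ and $M_{n,3}^2(d)$ are disjoint (same reason as in (i)); intersecting the identity $P\cup Q=M_{n,3}^1(d)\cup M_{n,3}^2(d)$ with $M_{n,3}^1(d)$ and using $Q\cap M_{n,3}^1(d)=\emptyset$ together with $P\subseteq M_{n,3}^1(d)$ gives $P=M_{n,3}^1(d)$; symmetrically $Q=M_{n,3}^2(d)$. There is no deep obstacle here — the argument is essentially bookkeeping — but two points deserve care. First, the ``collapse'' of $P\cup Q$ into the two separate equalities genuinely needs both the disjointness of $M_{n,3}^1(d),M_{n,3}^2(d)$ and the fact that each $G_{n,3}^r(hd)$ lands wholly inside one of them, so I would state these explicitly before splitting. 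Second, every invocation of (ii) and of Lemma~\ref{FirstLemmaSetEqua} must be checked against those lemmas' hypotheses, i.e.\ the new divisor $hd$ must indeed divide $\frac{n}{3}$, which is exactly where $h\mid g$ is used.
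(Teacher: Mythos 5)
Your proof is correct and follows essentially the same route as the paper's: both rest on Lemma~\ref{FirstLemmaSetEqua}, on part (ii) applied to the divisors $hd$ (with the check $hd\mid\frac{n}{3}$), and on the observation that the residue of $hr$ modulo $3$ determines which $M_{n,3}^{s}(d)$ contains $G_{n,3}^{r}(hd)$. The only difference is organizational: the paper chases individual elements of $M_{n,3}^1(d)$ through a short case analysis on $h\bmod 3$, whereas you obtain (iii) and (iv) simultaneously by intersecting the union identity with the disjoint sets $M_{n,3}^1(d)$ and $M_{n,3}^2(d)$; both arguments are sound.
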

	\begin{proof}
		\begin{enumerate}[label=(\roman*)]
			\item It is clear from the definitions of $G_{n,3}^1(d)$ and $G_{n,3}^2(d)$ that $G_{n,3}^1(d) \cap G_{n,3}^2(d)=\emptyset$.
			
			\item Since $G_{n,3}^1(d) \subseteq G_n(d)$ and $G_{n,3}^2(d) \subseteq G_n(d)$, we have $G_{n,3}^1(d) \cup G_{n,3}^2(d) \subseteq G_n(d)$. Conversely, let $x\in G_n(d)$. Then $x=d\alpha$ for some $\alpha$ satisfying $\gcd(\alpha, 3g)=1$, and so $\alpha \equiv 1$ or $2 \Mod 3$. Hence $G_n(d) \subseteq G_{n,3}^1(d) \cup G_{n,3}^2(d)$.
			
			\item Let $dk\in M_{n,3}^1(d)$ so that $k\equiv 1\Mod 3$. By Lemma ~\ref{FirstLemmaSetEqua}, there exists $h\in D_{g,3}$ satisfying $dk\in G_n(hd)$. Thus $dk \in G_{n,3}^1(hd)$ or $dk \in G_{n,3}^2(hd)$.\\ 
			\textbf{Case 1.} Assume that $h\equiv 1\Mod 3$. Let, if possible, $dk \in G_{n,3}^2(hd)$, that is, $dk=\alpha hd$ for some $\alpha \equiv 2 \Mod 3$ satisfying $\gcd(\alpha, \frac{3g}{h})=1$. Then we have $k=\alpha h \equiv 2 \Mod 3$, a contradiction. Hence $dk \in G_{n,3}^1(hd)$.
			
			\textbf{Case 2.} Assume that $h\equiv 2\Mod 3$. Proceeding as in Case 1, we get $k\equiv 2 \Mod 3$, a contradiction. Hence $dk \in G_{n,3}^2(hd)$. Thus $M_{n,3}^1(d) \subseteq \bigg( \bigcup\limits_{h\in D_{g,3}^1} G_{n,3}^1(hd) \bigg) \cup \bigg( \bigcup\limits_{h\in D_{g,3}^2} G_{n,3}^2(hd) \bigg)$. 
			
			Conversely, if $\alpha hd \in G_{n,3}^1(hd)$, where $h\in D_{g,3}^1$ and $\alpha \equiv 1 \Mod 3$, we get $\alpha h \equiv 1 \Mod 3$, that is, $\alpha h d \in M_{n,3}^1(d)$. Similarly, $\beta h d \in G_{n,3}^2(hd)$, where $h\in D_{g,3}^2$ and $\beta \equiv 1 \Mod 3$, imply that $ \beta h d \in M_{n,3}^1(d)$. Therefore $\bigg( \bigcup\limits_{h\in D_{g,3}^1} G_{n,3}^1(hd) \bigg) \cup \bigg( \bigcup\limits_{h\in D_{g,3}^2} G_{n,3}^2(hd) \bigg) \subseteq M_{n,3}^1(d)$.
			
			\item The proof of this part is similar to the proof of Part (iii).
		\end{enumerate}
	\end{proof}

The \textit{cyclotomic polynomial} $\Phi_n(x)$ is the monic polynomial whose zeros are the primitive $n^{th}$ roots of unity. That is,
	$$\Phi_n(x)= \prod_{a\in G_n(1)}(x-\omega_n^a).$$ 
	Clearly, the degree of $\Phi_n(x)$ is $\varphi(n)$, where $\varphi$ denotes the Euler $\varphi$-function. It is well known that the cyclotomic polynomial $\Phi_n(x)$ is monic and irreducible in $\mathbb{Z}[x]$.  See \cite{numbertheory} for more details on cyclotomic polynomials.

	The polynomial $\Phi_n(x)$ is irreducible over $\mathbb{Q}(\omega_3)$ if and only if $[\mathbb{Q}(\omega_3,\omega_n) : \mathbb{Q}(\omega_3)]= \varphi(n)$. Also, $ \mathbb{Q}(\omega_n)$ does not contain the number $\omega_3$ if and only if $n\not\equiv 0 \Mod 3$. Thus, if $n\not\equiv 0 \Mod 3$ then $[\mathbb{Q}(\omega_3,\omega_n):\mathbb{Q}(\omega_n) ]=2=[\mathbb{Q}(\omega_3), \mathbb{Q}]$, and therefore 
	$$[\mathbb{Q}(\omega_3,\omega_n) : \mathbb{Q}(\omega_3)]=\frac{[\mathbb{Q}(\omega_3,\omega_n) : \mathbb{Q}(\omega_n)] \times [\mathbb{Q}(\omega_n) : \mathbb{Q}]}{ [\mathbb{Q}(\omega_3) : \mathbb{Q}]}= [\mathbb{Q}(\omega_n) : \mathbb{Q}]= \varphi(n).$$
Further, if $n\equiv 0 \Mod 3$ then	$ \mathbb{Q}(\omega_3,\omega_n)= \mathbb{Q}(\omega_n)$, and so 
$$[\mathbb{Q}(\omega_3,\omega_n) : \mathbb{Q}(\omega_3)] = \frac{[\mathbb{Q}(\omega_3,\omega_n) : \mathbb{Q}]}{[\mathbb{Q}(\omega_3) : \mathbb{Q}]}=\frac{\varphi(n)}{2}.$$
Note that $\mathbb{Q}(\omega_3)=\mathbb{Q}(\omega_6)=\mathbb{Q}(i\sqrt{3})$. Therefore $\Phi_n(x)$ is irreducible over $\mathbb{Q}(\omega_3), \mathbb{Q}(\omega_6)$ or $\mathbb{Q}(i\sqrt{3})$ if and only if $n\not\equiv 0 \Mod 3$.
	
Let $n\equiv 0 \Mod 3$. From Lemma~\ref{SecLemmaSetEqua}, we know that $G_{n}(1)$ is a disjoint union of $G_{n,3}^1(1)$ and $G_{n,3}^2(1)$. Define 
$$\Phi_{n,3}^{1}(x)= \prod_{a\in G_{n,3}^1(1)}(x-\omega_n^a)~~ \textnormal{ and } ~~\Phi_{n,3}^2(x)= \prod_{a\in G_{n,3}^2(1)}(x-\omega_n^a).$$ 
It is clear from the definition that $\Phi_n(x)=\Phi_{n,3}^1(x)\Phi_{n,3}^2(x)$.

	\begin{lema}\label{PolyDiviCyclo} If $n\equiv 0\Mod 3$ then the following hold:
		\begin{enumerate}[label=(\roman*)]
			\item $x^{\frac{n}{3}} -\omega_3= \prod\limits_{h\in D_{\frac{n}{3},3}^1}\Phi_{\frac{n}{h},3}^1(x) \prod\limits_{h\in D_{\frac{n}{3},3}^2} \Phi_{\frac{n}{h},3}^2(x)$; and
			\item $x^{\frac{n}{3}} - \omega_3^2= \prod\limits_{h\in D_{\frac{n}{3},3}^1}\Phi_{\frac{n}{h},3}^2(x) \prod\limits_{h\in D_{\frac{n}{3},3}^2} \Phi_{\frac{n}{h},3}^1(x)$.
		\end{enumerate}
	\end{lema}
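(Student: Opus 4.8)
The plan is to show that each side of the two identities is a monic polynomial of degree $\frac{n}{3}$ and that the two sides have the same (necessarily simple) roots. The key inputs are an explicit description of the roots of $x^{n/3}-\omega_3$ and $x^{n/3}-\omega_3^2$, together with the set partitions of Lemma~\ref{SecLemmaSetEqua}.

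First I would locate the roots of $x^{n/3}-\omega_3$. Since $\omega_n^{n/3}=\omega_3$, for $0\le a\le n-1$ we have $(\omega_n^a)^{n/3}=\omega_n^{an/3}$, and this equals $\omega_3$ exactly when $an/3\equiv n/3\Mod n$, i.e.\ when $a\equiv 1\Mod 3$. There are precisely $\frac{n}{3}$ such residues $a$ in $\{0,1,\dots,n-1\}$, none of them $0$, and they yield $\frac{n}{3}$ distinct values $\omega_n^a$; comparing degrees and leading coefficients gives $x^{n/3}-\omega_3=\prod_{a\in M_{n,3}^1(1)}(x-\omega_n^a)$. In the same way, using $\omega_n^{2n/3}=\omega_3^2$, one gets $x^{n/3}-\omega_3^2=\prod_{a\in M_{n,3}^2(1)}(x-\omega_n^a)$.

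Next I would rewrite the index set. Applying Lemma~\ref{SecLemmaSetEqua}(iii) with $d=1$ and $g=\frac{n}{3}$, the set $M_{n,3}^1(1)$ is the union of the sets $G_{n,3}^1(h)$ over $h\in D_{n/3,3}^1$ and the sets $G_{n,3}^2(h)$ over $h\in D_{n/3,3}^2$. This union is disjoint: the divisors $h$ occurring range without repetition over $D_{n/3,3}^1\cup D_{n/3,3}^2$, and for distinct $h$ the sets $G_n(h)\supseteq G_{n,3}^r(h)$ are disjoint, since every element of $G_n(h)$ has greatest common divisor $h$ with $n$. Hence the product over $M_{n,3}^1(1)$ factors as the corresponding product of the products over these pieces.

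Finally I would match each piece with a cyclotomic-type factor. For $h\mid\frac{n}{3}$, an element of $G_{n,3}^r(h)$ has the form $a=hk$ with $1\le hk\le n-1$, $\gcd(hk,n)=h$ and $k\equiv r\Mod 3$; here $\gcd(hk,n)=h\iff\gcd(k,n/h)=1$ and $1\le hk\le n-1\iff 1\le k\le \frac{n}{h}-1$, so $a\mapsto k$ is a bijection of $G_{n,3}^r(h)$ onto $G_{n/h,3}^r(1)$. Since $\omega_n^{hk}=\omega_{n/h}^{k}$, this gives $\prod_{a\in G_{n,3}^r(h)}(x-\omega_n^a)=\prod_{k\in G_{n/h,3}^r(1)}(x-\omega_{n/h}^k)=\Phi_{n/h,3}^r(x)$. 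Substituting into the factorization from the previous paragraph yields part (i); repeating the entire argument with $M_{n,3}^2(1)$ in place of $M_{n,3}^1(1)$ and Lemma~\ref{SecLemmaSetEqua}(iv) in place of (iii) yields part (ii). There is no genuine obstacle here; the only care needed is bookkeeping — the degree count that makes the root comparison tight, the disjointness of the union, and the range translation $1\le hk\le n-1\iff 1\le k\le\frac{n}{h}-1$ in the bijection step.
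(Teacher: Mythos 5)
Your proof is correct and follows essentially the same route as the paper's: identify the roots of $x^{n/3}-\omega_3$ as $\{\omega_n^a : a\in M_{n,3}^1(1)\}$, split that index set via Lemma~\ref{SecLemmaSetEqua}(iii), and convert each factor $\prod_{a\in G_{n,3}^r(h)}(x-\omega_n^a)$ into $\Phi_{n/h,3}^r(x)$ using $G_{n,3}^r(h)=hG_{n/h,3}^r(1)$ and the fact that $\omega_n^h$ is a primitive $\tfrac{n}{h}$-th root of unity. You merely spell out the disjointness and the degree/root count a bit more explicitly than the paper does.
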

	\begin{proof} 
		\begin{enumerate}[label=(\roman*)]
			\item We have $|M_{n,3}^1(1)|=\frac{n}{3}$ and $\omega_n^a$ is a root of $x^{\frac{n}{3}} - \omega_3 $ for each $a\in M_{n,3}^1(1)$. Therefore						
			\begin{equation*}
				\begin{split}
					x^{\frac{n}{3}} -\omega_3 &= \prod_{a\in M_{n,3}^1(1)} (x-\omega_n^a)\\
					&= \prod_{h\in D_{\frac{n}{3},3}^1} \prod_{a\in G_{n,3}^1(h)}(x-\omega_n^a) \prod_{h\in D_{\frac{n}{3},3}^2} \prod_{a\in G_{n,3}^2(h)}(x-\omega_n^a), \hspace{1cm} \textnormal{using Lemma } ~\ref{SecLemmaSetEqua}\\
					&= \prod_{h\in D_{\frac{n}{3},3}^1} \prod_{a\in hG_{\frac{n}{h},3}^1(1)}(x-\omega_n^a) \prod_{h\in D_{\frac{n}{3},3}^2} \prod_{a\in hG_{\frac{n}{h},3}^2(1)}(x-\omega_n^a)\\
					&= \prod_{h\in D_{\frac{n}{3},3}^1} \prod_{a\in G_{\frac{n}{h},3}^1(1)}(x-(\omega_n^h)^a) \prod_{h\in D_{\frac{n}{3},3}^2} \prod_{a\in G_{\frac{n}{h},3}^2(1)}(x-(\omega_n^h)^a)\\
					&= \prod_{h\in D_{\frac{n}{3},3}^1}\Phi_{\frac{n}{h},3}^1(x) \prod_{h\in D_{\frac{n}{3},3}^2} \Phi_{\frac{n}{h},3}^2(x).
				\end{split} 
			\end{equation*} 
			In the last equality, we have used the fact that $\omega_n^h=\exp(\frac{2\pi i}{n/h})$ is a primitive $\frac{n}{h}$-th root of unity.
\item We have $|M_{n,3}^2(1)|=\frac{n}{3}$ and $\omega_n^a$ is a root of $x^{\frac{n}{3}} -\omega_3^2 $ for each $a\in M_{n,3}^2(1)$. Now the proof is similar to the proof of Part $(i)$.
\end{enumerate}
\end{proof}

	\begin{corollary}\label{NewCycloPolyMonic}
		Let $n\equiv 0\Mod 3$. Then $\Phi_{n,3}^1(x)$ and $\Phi_{n,3}^2(x)$ are monic polynomials in $\mathbb{Z}(\omega_3)[x]$ of degree $\varphi(n)/2$.
	\end{corollary}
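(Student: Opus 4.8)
The plan is to read off both claims directly from Lemma~\ref{PolyDiviCyclo}, using the two displayed factorizations as the engine. First I would prove monicity: in Lemma~\ref{PolyDiviCyclo}(i), the left-hand side $x^{n/3}-\omega_3$ is manifestly monic, and every factor on the right-hand side is of the form $\Phi_{m,3}^1(x)$ or $\Phi_{m,3}^2(x)$ with leading coefficient $1$ (each is a product of linear monic factors $x-\omega_n^a$). Since $1\in D_{n/3,3}^1$, the factor $\Phi_{n,3}^1(x)$ appears explicitly in the product, while all other factors are monic; a monic polynomial that factors as a product of monic polynomials forces each factor to be monic — so $\Phi_{n,3}^1(x)$ is monic. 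The same argument applied to Lemma~\ref{PolyDiviCyclo}(ii), where $1\in D_{n/3,3}^1$ again makes $\Phi_{n,3}^2(x)$ an explicit factor of the monic polynomial $x^{n/3}-\omega_3^2$, gives monicity of $\Phi_{n,3}^2(x)$.

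Next I would establish that the coefficients lie in $\mathbb{Z}(\omega_3)$. Here I proceed by induction on $n/h$ (equivalently, downward induction on the divisor $h$, or induction on $n$ with $3\mid n$). The polynomial $x^{n/3}-\omega_3 \in \mathbb{Z}(\omega_3)[x]$, and by Lemma~\ref{PolyDiviCyclo}(i) it equals $\Phi_{n,3}^1(x)$ times the product of $\Phi_{m,3}^1(x)$ and $\Phi_{m,3}^2(x)$ over all \emph{proper} divisors-induced $m = n/h$ with $h>1$. By the induction hypothesis each of those smaller-index factors is monic with coefficients in $\mathbb{Z}(\omega_3)$, hence so is their product $Q(x)$, which is monic over $\mathbb{Z}(\omega_3)$. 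Then $\Phi_{n,3}^1(x) = (x^{n/3}-\omega_3)/Q(x)$; dividing a polynomial in $\mathbb{Z}(\omega_3)[x]$ by a monic polynomial in $\mathbb{Z}(\omega_3)[x]$ yields a quotient (and remainder) in $\mathbb{Z}(\omega_3)[x]$, and since the division is exact, $\Phi_{n,3}^1(x)\in\mathbb{Z}(\omega_3)[x]$. The analogous computation with Lemma~\ref{PolyDiviCyclo}(ii) and the same inductive bookkeeping gives $\Phi_{n,3}^2(x)\in\mathbb{Z}(\omega_3)[x]$. The base case is the smallest $n$ divisible by $3$ relative to the fixed divisor structure, where the product on the right collapses to the single factor equal to $x^{n/3}-\omega_3$ (resp. $x^{n/3}-\omega_3^2$), which visibly lies in $\mathbb{Z}(\omega_3)[x]$.

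Finally, the degree claim is immediate: $\Phi_{n,3}^1(x)$ has degree $|G_{n,3}^1(1)|$ and $\Phi_{n,3}^2(x)$ has degree $|G_{n,3}^2(1)|$ by definition, and by Lemma~\ref{SecLemmaSetEqua}(i),(ii) we have $|G_{n,3}^1(1)|+|G_{n,3}^2(1)| = |G_n(1)| = \varphi(n)$ with the union disjoint. It remains to see the two parts are equal in size; this follows from the observation that multiplication by $\omega_3$-type twists — or more concretely the symmetry $a \mapsto$ (suitable representative) — gives a bijection between $G_{n,3}^1(1)$ and $G_{n,3}^2(1)$, which one can also extract from comparing the two identities in Lemma~\ref{PolyDiviCyclo} (the factors $\Phi_{n/h,3}^1$ and $\Phi_{n/h,3}^2$ swap roles, forcing $\deg\Phi_{n,3}^1 = \deg\Phi_{n,3}^2$), so each equals $\varphi(n)/2$.

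The main obstacle I anticipate is the middle step — checking that exact division by a monic $\mathbb{Z}(\omega_3)$-polynomial keeps coefficients in $\mathbb{Z}(\omega_3)$, and organizing the induction so that the product $Q(x)$ of smaller factors is correctly indexed (all $h>1$ in $D_{n/3,3}^1$ paired with $\Phi^1$ and all $h$ in $D_{n/3,3}^2$ paired with $\Phi^2$, for part (i)). Everything else is bookkeeping; the key non-formal input is that $\mathbb{Z}(\omega_3)$ is a ring (indeed the ring of Eisenstein integers) in which monic polynomial division behaves integrally, exactly as $\mathbb{Z}[x]$ does for ordinary cyclotomic polynomials.
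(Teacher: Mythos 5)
Your proposal is correct and follows essentially the same route as the paper: monicity is immediate from the definition as a product of factors $x-\omega_n^a$, the degree follows from the disjoint union $G_n(1)=G_{n,3}^1(1)\cup G_{n,3}^2(1)$ with the two halves of equal size, and membership in $\mathbb{Z}(\omega_3)[x]$ is proved by induction on $n$ using Lemma~\ref{PolyDiviCyclo} together with exact long division by a monic polynomial over $\mathbb{Z}(\omega_3)$. The only places you elaborate beyond the paper (the monic-product argument, and the justification that $|G_{n,3}^1(1)|=|G_{n,3}^2(1)|$, e.g.\ via the bijection $k\mapsto n-k$) are harmless additions to steps the paper treats as immediate.
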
 
	\begin{proof}
		By definition, $\Phi_{n,3}^1(x)$ and $\Phi_{n,3}^2(x)$ are monic polynomials. Also, $G_n(1)=G_{n,3}^1(1)\cup G_{n,3}^2(1)$ is a disjoint union and that $|G_{n,3}^1(1)|=|G_{n,3}^2(1)|$. Therefore the polynomials $\Phi_{n,3}^1(x)$ and $\Phi_{n,3}^2(x)$ are of degree $\frac{\varphi(n)}{2}$. Now apply induction on $n$ to show that $\Phi_{n,3}^1(x), \Phi_{n,3}^2(x)\in \mathbb{Z}(\omega_3)[x]$. For $n=3$, the polynomials $\Phi_{3,3}^1(x)= x-\omega_3$ and $\Phi_{3,3}^2(x)=x-\omega_3^2$ are clearly in $\mathbb{Z}(\omega_3)[x]$. Assume  that $\Phi_{k,3}^1(x)$ and $\Phi_{k,3}^2(x)$ are in $\mathbb{Z}(\omega_3)[x]$ for each $k<n$ and $k\equiv 0\Mod 3$. By Lemma~\ref{PolyDiviCyclo}, $\Phi_{n,3}^1(x) = \frac{x^{\frac{n}{3}} -\omega_3}{f(x)}$ and $\Phi_{n,3}^2(x) = \frac{x^{\frac{n}{3}} -\omega_3^2}{g(x)}$. By induction hypothesis, $f(x)$ and $g(x)$ are monic polynomials in $\mathbb{Z}(\omega_3)[x]$. It follows by ``long division'' that $\Phi_{n,3}^1(x)\in \mathbb{Z}(\omega_3)[x]$ and $\Phi_{n,3}^2(x)\in \mathbb{Z}(\omega_3)[x]$.
	\end{proof}

	\begin{theorem}
		Let $n\equiv 0\Mod 3$. Then $\Phi_{n,3}^1(x)$ and $\Phi_{n,3}^2(x)$ are irreducible monic polynomials in $\mathbb{Q}(\omega_3)[x]$ of degree $\frac{\varphi(n)}{2}$.
	\end{theorem}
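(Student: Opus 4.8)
The plan is to prove that $\Phi_{n,3}^1(x)$ is exactly the minimal polynomial of $\omega_n$ over $\mathbb{Q}(\omega_3)$, and then transfer the conclusion to $\Phi_{n,3}^2(x)$ by means of a field automorphism. By Corollary~\ref{NewCycloPolyMonic} the two polynomials are already known to be monic of degree $\varphi(n)/2$ with coefficients in $\mathbb{Z}(\omega_3)\subseteq\mathbb{Q}(\omega_3)$, so the only thing left to establish is irreducibility over $\mathbb{Q}(\omega_3)$.

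First I would recall from Section~\ref{prel} that, since $n\equiv 0\Mod 3$, we have $\mathbb{Q}(\omega_3,\omega_n)=\mathbb{Q}(\omega_n)$ and $[\mathbb{Q}(\omega_n):\mathbb{Q}(\omega_3)]=\varphi(n)/2$; in particular the minimal polynomial of $\omega_n$ over $\mathbb{Q}(\omega_3)$ has degree $\varphi(n)/2$. Next I would pin down the subgroup $\mathrm{Gal}(\mathbb{Q}(\omega_n)/\mathbb{Q}(\omega_3))$ inside $\mathrm{Gal}(\mathbb{Q}(\omega_n)/\mathbb{Q})\cong(\mathbb{Z}/n\mathbb{Z})^{\times}$: writing $\sigma_a$ for the automorphism $\omega_n\mapsto\omega_n^a$ with $\gcd(a,n)=1$, and using $\omega_3=\omega_n^{n/3}$, we get $\sigma_a(\omega_3)=\omega_n^{an/3}=\omega_3^{a}$, which equals $\omega_3$ precisely when $a\equiv 1\Mod 3$. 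Hence $\mathrm{Gal}(\mathbb{Q}(\omega_n)/\mathbb{Q}(\omega_3))=\{\sigma_a:\gcd(a,n)=1,\ a\equiv 1\Mod 3\}$, and so the Galois conjugates of $\omega_n$ over $\mathbb{Q}(\omega_3)$ are exactly the numbers $\omega_n^a$ with $a\in G_{n,3}^1(1)$. Therefore the minimal polynomial of $\omega_n$ over $\mathbb{Q}(\omega_3)$ is $\prod_{a\in G_{n,3}^1(1)}(x-\omega_n^a)=\Phi_{n,3}^1(x)$, which shows that $\Phi_{n,3}^1(x)$ is irreducible over $\mathbb{Q}(\omega_3)$.

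For $\Phi_{n,3}^2(x)$ I would invoke complex conjugation, which is an automorphism of $\mathbb{C}$ whose restriction to $\mathbb{Q}(\omega_3)$ is the nontrivial automorphism $\omega_3\mapsto\omega_3^2$ and which sends $\omega_n\mapsto\omega_n^{-1}=\omega_n^{n-1}$. Applying it coefficientwise to $\Phi_{n,3}^1(x)=\prod_{a\in G_{n,3}^1(1)}(x-\omega_n^a)$ produces $\prod_{a\in G_{n,3}^1(1)}(x-\omega_n^{n-a})$; since $a\mapsto n-a$ is a bijection of $G_{n,3}^1(1)$ onto $G_{n,3}^2(1)$ (it preserves the gcd with $n$ and carries residue $1$ to residue $2$ modulo $3$), this product equals $\Phi_{n,3}^2(x)$. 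Because an irreducible polynomial stays irreducible after applying a field automorphism to its coefficients, $\Phi_{n,3}^2(x)$ is irreducible over $\mathbb{Q}(\omega_3)$ as well. (Equivalently, one could rerun the Galois argument starting from $\omega_n^{n-1}$, whose conjugates over $\mathbb{Q}(\omega_3)$ are the $\omega_n^{a(n-1)}$ with $a\equiv 1\Mod 3$, which are precisely the $\omega_n^c$ with $c\in G_{n,3}^2(1)$.)

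The one delicate point is the identification of $\mathrm{Gal}(\mathbb{Q}(\omega_n)/\mathbb{Q}(\omega_3))$ with the ``residue $1$ mod $3$'' subgroup of $(\mathbb{Z}/n\mathbb{Z})^{\times}$; granting that, matching the conjugate sets with $G_{n,3}^1(1)$ and $G_{n,3}^2(1)$ is just unwinding the definitions, and the degree equality $\varphi(n)/2$ is exactly what guarantees we have captured the entire minimal polynomial and not a proper factor. Beyond careful bookkeeping of residues modulo $3$, I do not expect a genuine obstacle.
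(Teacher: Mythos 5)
Your proof is correct, and it reaches the same destination as the paper --- identifying $\Phi_{n,3}^1(x)$ with the minimal polynomial of $\omega_n$ over $\mathbb{Q}(\omega_3)$ --- but by a somewhat different mechanism. The paper argues purely by a degree count: since $[\mathbb{Q}(\omega_3,\omega_n):\mathbb{Q}(\omega_3)]=\varphi(n)/2$, the minimal polynomial $p(x)$ of $\omega_n$ over $\mathbb{Q}(\omega_3)$ has degree $\varphi(n)/2$ and divides $\Phi_{n,3}^1(x)$ (which lies in $\mathbb{Q}(\omega_3)[x]$ by Corollary~\ref{NewCycloPolyMonic} and has $\omega_n$ as a root); equality of degrees of two monic polynomials forces $\Phi_{n,3}^1(x)=p(x)$. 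You instead compute $\mathrm{Gal}(\mathbb{Q}(\omega_n)/\mathbb{Q}(\omega_3))$ explicitly as the residue-$1$-mod-$3$ subgroup of $(\mathbb{Z}/n\mathbb{Z})^{\times}$ and observe that the orbit of $\omega_n$ is exactly $\{\omega_n^a: a\in G_{n,3}^1(1)\}$; this is more work but more informative, since it exhibits the roots of $\Phi_{n,3}^1(x)$ as a full conjugacy class rather than relying on a cardinality coincidence, and as a byproduct it re-derives the rationality of the coefficients without invoking Corollary~\ref{NewCycloPolyMonic}. For the second polynomial the two arguments genuinely diverge: the paper simply reruns the degree count with $\omega_n^a$ for $a\in G_{n,3}^2(1)$, whereas you transport irreducibility via complex conjugation, using that $a\mapsto n-a$ carries $G_{n,3}^1(1)$ bijectively onto $G_{n,3}^2(1)$ (which is valid: $n\equiv 0\Mod 3$ gives $n-a\equiv 2\Mod 3$, and the gcd is preserved). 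Both routes are sound; yours trades a second application of the degree argument for a symmetry, and the Galois bookkeeping you flag as the delicate point checks out since $\sigma_a(\omega_3)=\omega_3^a$ indeed fixes $\omega_3$ exactly when $a\equiv 1\Mod 3$.
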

	\begin{proof}
		In view of Corollary~\ref{NewCycloPolyMonic}, we only need to show the irreducibility of $\Phi_{n,3}^1(x)$ and $\Phi_{n,3}^2(x)$ in $\mathbb{Q}(\omega_3)$. For $n\equiv 0\Mod 3$, we have $[\mathbb{Q}(\omega_3,\omega_n) : \mathbb{Q}(\omega_3)] = \frac{\varphi(n)}{2}$. So there is a unique irreducible monic polynomial $p(x)\in \mathbb{Q}(\omega_3)[x]$ of degree $\frac{\varphi(n)}{2}$ having $\omega_n$ as a root. Since $\omega_n$ is also a root of $\Phi_{n,3}^1(x)$, we have that $\Phi_{n,3}^1(x)=p(x)f(x)$ for some $f(x)\in \mathbb{Q}(\omega_3)[x]$. Also $\Phi_{n,3}^1(x)$ is a monic polynomial of degree $\varphi(n)/2$, and so $f(x)=1$. Hence $\Phi_{n,3}^1(x)=p(x)$ is irreducible. Similarly, $[\mathbb{Q}(\omega_3,\omega_n^a) : \mathbb{Q}(\omega_3)] = \frac{\varphi(n)}{2}$ for $a\in G_{n,3}^2(1)$. This, along with Corollary~\ref{NewCycloPolyMonic}, give that $\Phi_{n,3}^2(x)$ is irreducible.
	\end{proof}

%%%%%%%%%%%%%%%%%%%%%%%%%%%%%%%%%%%%%%%%%%%%%%%%%%%%%%%%%%%%%%%%
\section{A sufficient condition for the HS-integrality of oriented circulant graphs}\label{sufficient}
	
 In this section, we obtain a sufficient condition for the HS-integrality of oriented circulant graphs on $n$ vertices. Throughout this section, we consider $S$ to be a skew-symmetric subset of $\mathbb{Z}_n$.
	
	\begin{lema}\label{Sqrt3ZeroSum} Let $S$ be a skew-symmetric subset of $\mathbb{Z}_n$. If $\sum\limits_{k\in S} i\sqrt{3}(\omega_n^{jk}-\omega_n^{-jk}) =0$ for each \linebreak[4] $j\in \{0,1,...,n-1\}$ then $S=\emptyset$.
	\end{lema}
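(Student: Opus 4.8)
The plan is to cancel the nonzero factor $i\sqrt{3}$ and reinterpret the hypothesis as a single polynomial vanishing at every $n$-th root of unity. First I would reindex the second sum: writing $-S:=\{-k \Mod n : k\in S\}$ and substituting $k\mapsto -k$ gives $\sum_{k\in S}\omega_n^{-jk}=\sum_{k\in -S}\omega_n^{jk}$, so the hypothesis is equivalent to
\[
\sum_{k\in S}\omega_n^{jk}=\sum_{k\in -S}\omega_n^{jk}\qquad\text{for all } j\in\{0,1,\dots,n-1\}.
\]

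Next I would introduce the polynomial $p(x):=\sum_{k\in S}x^{k}-\sum_{k\in -S}x^{k}\in\mathbb{Z}[x]$. Since $0\notin S$, both $S$ and $-S$ are subsets of $\{1,\dots,n-1\}$, hence $\deg p\le n-1$. The displayed identity says exactly that $p(\omega_n^{j})=0$ for each $j\in\{0,1,\dots,n-1\}$; these are $n$ pairwise distinct complex numbers, and a polynomial of degree at most $n-1$ with $n$ distinct roots must be identically zero, so $p\equiv 0$.

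Finally I would read off the coefficients of $p$. Because $S$ is skew-symmetric, $S\cap(-S)=\emptyset$, so the coefficient of $x^{k}$ in $p$ is $1$ when $k\in S$, is $-1$ when $k\in -S$, and is $0$ otherwise, with no overlap and hence no cancellation. Therefore $p\equiv 0$ forces $S=\emptyset$, as claimed.

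The only step needing a little care — and the closest thing to an obstacle — is this last non-cancellation point: it is precisely the skew-symmetry hypothesis that makes the $+1$ coefficients (coming from $S$) and the $-1$ coefficients (coming from $-S$) sit on disjoint exponents, so that $p\equiv 0$ cannot arise from overlap. Equivalently, one may phrase the argument through injectivity of the discrete Fourier transform on $\mathbb{Z}_n$: the hypothesis reads $\widehat{\chi_S}=\widehat{\chi_{-S}}$, hence $\chi_S=\chi_{-S}$, i.e.\ $S=-S$, which combined with $S\cap(-S)=\emptyset$ again yields $S=\emptyset$.
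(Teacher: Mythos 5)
Your proof is correct. It differs from the paper's in packaging rather than in substance: the paper builds the auxiliary circulant matrix $A_S$ whose $(u,v)$ entry is $i\sqrt{3}$, $-i\sqrt{3}$, or $0$ according as $v-u\in S$, $v-u\in -S$, or neither, observes that its eigenvalues are exactly the sums $\alpha_j=\sum_{k\in S}i\sqrt{3}(\omega_n^{jk}-\omega_n^{-jk})$, and concludes that a circulant (hence diagonalizable) matrix with all eigenvalues zero is the zero matrix, forcing $S=\emptyset$. You instead cancel $i\sqrt 3$ and argue at the level of the symbol of that matrix: the polynomial $p(x)=\sum_{k\in S}x^k-\sum_{k\in -S}x^k$ has degree at most $n-1$ and vanishes at all $n$ distinct $n$-th roots of unity, hence is identically zero. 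Both arguments are instances of the same fact, the invertibility of the Fourier matrix $F$, so neither is more general; but your version is slightly more self-contained (no appeal to diagonalizability of circulants), and it has the merit of making explicit the one point the paper leaves implicit, namely that skew-symmetry gives $S\cap(-S)=\emptyset$, so the $+1$ and $-1$ coefficients of $p$ live on disjoint exponents and cannot cancel. (In the paper this same disjointness is what makes the entries of $A_S$ well defined.) Your use of $0\notin S$ to keep all exponents in $\{1,\dots,n-1\}$ is also the right care to take, and is guaranteed by the paper's standing convention that connection sets exclude the identity.
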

	\begin{proof} Let $A_S=(a_{uv})_{n\times n}$ be the matrix whose rows and columns are indexed by elements of $\mathbb{Z}_n$, where 
	$$a_{uv} = \left\{ \begin{array}{rl}
		i\sqrt{3} & \mbox{ if } v-u \in S \\
		-i\sqrt{3} & \mbox{ if } v-u \in S^{-1}\\
		0 &\textnormal{ otherwise.}
	\end{array}\right.$$ 
Observe that $A_S$ is a circulant matrix. Therefore the spectrum of $A_S$ is $\{ \alpha_j : j=0,1,...,n-1 \}$, where $\alpha_j= \sum\limits_{k\in S} i\sqrt{3}(\omega_n^{jk}-\omega_n^{-jk})$. Given that $\alpha_j=0$ for all $j=0,1,...,n-1$. This implies that all the entries of $A_S$ are zero, and hence $S=\emptyset$.
	\end{proof}

\begin{theorem}\label{CharaOfOrieCayGraphNot3}
		Let $n\not\equiv 0 \Mod 3$. Then the oriented circulant graph $\text{Circ}(\mathbb{Z}_n,S)$ is HS-integral if and only if $S=\emptyset$.
	\end{theorem}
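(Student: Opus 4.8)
The plan is to prove the nontrivial direction: if $n\not\equiv 0\Mod 3$ and the oriented circulant graph $\text{Circ}(\mathbb{Z}_n,S)$ is HS-integral, then $S=\emptyset$. The converse is trivial, since the empty graph has all HS-eigenvalues equal to $0$. By Lemma~\ref{SpecMixCayGraph}, since $S$ is skew-symmetric we have $S\setminus\overline S=\emptyset$ and $\overline S=S$, so the HS-eigenvalues are $\gamma_j=\mu_j=\sum_{k\in S}(\omega_6\omega_n^{jk}+\omega_6^5\omega_n^{-jk})$ for $j\in\{0,1,\dots,n-1\}$. Writing $\omega_6=\tfrac{1+i\sqrt3}{2}$ and $\omega_6^5=\tfrac{1-i\sqrt3}{2}$, one computes $\gamma_j=\tfrac12\sum_{k\in S}(\omega_n^{jk}+\omega_n^{-jk})+\tfrac{i\sqrt3}{2}\sum_{k\in S}(\omega_n^{jk}-\omega_n^{-jk})$, i.e. $\gamma_j=\operatorname{Re}$-part plus $\tfrac12\beta_j$ where $\beta_j:=\sum_{k\in S}i\sqrt3(\omega_n^{jk}-\omega_n^{-jk})$ is exactly the quantity appearing in Lemma~\ref{Sqrt3ZeroSum}.

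The key step is a Galois/field-theory argument. Suppose $\gamma_j\in\mathbb{Z}$ for all $j$. Fix $j$ and consider the Galois group $\operatorname{Gal}(\mathbb{Q}(\omega_3,\omega_n)/\mathbb{Q})$. Since $n\not\equiv 0\Mod 3$, the extension $\mathbb{Q}(\omega_3,\omega_n)/\mathbb{Q}(\omega_n)$ has degree $2$, as established in the excerpt, so there is an automorphism $\sigma$ fixing $\mathbb{Q}(\omega_n)$ pointwise and sending $\omega_3\mapsto\omega_3^2$, hence $i\sqrt3=\omega_3-\omega_3^2\mapsto -i\sqrt3$ (equivalently $\omega_6\mapsto\omega_6^5$). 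Applying $\sigma$ to $\gamma_j$ fixes each $\omega_n^{jk}$ but swaps $\omega_6\leftrightarrow\omega_6^5$, so $\sigma(\gamma_j)=\sum_{k\in S}(\omega_6^5\omega_n^{jk}+\omega_6\omega_n^{-jk})=\overline{\gamma_j}$ in the relevant sense; on the other hand $\gamma_j\in\mathbb{Z}$ forces $\sigma(\gamma_j)=\gamma_j$. Subtracting, $\gamma_j-\sigma(\gamma_j)=(\omega_6-\omega_6^5)\sum_{k\in S}(\omega_n^{jk}-\omega_n^{-jk})=i\sqrt3\sum_{k\in S}(\omega_n^{jk}-\omega_n^{-jk})=\beta_j=0$. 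Since this holds for every $j\in\{0,1,\dots,n-1\}$, Lemma~\ref{Sqrt3ZeroSum} applies directly and yields $S=\emptyset$.

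I expect the main obstacle — really the only point requiring care — to be the bookkeeping around the automorphism $\sigma$: one must make sure that $\sigma$ genuinely fixes $\mathbb{Q}(\omega_n)$ while acting nontrivially on $\omega_3$, which relies precisely on $n\not\equiv 0\Mod 3$ so that $\omega_3\notin\mathbb{Q}(\omega_n)$ and $[\mathbb{Q}(\omega_3,\omega_n):\mathbb{Q}(\omega_n)]=2$ (both noted in Section~\ref{prel}). Once $\sigma$ is in hand, the computation that $\sigma$ swaps $\omega_6$ and $\omega_6^5$ is immediate from $\omega_6=-\omega_3^2$, $\omega_6^5=-\omega_3$, and everything reduces to Lemma~\ref{Sqrt3ZeroSum}. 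An alternative, essentially equivalent, route avoids Galois theory: since $\gamma_j\in\mathbb{Q}\subseteq\mathbb{R}$ for each $j$, taking imaginary parts gives $\operatorname{Im}(\gamma_j)=\tfrac{\sqrt3}{2}\sum_{k\in S}(\omega_n^{jk}-\omega_n^{-jk})/i\cdot i=0$ — more carefully, $\gamma_j=\tfrac12\sum_{k\in S}(\omega_n^{jk}+\omega_n^{-jk})+\tfrac12\beta_j$ with the first sum real and $\beta_j$ purely imaginary, so $\gamma_j\in\mathbb{R}$ already forces $\beta_j=0$ for all $j$, and Lemma~\ref{Sqrt3ZeroSum} finishes. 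I would present the real/imaginary-part version as the clean argument and remark on the Galois interpretation, since it generalizes better to the $n\equiv 0\Mod3$ case treated later.
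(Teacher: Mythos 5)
Your main (Galois) argument is correct and is essentially the paper's proof seen from the other side of the Galois group. The paper fixes $\mathbb{Q}(\omega_6)$ and uses the irreducibility of $\Phi_n(x)$ over $\mathbb{Q}(\omega_6)$ for $n\not\equiv 0\Mod 3$ to move $\omega_n\mapsto\omega_n^{-1}$, concluding $\mu_j=\mu_{n-j}$; you instead fix $\mathbb{Q}(\omega_n)$ and use $[\mathbb{Q}(\omega_3,\omega_n):\mathbb{Q}(\omega_n)]=2$ to move $\omega_3\mapsto\omega_3^2$. Since $\sigma(\mu_j)=\mu_{n-j}$, the two automorphisms produce the identical identity $\sum_{k\in S}i\sqrt3(\omega_n^{jk}-\omega_n^{-jk})=0$, and both then invoke Lemma~\ref{Sqrt3ZeroSum}. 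Your version is arguably slightly cleaner in that it only needs the degree-$2$ step rather than irreducibility of the full cyclotomic polynomial over $\mathbb{Q}(\omega_6)$, but the content and the point where $n\not\equiv 0\Mod 3$ enters are the same.

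However, the ``alternative, essentially equivalent route'' via real and imaginary parts, which you say you would present as the clean argument, is wrong, and if you wrote the proof that way it would have a fatal gap. The quantity
\[
\beta_j=\sum_{k\in S}i\sqrt3\left(\omega_n^{jk}-\omega_n^{-jk}\right)=i\sqrt3\sum_{k\in S}2i\sin\!\left(\tfrac{2\pi jk}{n}\right)=-2\sqrt3\sum_{k\in S}\sin\!\left(\tfrac{2\pi jk}{n}\right)
\]
is \emph{real}, not purely imaginary, so in your decomposition $\gamma_j=\tfrac12\sum_{k\in S}(\omega_n^{jk}+\omega_n^{-jk})+\tfrac12\beta_j$ both summands are real and $\gamma_j\in\mathbb{R}$ holds for \emph{every} skew-symmetric $S$ --- as it must, since $\mathcal{H}$ is a Hermitian matrix and all its eigenvalues are automatically real. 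Taking imaginary parts therefore yields $0=0$ and gives no information; realness of $\gamma_j$ does not force $\beta_j=0$, and this shortcut cannot distinguish $n\equiv 0\Mod 3$ from $n\not\equiv 0\Mod 3$ (it would ``prove'' $S=\emptyset$ in all cases, which is false). The arithmetic input --- rationality of $\gamma_j$ together with the field-degree computation --- is genuinely needed, so you should keep the Galois (or the paper's polynomial-divisibility) argument and drop the imaginary-part version.
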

	\begin{proof}
Let $G=\text{Circ}(\mathbb{Z}_n,S)$ and $Sp_H(G)=\{\mu_0,\mu_1,...,\mu_{n-1} \}$. Assume that $G$ is HS-integral. By Lemma~\ref{SpecMixCayGraph}, 
$$\mu_j=  \sum\limits_{k\in {S}} (\omega_6\omega_n^{jk} +  \omega_6^5 \omega_n^{-jk})\in \mathbb{Z} \textnormal{ for } 0\leq j\leq n-1.$$ 
Observe that $\omega_n$ is a root of the polynomial  $p(x)= \sum\limits_{k\in {S}}( \omega_6x^{jk} + \omega_6^5x^{j(n-k)})-\mu_j \in \mathbb{Q}(\omega_6)[x]$. Since $n\not\equiv 0 \Mod 3$, the polynomial $\Phi_n(x)$ is irreducible in $\mathbb{Q}(\omega_6)[x]$. Therefore $p(x)$ is a multiple of the irreducible polynomial $\Phi_n(x)$, and so $\omega_n^{-1}=\omega_n^{n-1}$ is also a root of $p(x)$, that is, $\mu_j=\mu_{n-j}$. Thus
			\begin{align*}
			0=\mu_j - \mu_{n-j}= \sum_{k \in S} (\omega_6 - \omega_6^5) \omega_n^{kj} + (\omega_6^5 - \omega_6)\omega_n^{-kj}
			=  \sum_{k \in S} i\sqrt{3}(\omega_n^{kj} -\omega_n^{-kj}).
			\end{align*} 
Therefore by Lemma \ref{Sqrt3ZeroSum}, $S=\emptyset$. Conversely, if $S=\emptyset$ then $\text{Circ}(\mathbb{Z}_n,S)$ has no edges, and hence all its eigenvalues are zero. 
	\end{proof}

Theorem~\ref{CharaOfOrieCayGraphNot3} characterizes integral oriented circulant graphs for the case $n\not\equiv 0 \Mod 3$.

\begin{lema}\label{CoroNewResSecKindHerInt}
Let $S$ be a skew-symmetric subset of $\mathbb{Z}_n$ and $k\in \mathbb{N}$. Then 
$$\sum\limits_{q\in S} \omega_k\omega_n^{jq} + \sum\limits_{q\in S^{-1}} \omega_k^{k-1}\omega_n^{jq} \in \mathbb{Z} \textnormal{ for each } j\in \{0,1,...,n-1\}$$ 
if and only if 
$$\sum\limits_{q\in S} \omega_{k}^{k-1}\omega_n^{jq} + \sum\limits_{q\in S^{-1}} \omega_k\omega_n^{jq} \in \mathbb{Z} \textnormal{ for each } j\in \{0,1,...,n-1\}.$$
\end{lema}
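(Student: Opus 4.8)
The plan is to show that the second displayed sum, viewed as a function of $j$, is nothing but the first sum evaluated at $n-j$; since $j\mapsto n-j$ merely permutes the index set $\{0,1,\dots,n-1\}$ modulo $n$, the two ``for all $j$'' conditions will turn out to be literally the same condition.

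First I would record the elementary identity $\omega_k^{k-1}=\omega_k^{-1}$ (valid because $\omega_k^{k}=1$) and abbreviate
$$f(j)=\sum_{q\in S}\omega_k\omega_n^{jq}+\sum_{q\in S^{-1}}\omega_k^{-1}\omega_n^{jq},\qquad g(j)=\sum_{q\in S}\omega_k^{-1}\omega_n^{jq}+\sum_{q\in S^{-1}}\omega_k\omega_n^{jq},$$
so that the assertion to be proved reads: $f(j)\in\Zl$ for all $j$ if and only if $g(j)\in\Zl$ for all $j$.

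Next I would compute $f(n-j)$. Since $\omega_n^{nq}=1$ we have $\omega_n^{(n-j)q}=\omega_n^{-jq}$, hence
$$f(n-j)=\sum_{q\in S}\omega_k\omega_n^{-jq}+\sum_{q\in S^{-1}}\omega_k^{-1}\omega_n^{-jq}.$$
Now I would reindex each of these two sums by the substitution $q\mapsto -q$ (reduction $\bmod\,n$). In $\Zl_n$ the map $q\mapsto -q$ is a bijection taking $S$ onto $S^{-1}$ and $S^{-1}$ onto $S$, and it turns $\omega_n^{-jq}$ into $\omega_n^{jp}$; thus the first sum becomes $\sum_{p\in S^{-1}}\omega_k\omega_n^{jp}$ and the second becomes $\sum_{p\in S}\omega_k^{-1}\omega_n^{jp}$. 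Collecting terms gives $f(n-j)=g(j)$ for every $j$. Finally, as $j$ runs over $\{0,1,\dots,n-1\}$ so does $n-j\bmod n$, so $\{f(j):0\le j\le n-1\}=\{g(j):0\le j\le n-1\}$ as multisets; in particular every $f(j)$ is an integer exactly when every $g(j)$ is, which is the claim. (A cosmetic alternative: one can instead note $\overline{g(j)}=f(n-j)$ together with the fact that these Hermitian-type sums are real, reaching the same relation $g(j)=f(n-j)$; I would use the direct substitution to avoid invoking reality.)

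I do not expect a genuine obstacle here — the argument is essentially bookkeeping. The only point that deserves care is the reindexing step: being explicit that ``$S^{-1}$'' means $\{-q\bmod n:q\in S\}$, that negation swaps $S$ and $S^{-1}$, and that the substitution is applied correctly to both summations. (Skew-symmetry of $S$ is not really used beyond this; it only provides the extra fact $S\cap S^{-1}=\emptyset$, which plays no role in the proof.)
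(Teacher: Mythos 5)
Your argument is correct: the identity $f(n-j)=g(j)$, obtained from $\omega_n^{(n-j)q}=\omega_n^{-jq}$ followed by the reindexing $q\mapsto -q$ (which interchanges $S$ and $S^{-1}$), does show that the two families of sums coincide as multisets, so the two integrality conditions are the same. The paper reaches the same conclusion by a slightly different packaging: it introduces the circulant matrix $H_S$ whose eigenvalues are the first family of sums, observes that the entrywise conjugate satisfies $\overline{H}_S=H_{S^{-1}}$, whose eigenvalues are the second family, and then uses the fact that a complex number is an integer if and only if its conjugate is. The two routes are really the same symmetry in disguise (conjugating $\omega_n^{jq}$ is the same as replacing $j$ by $n-j$), but yours is more elementary and self-contained, whereas the paper's matrix formulation needs $S\cap S^{-1}=\emptyset$ just to make $H_S$ well defined; your remark that skew-symmetry is not actually used is therefore accurate and even slightly sharpens the lemma. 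One tiny point of care, which you already handle by reducing mod $n$: for $j=0$ the index $n-j=n$ falls outside $\{0,\dots,n-1\}$, but $f(n)=f(0)$ since $\omega_n^{nq}=1$, so the permutation claim stands.
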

\begin{proof}
Let $H_S=[h_{uv}]_{n\times n}$ be the matrix, whose rows and columns are indexed by elements of $\mathbb{Z}_n$, where 
	$$h_{uv} = \left\{ \begin{array}{rl}
		\omega_k & \mbox{ if } v-u \in S\\
		\omega_k^{k-1} & \mbox{ if } v-u \in S^{-1}\\
		0 &\textnormal{ otherwise.}
	\end{array}\right.$$ 
 Since $H_S$ is a circulant matrix, $\sum\limits_{q\in S} \omega_{k}\omega_n^{jq} + \sum\limits_{q\in S^{-1}} \omega_k^{k-1}\omega_n^{jq}$ is an eigenvalue of $H_S$ for all $j\in \{0,1,...,n-1 \}$. Let $\overline{H}_S$ be obtained by taking the complex conjugate of the corresponding entry of $H_S$. Note that $\overline{H}_S= H_{S^{-1}}$. The result follows from the fact that the eigenvalues of $H_S$ are integers if and only if the eigenvalues of $\overline{H}_S$ are integers.
\end{proof}

\begin{lema}\label{NewTooSmallLemma} Let $n\equiv 0 \Mod 3$, $d$ divide $\frac{n}{3}$ and $g= \frac{n}{3d}$. Then $ \sum\limits_{q\in M_{n,3}^0(d)} \omega_n^{jq}\in \mathbb{Z} $ for $j\in \{0,1,...,n-1\}$. 
	\end{lema}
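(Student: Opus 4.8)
The plan is to reduce the exponential sum over $M_{n,3}^0(d)$ to a geometric series by peeling off the zero term and rescaling the roots of unity. First I would invoke Lemma~\ref{FirstLemmaSetEqua}(ii), which tells us that $M_{n,3}^0(d) = M_n(3d) \cup \{0\}$, and this union is disjoint since every element of $M_n(3d)$ is positive. Hence
\[
\sum_{q\in M_{n,3}^0(d)} \omega_n^{jq} = 1 + \sum_{q\in M_n(3d)} \omega_n^{jq},
\]
so it suffices to show the second summand is an integer for every $j\in\{0,1,\ldots,n-1\}$.

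Next I would use the identity $M_n(3d) = 3d\cdot M_{g}(1) = \{3dk : 1\le k \le g-1\}$, together with the observation that $\omega_n^{3d} = \exp\!\big(\tfrac{2\pi i\cdot 3d}{n}\big) = \exp\!\big(\tfrac{2\pi i}{g}\big) = \omega_g$. Substituting, the sum becomes $\sum_{k=1}^{g-1}\omega_g^{jk}$, a partial geometric sum of $g$-th roots of unity. By the standard evaluation $\sum_{k=0}^{g-1}\omega_g^{jk} = g$ when $g\mid j$ and $0$ otherwise, we get $\sum_{k=1}^{g-1}\omega_g^{jk} = g-1$ when $g\mid j$ and $-1$ otherwise. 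In either case this is an integer, and therefore $\sum_{q\in M_{n,3}^0(d)} \omega_n^{jq}$ equals $g$ or $0$, both integers.

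An alternative ending, if one prefers to avoid the explicit geometric-series computation, is to apply Lemma~\ref{BasicProp} with the factorization $n = (3d)\cdot g$ to write $M_n(3d) = \bigcup_{h\mid g} G_n(hd\cdot 3)$ as a disjoint union over the divisor set $\mathscr{D} = \{3dh : h\mid g\}\subseteq\{e : e\mid n\}$; then Theorem~\ref{2006integral} immediately gives that $\text{Circ}(\mathbb{Z}_n, M_n(3d))$ is integral, so $\sum_{q\in M_n(3d)}\omega_n^{jq}\in\mathbb{Z}$ and adding the contribution $1$ from $q=0$ preserves integrality. I do not anticipate any real obstacle here; the only points requiring minor care are the disjointness of $M_n(3d)\cup\{0\}$ and correctly identifying $\omega_n^{3d}$ as a primitive $g$-th root of unity (equivalently, that $3d\cdot M_g(1)$ is the right description of $M_n(3d)$).
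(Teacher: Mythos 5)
Your proposal is correct. Your primary argument is genuinely more elementary than the paper's: after the common first step (Lemma~\ref{FirstLemmaSetEqua}(ii) giving $M_{n,3}^0(d)=M_n(3d)\cup\{0\}$ as a disjoint union), you rescale via $M_n(3d)=3dM_g(1)$ and $\omega_n^{3d}=\omega_g$ to reduce everything to the geometric sum $\sum_{k=1}^{g-1}\omega_g^{jk}$, which you evaluate exactly; this yields the sharper conclusion that the sum equals $g$ when $g\mid j$ and $0$ otherwise, not merely that it is an integer. The paper instead decomposes $M_n(3d)=\bigcup_{h\mid g}G_n(3hd)$ via Lemma~\ref{BasicProp} and cites Theorem~\ref{2006integral} (So's characterization) to conclude each piece $\sum_{q\in G_n(3hd)}\omega_n^{jq}$ is an integer --- which is precisely your ``alternative ending.'' The trade-off: your geometric-series route is self-contained and gives a closed form, while the paper's route fits the surrounding machinery (the same $G_n$-decomposition pattern reappears in Lemmas~\ref{NewSmallLemma} and \ref{SuffiCondMainTheo}) at the cost of invoking a nontrivial external theorem. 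Both versions are sound; the only care points you flag (disjointness of the union with $\{0\}$, and $\omega_n^{3d}$ being a primitive $g$-th root of unity) are handled correctly.
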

	\begin{proof} Using Lemma~\ref{FirstLemmaSetEqua} and Lemma~\ref{BasicProp}, we get $M_{n,3}^0(d)=\bigcup\limits_{h\mid g} G_n(3hd)\bigcup\limits \{0\}$. Therefore
		\begin{equation*}\label{eq1}
			\begin{split}
				\sum\limits_{q\in M_{n,3}^0(d)} \omega_n^{jq} =1+\sum\limits_{h\mid g} \sum\limits_{q\in G_n(3hd)} \omega_n^{jq}.
			\end{split} 
		\end{equation*}
By Theorem \ref{2006integral}, the eigenvalue $ \sum\limits_{q\in G_n(3hd)} \omega_n^{jq}$ of the circulant graph $\text{Circ}(\mathbb{Z}_n, G_n(3hd))$ is an integer for each $h \mid g$. Thus $\sum\limits_{q\in M_n^0(d)} \omega_n^{jq}\in \mathbb{Z}$ for each $j\in \{0,1,...,n-1\}$.
	\end{proof}
	
		\begin{lema}\label{NewSmallLemma}
	Let $n\equiv 0 \Mod 3$, $d$ divide $\frac{n}{3}$ and $g= \frac{n}{3d}$. Then $$\sum\limits_{q\in M_{n,3}^2(d)} \omega_3 \omega_n^{jq} + \sum\limits_{q\in M_{n,3}^1(d)}\omega_3^2 \omega_n^{jq} \in \mathbb{Z} \mbox{ for each } j\in \{0,1,...,n-1\}.$$
	\end{lema}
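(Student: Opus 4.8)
The plan is to reduce each of the three power sums $\sum_{q\in M_{n,3}^r(d)}\omega_n^{jq}$, $r\in\{0,1,2\}$, to a single geometric series, and then to finish by a short direct computation. Writing $n=3dg$, I first record the explicit description
$$M_{n,3}^r(d)=\{\,d(3l+r):0\le l\le g-1\,\}\qquad(r=0,1,2),$$
which comes from unwinding the definitions (for $r=0$ this is Lemma~\ref{FirstLemmaSetEqua}(ii), and the cases $r=1,2$ are the analogous elementary counts). The point of this step is that all three residue classes contribute the \emph{same} index range $0\le l\le g-1$, so that
$$\sum_{q\in M_{n,3}^r(d)}\omega_n^{jq}=\omega_n^{jdr}\sum_{l=0}^{g-1}(\omega_n^{3d})^{jl}=\omega_n^{jdr}\,A_0,\qquad A_0:=\sum_{l=0}^{g-1}\omega_g^{jl},$$
using $\omega_n^{3d}=\omega_g$. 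Here $A_0$ is a geometric sum, equal to $g$ if $g\mid j$ and to $0$ otherwise; this is consistent with Lemma~\ref{NewTooSmallLemma}, but I need the exact value, not just integrality.

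Substituting $r=1,2$ collapses the target sum to $T:=A_0\big(\omega_3\omega_n^{2jd}+\omega_3^2\omega_n^{jd}\big)$. If $g\nmid j$ then $A_0=0$ and $T=0\in\mathbb{Z}$. If $g\mid j$, write $j=gj_1$; then $A_0=g$, and since $\omega_n^{gd}=\omega_3$ we get $\omega_n^{jd}=\omega_3^{j_1}$, so $T=g\big(\omega_3^{\,1+2j_1}+\omega_3^{\,2+j_1}\big)$. The two exponents satisfy $(1+2j_1)+(2+j_1)\equiv0\Mod3$, hence are negatives of each other modulo $3$, so $\omega_3^{\,1+2j_1}+\omega_3^{\,2+j_1}=\omega_3^{a}+\omega_3^{-a}$ for $a=1+2j_1$, which equals $2$ or $-1$ according as $3\mid a$ or not. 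Either way $T\in\{0,-g,2g\}\subseteq\mathbb{Z}$.

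The only genuinely delicate point is the first step: verifying that $M_{n,3}^r(d)$ is exactly $\{d(3l+r):0\le l\le g-1\}$, i.e. that the constraint $0\le dk\le n-1$ together with $k\equiv r\Mod3$ picks out precisely $l\in\{0,\dots,g-1\}$ for every $r$; after that the computation is just a geometric series plus a three-case check on $j_1\bmod 3$. (One could instead expand $M_{n,3}^1(d)$ and $M_{n,3}^2(d)$ through Lemma~\ref{SecLemmaSetEqua}(iii)--(iv) into unions of $G_{n,3}^1$- and $G_{n,3}^2$-sets and regroup by $h$, but the direct geometric-series argument is shorter and avoids that machinery.)
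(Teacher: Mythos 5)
Your proof is correct, and it takes a genuinely different route from the paper's. The paper proves the lemma by evaluating the polynomial identity
\begin{align*}
z^n-1=(z^d-\omega_3)\Bigl(\sum_{q\in M_{n,3}^0(d)} z^{q} + \sum_{q\in M_{n,3}^2(d)} \omega_3 z^{q} + \sum_{q\in M_{n,3}^1(d)} \omega_3^2 z^{q}\Bigr)\omega_3^2
\end{align*}
at an $n$-th root of unity $z=\omega_n^j$ and splitting into the cases $z^d=\omega_3$ (where the sum is counted directly as $|M_{n,3}^1(d)\cup M_{n,3}^2(d)|=2g$) and $z^d\neq\omega_3$ (where the bracketed factor vanishes and the target reduces to $-\sum_{q\in M_{n,3}^0(d)}z^q$, whose integrality is Lemma~\ref{NewTooSmallLemma}, which in turn rests on So's theorem, Theorem~\ref{2006integral}). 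You instead observe that each $M_{n,3}^r(d)$ is exactly the arithmetic progression $\{d(3l+r):0\le l\le g-1\}$ --- a correct and easily verified count, consistent with $|M_{n,3}^r(d)|=g$ and with Lemma~\ref{FirstLemmaSetEqua}(ii) --- which collapses everything to a single geometric series $A_0=\sum_{l=0}^{g-1}\omega_g^{jl}$. Your case split on $g\mid j$ versus $g\nmid j$ corresponds precisely to the paper's two cases, and your three-case check on $j_1\bmod 3$ is sound ($\omega_3^a+\omega_3^{-a}\in\{2,-1\}$). What your argument buys is that it is entirely self-contained and elementary: it needs neither Lemma~\ref{NewTooSmallLemma} nor Theorem~\ref{2006integral}, and it yields the exact value $T\in\{0,-g,2g\}$ rather than mere integrality. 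What the paper's route buys is uniformity with the surrounding machinery (the same factorization-plus-prior-lemma pattern recurs in Section~\ref{sufficient}), but for this particular statement your computation is shorter and sharper.
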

	\begin{proof} It is enough to show that $ \sum\limits_{q\in M_{n,3}^2(d)} \omega_3 z^{q} + \sum\limits_{q\in M_{n,3}^1(d)}\omega_3^2 z^{q}$ is an integer for all $z \in \mathbb{C}$ satisfying $z^n=1$. We have
		\begin{align}
				z^n-1&=(z^d-\omega_3)\bigg(\sum\limits_{k=1}^{3g} \omega_3^{k-1} z^{n-dk} \bigg)\nonumber\\
				&=(z^d-\omega_3)\bigg(\sum\limits_{k=1}^{3g} \omega_3^{k} z^{n-dk} \bigg) \omega_3^2\nonumber\\
				&=(z^d-\omega_3) \bigg( \sum\limits_{\substack{1\leq k \leq 3g \\ k\equiv 0 \Mod 3}} z^{n-dk} + \sum\limits_{\substack{1\leq k \leq 3g \\ k\equiv 1 \Mod 3}} \omega_3 z^{n-dk} + \sum\limits_{\substack{1\leq k \leq 3g \\ k\equiv 2 \Mod 3}} \omega_3^2 z^{n-dk}\bigg) \omega_3^2\nonumber\\
				&=(z^d-\omega_3) \bigg( \sum\limits_{q\in M_{n,3}^0(d)} z^{q} + \sum\limits_{q\in M_{n,3}^2(d)} \omega_3 z^{q} + \sum\limits_{q\in M_{n,3}^1(d)} \omega_3^2 z^{q}\bigg) \omega_3^2.\label{nowEqAdd}
		\end{align} 
Let $z\in \mathbb{C}$ such that $z^n=1$. By Equation (\ref{nowEqAdd}), we get two possible cases.\\
		\textbf{Case 1.} Assume that $z^d-\omega_3=0$.\\
		If $q\in M_{n,3}^1(d)$ then $q=(3y_1+1)d$ for some $y_1\in \mathbb{Z}$. So $z^q= z^{(3y_1+1)d}=\omega_3^{3y_1+1}=\omega_3$.\\
		If $q\in M_{n,3}^2(d)$ then $q=(3y_2+2)d$ for some $y_2\in \mathbb{Z}$. So $z^q= z^{(3y_2+2)d}=\omega_3^{3y_2+2}=\omega_3^2$. Thus
		$$\sum\limits_{q\in M_{n,3}^2(d)} \omega_3 z^q + \sum\limits_{q\in M_{n,3}^1(d)}\omega_3^2 z^q =\sum\limits_{q\in M_{n,3}^2(d)}1 + \sum\limits_{q\in M_{n,3}^1(d)}1 = |M_{n,3}^1(d) \cup M_{n,3}^2(d)| \in \mathbb{Z}.$$
\noindent \textbf{Case 2.} Assume that $z^d-\omega_3\neq0$. Then 
\begin{align*}
&\sum\limits_{q\in M_{n,3}^0(d)} z^{q} + \sum\limits_{q\in M_{n,3}^2(d)} \omega_3 z^{q} + \sum\limits_{q\in M_{n,3}^1(d)} \omega_3^2 z^{q}=0\\
\Rightarrow &\sum\limits_{q\in M_{n,3}^2(d)} \omega_3 z^{q} + \sum\limits_{q\in M_{n,3}^1(d)} \omega_3^2 z^{q} = \sum\limits_{q\in M_{n,3}^0(d)} z^{q} \in \mathbb{Z}, \textnormal{ by Lemma } \ref{NewTooSmallLemma}. 
\end{align*}
	\end{proof}

For $n \equiv 0 \Mod 3$ and $j \in \{0,1,\ldots,n-1\}$, define 
$$Z_n^1(j)= \sum\limits_{q\in G_{n,3}^1(1)} \left(\omega_3\omega_n^{jq} + \omega_3^2 \omega_n^{-jq}\right)~\text{ and }~Z_n^2(j)= \sum\limits_{q\in G_{n,3}^2(1)}\left(\omega_3\omega_n^{jq} + \omega_3^2 \omega_n^{-jq}\right).$$ 
By Lemma \ref{CoroNewResSecKindHerInt}, $Z_n^1(j)$ is an integer if and only if $Z_n^2(j)$ is an integer.

Note that $G_n(d)=dG_{\frac{n}{d}}(1)$, $G_{n,3}^1(d)=dG_{\frac{n}{d},3}^1(1)$ and $G_{n,3}^2(d)=dG_{\frac{n}{d},3}^2(1)$. Therefore, if $d$ is a divisor of $\frac{n}{3}$ then
		\begin{equation*}\label{}
			\begin{split}
	Z_{\frac{n}{d}}^1(j) =\sum_{q \in G_{\frac{n}{d},3}^1(1)} \left[\omega_3(\omega_{\frac{n}{d}})^{jq}+ \omega_3^2(\omega_{\frac{n}{d}})^{-jq}\right] &=  \sum_{q\in G_{\frac{n}{d},3}^1(1)} \left[\omega_3(\omega_n^d)^{jq}+ \omega_3^2(\omega_n^d)^{-jq}\right]\\
					&=  \sum_{q\in dG_{\frac{n}{d},3}^1(1)}\left( \omega_3\omega_n^{jq}+ \omega_3^2\omega_n^{-jq}\right)\\
					&=  \sum_{q\in G_{n,3}^1(d)} \left(\omega_3\omega_n^{jq}+ \omega_3^2\omega_n^{-jq}\right).
			\end{split} 
		\end{equation*}			
Similarly, if $d$ be a divisor of $\frac{n}{3}$ then	
$$Z_{\frac{n}{d}}^2(j) = \sum_{q\in G_{n,3}^2(d)} \left(\omega_3\omega_n^{jq}+ \omega_3^2\omega_n^{-jq}\right).$$

	\begin{lema}\label{SuffiCondMainTheo} 
		Let $n\equiv 0 \Mod 3$ and $d$ divide $\frac{n}{3}$. Then $Z_{\frac{n}{d}}^1(j)$ is an integer for each $j \in \{0,1,\ldots,n-1\}$.
	\end{lema}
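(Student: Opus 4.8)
The plan is to induct on $g:=\frac{n}{3d}$, which runs over the positive divisors of $\frac{n}{3}$ as $d$ does, and to prove the assertion for all $j$ simultaneously (so that Lemma~\ref{CoroNewResSecKindHerInt} is available). The one input supplying integrality is Lemma~\ref{NewSmallLemma}, which tells us that
\[
\sum_{q\in M_{n,3}^2(d)}\omega_3\omega_n^{jq}+\sum_{q\in M_{n,3}^1(d)}\omega_3^2\omega_n^{jq}\in\mathbb{Z};
\]
the whole argument then amounts to recognising this number as $\sum_{h\in D_{g,3}^1}Z_{\frac{n}{hd}}^2(j)+\sum_{h\in D_{g,3}^2}Z_{\frac{n}{hd}}^1(j)$ and isolating the $h=1$ term.

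For the rewriting step I would first invoke Lemma~\ref{SecLemmaSetEqua}(iii) and (iv) to split $M_{n,3}^1(d)$ and $M_{n,3}^2(d)$ into the pieces $G_{n,3}^1(hd)$ and $G_{n,3}^2(hd)$ over $h\in D_{g,3}$; these unions are disjoint by Lemma~\ref{SecLemmaSetEqua}(i) together with $G_n(hd)\cap G_n(h'd)=\emptyset$ for $h\neq h'$, which is precisely what lets the sums be regrouped term by term. Collecting the summands attached to a fixed $h$, the quantity above becomes
\[
\sum_{h\in D_{g,3}^1}\Bigl(\sum_{q\in G_{n,3}^2(hd)}\omega_3\omega_n^{jq}+\sum_{q\in G_{n,3}^1(hd)}\omega_3^2\omega_n^{jq}\Bigr)+\sum_{h\in D_{g,3}^2}\Bigl(\sum_{q\in G_{n,3}^1(hd)}\omega_3\omega_n^{jq}+\sum_{q\in G_{n,3}^2(hd)}\omega_3^2\omega_n^{jq}\Bigr).
\]
The crucial fact is that $q\mapsto n-q$ is a bijection of $G_{n,3}^1(hd)$ onto $G_{n,3}^2(hd)$ — it preserves $\gcd$ with $n$, keeps elements in $\{1,\dots,n-1\}$, and flips the residue modulo $3$ since $\frac{n}{hd}$ is a multiple of $3$ — and it sends $\omega_n^{jq}$ to $\omega_n^{-jq}$. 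Re-indexing the $\omega_3^2$-sum inside each bracket by this bijection turns the bracket for $h\in D_{g,3}^1$ into $\sum_{q\in G_{n,3}^2(hd)}\bigl(\omega_3\omega_n^{jq}+\omega_3^2\omega_n^{-jq}\bigr)=Z_{\frac{n}{hd}}^2(j)$ and the bracket for $h\in D_{g,3}^2$ into $Z_{\frac{n}{hd}}^1(j)$. Hence Lemma~\ref{NewSmallLemma} gives, for all $j$,
\[
\sum_{h\in D_{g,3}^1}Z_{\frac{n}{hd}}^2(j)+\sum_{h\in D_{g,3}^2}Z_{\frac{n}{hd}}^1(j)\in\mathbb{Z}.
\]

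To close the induction, note that $1\in D_{g,3}^1$ while $1\notin D_{g,3}^2$, so the $h=1$ contribution above is exactly $Z_{\frac{n}{d}}^2(j)$. For every remaining $h\in D_{g,3}$ we have $h\geq 2$, $hd\mid\frac{n}{3}$, and $\frac{n}{3(hd)}=\frac{g}{h}<g$, so the induction hypothesis gives $Z_{\frac{n}{hd}}^1(j)\in\mathbb{Z}$ for all $j$, and then $Z_{\frac{n}{hd}}^2(j)\in\mathbb{Z}$ for all $j$ by Lemma~\ref{CoroNewResSecKindHerInt}; subtracting these integers from the last display leaves $Z_{\frac{n}{d}}^2(j)\in\mathbb{Z}$, and one further application of Lemma~\ref{CoroNewResSecKindHerInt} yields $Z_{\frac{n}{d}}^1(j)\in\mathbb{Z}$, as required. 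The base case $g=1$ (that is, $d=\frac{n}{3}$) is simply the instance of this step with no terms $h\geq 2$. I expect the only real difficulty to be the combinatorial bookkeeping in the middle paragraph: one must check that the decompositions of Lemma~\ref{SecLemmaSetEqua} are genuinely disjoint so that the sums regroup, and one must re-index the correct half of each bracket (the $\omega_3^2$-part) so that what survives is literally a $Z^1$ or $Z^2$ in the notation of the paper. The residue computation for $q\mapsto n-q$ and the inductive divisibility bookkeeping ($hd\mid\frac{n}{3}$, $\frac{g}{h}<g$) are short, and everything after that is a routine strong induction.
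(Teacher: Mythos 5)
Your proof is correct and follows essentially the same route as the paper: a strong induction over the divisors of $\frac{n}{3}$, the decomposition of Lemma~\ref{SecLemmaSetEqua}, the integrality input of Lemma~\ref{NewSmallLemma}, and the $Z^1\leftrightarrow Z^2$ swap of Lemma~\ref{CoroNewResSecKindHerInt}. The only (cosmetic) difference is that you work directly with the sum $\sum_{q\in M_{n,3}^2(d)}\omega_3\omega_n^{jq}+\sum_{q\in M_{n,3}^1(d)}\omega_3^2\omega_n^{jq}$ and isolate $Z_{\frac{n}{d}}^2(j)$ before one final conjugation, whereas the paper rewrites this quantity as $\sum_{q\in M_{n,3}^1(d)}\left(\omega_3\omega_n^{jq}+\omega_3^2\omega_n^{-jq}\right)$ and isolates $Z_{\frac{n}{d}}^1(j)$ directly.
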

	\begin{proof}
	Let $d_1=\frac{n}{3},d_2,...,d_r=1$ be the positive divisors of $\frac{n}{3}$ in decreasing order. Apply induction on $k$ to prove that $Z_{\frac{n}{d_k}}(j)$ is an integer for each $j\in \mathbb{Z}$. For $k=1$, $Z_{\frac{n}{d_1}}^1(j)= Z_3^1(j)=\sum\limits_{q\in G_{3,3}^1(1)}\left(\omega_3\omega_3^{jq} + \omega_3^2 \omega_3^{-jq}\right)$ is an integer for each $j\in \{0,1,\ldots,n-1\}$. Assume that $Z_{\frac{n}{d_k}}^1(j)$ is an integer for each $j\in \mathbb{Z}$, where $1\leq k <r$. Let $n=3d_{k+1}g_{k+1}$ for some $d_{k+1},g_{k+1}\in \mathbb{Z}$. By Lemma ~\ref{SecLemmaSetEqua}, we have
		\begin{equation}\label{eq2}
			\begin{split}
				M_{n,3}^1(d_{k+1}) =\bigg(\bigcup\limits_{h\in D_{g_{k+1},3}^1} G_{n,3}^1(hd_{k+1}) \bigg) \cup \bigg( \bigcup\limits_{h\in D_{g_{k+1},3}^2} G_{n,3}^2(hd_{k+1}) \bigg).
			\end{split} 
		\end{equation}
		Note that $hd_{k+1}$ is also a divisor of $\frac{n}{3}$. If $h>1$ then $hd_{k+1}=d_s$ for some $s<k+1$, and so by induction hypothesis $Z_{\frac{n}{hd_{k+1}}}^1(j)$ and $Z_{\frac{n}{hd_{k+1}}}^2(j)$ are integers for each $j \in \mathbb{Z}$. Note that the unions in (\ref{eq2}) is disjoint. We have
		\begin{equation*}
			\begin{split}
				\sum\limits_{q\in M_{n,3}^1(d_{k+1})}\left(\omega_3\omega_n^{jq} + \omega_3^2 \omega_n^{-jq}\right) =& \sum\limits_{q\in G_{n,3}^1(d_{k+1})}\left(\omega_3\omega_n^{jq} +  \omega_3^2 \omega_n^{-jq}\right)  \\
				&+ \sum\limits_{h\in D_{g_{k+1},3}^1, h> 1} \bigg( \sum\limits_{q\in G_{n,3}^1(hd_{k+1})}\left(\omega_3\omega_n^{jq} +  \omega_3^2\omega_n^{-jq}\right) \bigg)\\
				&+ \sum\limits_{h\in D_{g_{k+1},3}^2} \bigg( \sum\limits_{q\in G_{n,3}^2(hd_{k+1})}\left(\omega_3 \omega_n^{jq} +  \omega_3^2 \omega_n^{-jq} \right)\bigg).
			\end{split} 
		\end{equation*}
		Therefore
		\begin{align}
		\sum\limits_{q\in M_{n,3}^1(d_{k+1})} \left(\omega_3\omega_n^{jq} + \omega_3^2 \omega_n^{-jq}\right) = Z_{\frac{n}{d_{k+1}}}^1(j) +  \sum\limits_{h\in D_{g_{k+1},3}^1, h> 1}	Z_{\frac{n}{hd_{k+1}}}^1(j)	+ \sum\limits_{h\in D_{g_{k+1},3}^2} Z_{\frac{n}{hd_{k+1}}}^2(j),\nonumber 
		\end{align} 
		and so 
		\begin{align}\label{eq4}
		 Z_{\frac{n}{d_{k+1}}}^1(j)  = \sum\limits_{q\in M_{n,3}^1(d_{k+1})} \left(\omega_3\omega_n^{jq} + \omega_3^2 \omega_n^{-jq}\right) -  \sum\limits_{h\in D_{g_{k+1},3}^1, h> 1}	Z_{\frac{n}{hd_{k+1}}}^1(j)	- \sum\limits_{h\in D_{g_{k+1},3}^2} Z_{\frac{n}{hd_{k+1}}}^2(j).
		\end{align}
		By Lemma~\ref{CoroNewResSecKindHerInt} and Lemma~\ref{NewSmallLemma}, the first summand in the right of (\ref{eq4}) is an integer, and by induction hypothesis the other two summands are also integers. Hence $Z_{\frac{n}{d_{k+1}}}^1(j)$ is an integer for each $j\in \mathbb{Z}$. Thus the proof is complete by induction.
	\end{proof}

\begin{corollary}\label{SuffiCondMainTheoCoro}
		Let $n\equiv 0 \Mod 3$ and $d$  a divisor of  $\frac{n}{3}$. If $S \in \{G_{n,3}^1(d),G_{n,3}^2(d) \}$, then the sum $\sum\limits_{q\in S}(\omega_3\omega_n^{qj}+ \omega_3^2\omega_n^{-qj})$ is an integer for each $j\in \{0,1,...,n-1\}$. 
	\end{corollary}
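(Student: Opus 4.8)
The plan is to deduce Corollary~\ref{SuffiCondMainTheoCoro} directly from Lemma~\ref{SuffiCondMainTheo} together with the identities relating $Z^1_{n/d}$ and $Z^2_{n/d}$ to sums over $G^1_{n,3}(d)$ and $G^2_{n,3}(d)$ that were established just before the statement of Lemma~\ref{SuffiCondMainTheo}. Recall that for a divisor $d$ of $\frac{n}{3}$ we have
\[
Z_{\frac{n}{d}}^1(j)=\sum_{q\in G_{n,3}^1(d)}\bigl(\omega_3\omega_n^{jq}+\omega_3^2\omega_n^{-jq}\bigr)
\quad\text{and}\quad
Z_{\frac{n}{d}}^2(j)=\sum_{q\in G_{n,3}^2(d)}\bigl(\omega_3\omega_n^{jq}+\omega_3^2\omega_n^{-jq}\bigr),
\]
so the sum appearing in the corollary is exactly $Z_{\frac{n}{d}}^1(j)$ when $S=G_{n,3}^1(d)$ and exactly $Z_{\frac{n}{d}}^2(j)$ when $S=G_{n,3}^2(d)$.

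First I would handle the case $S=G_{n,3}^1(d)$: since $d\mid\frac{n}{3}$, Lemma~\ref{SuffiCondMainTheo} immediately gives that $Z_{\frac{n}{d}}^1(j)\in\mathbb{Z}$ for every $j\in\{0,1,\ldots,n-1\}$, which is the claim. For the case $S=G_{n,3}^2(d)$, I would invoke Lemma~\ref{CoroNewResSecKindHerInt} with the skew-symmetric set $G_{n,3}^1(d)$ (note $G_{n,3}^1(d)^{-1}=G_{n,3}^2(d)$ modulo $n$, since negating an element that is $\equiv 1\pmod 3$ in the relevant sense produces one that is $\equiv 2\pmod 3$, and $\gcd$ with $n$ is preserved) and $k=3$, using $\omega_3=\omega_3$, $\omega_3^{k-1}=\omega_3^2$. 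This lemma says that $\sum_{q\in G_{n,3}^1(d)}\omega_3\omega_n^{jq}+\sum_{q\in G_{n,3}^2(d)}\omega_3^2\omega_n^{jq}$ is an integer for all $j$ iff $\sum_{q\in G_{n,3}^1(d)}\omega_3^2\omega_n^{jq}+\sum_{q\in G_{n,3}^2(d)}\omega_3\omega_n^{jq}$ is an integer for all $j$; after rewriting the sum over $G_{n,3}^2(d)$ by substituting $q\mapsto -q$ (which bijects $G_{n,3}^2(d)$ with $G_{n,3}^1(d)$), one sees that the two expressions in Lemma~\ref{CoroNewResSecKindHerInt} are precisely $Z_{\frac{n}{d}}^1(j)$ and $Z_{\frac{n}{d}}^2(j)$. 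Hence integrality of $Z_{\frac{n}{d}}^1(j)$ for all $j$, which we already have, forces integrality of $Z_{\frac{n}{d}}^2(j)$ for all $j$.

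I do not anticipate a genuine obstacle here; the corollary is essentially a bookkeeping consequence of Lemma~\ref{SuffiCondMainTheo} and the already-derived substitution identities. The only point that needs a little care is the precise correspondence $G_{n,3}^1(d)^{-1}=G_{n,3}^2(d)$ in $\mathbb{Z}_n$ and the matching of the $\omega_3$ versus $\omega_3^2$ coefficients between the statement of Lemma~\ref{CoroNewResSecKindHerInt} and the definitions of $Z_{\frac{n}{d}}^1$ and $Z_{\frac{n}{d}}^2$; once that is checked, both cases follow in a line or two.
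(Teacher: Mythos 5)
Your proposal is correct and follows exactly the paper's own route: the paper also deduces the corollary from Lemma~\ref{SuffiCondMainTheo} (giving integrality of $Z_{\frac{n}{d}}^1(j)$) together with Lemma~\ref{CoroNewResSecKindHerInt} applied with $k=3$ and the identification $\left(G_{n,3}^1(d)\right)^{-1}=G_{n,3}^2(d)$ to transfer integrality to $Z_{\frac{n}{d}}^2(j)$. You have simply written out the bookkeeping that the paper leaves implicit.
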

	\begin{proof}
	The proof follows from Lemma \ref{CoroNewResSecKindHerInt} and Lemma \ref{SuffiCondMainTheo}. 
	\end{proof}

\begin{corollary}\label{SuffiCondMainTheoCoroNew}
	Let $n\equiv 0 \Mod 3$ and $d$ a divisor of $\frac{n}{3}$. If $S \in \{G_{n,3}^1(d),G_{n,3}^2(d) \}$, then the sum $\sum\limits_{q\in S}(\omega_6\omega_n^{qj}+ \omega_6^5\omega_n^{-qj})$ is an integer for each $j\in \{0,1,...,n-1\}$. 
\end{corollary}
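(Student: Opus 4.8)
The plan is to reduce the statement to Corollary~\ref{SuffiCondMainTheoCoro} by using the identities $\omega_6=-\omega_3^2$ and $\omega_6^5=-\omega_3$ recorded after Lemma~\ref{SpecMixCayGraph}. First I would rewrite the sum in question as
$$\sum_{q\in S}\left(\omega_6\omega_n^{qj}+\omega_6^5\omega_n^{-qj}\right)=-\sum_{q\in S}\left(\omega_3^2\omega_n^{qj}+\omega_3\omega_n^{-qj}\right),$$
so that it suffices to prove $\sum_{q\in S}\left(\omega_3^2\omega_n^{qj}+\omega_3\omega_n^{-qj}\right)\in\mathbb{Z}$ for every $j\in\{0,1,\dots,n-1\}$.

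Next, note that for $S\in\{G_{n,3}^1(d),G_{n,3}^2(d)\}$ one has $S^{-1}\in\{G_{n,3}^2(d),G_{n,3}^1(d)\}$, so in particular $S$ is a skew-symmetric subset of $\mathbb{Z}_n$; reindexing $\omega_n^{-qj}=\omega_n^{(n-q)j}$ gives
$$\sum_{q\in S}\left(\omega_3^2\omega_n^{qj}+\omega_3\omega_n^{-qj}\right)=\sum_{q\in S}\omega_3^2\omega_n^{qj}+\sum_{q\in S^{-1}}\omega_3\omega_n^{qj}.$$
Taking $k=3$ in Lemma~\ref{CoroNewResSecKindHerInt} (so $\omega_k^{k-1}=\omega_3^2$), the right-hand side above is precisely the \emph{second} of the two expressions in that lemma, while the \emph{first} one equals $\sum_{q\in S}\omega_3\omega_n^{qj}+\sum_{q\in S^{-1}}\omega_3^2\omega_n^{qj}=\sum_{q\in S}\left(\omega_3\omega_n^{qj}+\omega_3^2\omega_n^{-qj}\right)$, which is an integer for every $j$ by Corollary~\ref{SuffiCondMainTheoCoro}. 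Applying the ``if and only if'' of Lemma~\ref{CoroNewResSecKindHerInt} then yields $\sum_{q\in S}\left(\omega_3^2\omega_n^{qj}+\omega_3\omega_n^{-qj}\right)\in\mathbb{Z}$ for every $j$, and multiplying by $-1$ finishes the proof. Equivalently, in matrix terms, the circulant matrix carrying the $\omega_6$-weights on $S$ and $\omega_6^5$-weights on $S^{-1}$ is $-\overline{H_S}$, where $H_S$ is the $\omega_3$-weighted circulant matrix whose spectrum consists of the sums shown to be integral in Lemma~\ref{SuffiCondMainTheo} and Corollary~\ref{SuffiCondMainTheoCoro}, and passing to the conjugate (and the sign) preserves integrality.

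There is no genuine obstacle here: the corollary is pure bookkeeping on top of two already-established results. The only point requiring a little care is matching the two halves of the sum to the correct expression in Lemma~\ref{CoroNewResSecKindHerInt} — specifically, keeping track of the fact that the $\omega_6$-term equals $-\omega_3^2$ (not $-\omega_3$), so that after the sign change one lands on the conjugate form with weights $\omega_3^2,\omega_3$, and therefore genuinely needs the equivalence in Lemma~\ref{CoroNewResSecKindHerInt} rather than Corollary~\ref{SuffiCondMainTheoCoro} applied verbatim.
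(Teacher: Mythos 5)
Your proof is correct and follows essentially the same route as the paper: both reduce the claim to Corollary~\ref{SuffiCondMainTheoCoro} via the identities $\omega_6=-\omega_3^2$ and $\omega_6^5=-\omega_3$. The only (harmless) difference is the final citation: the paper uses $\left(G_{n,3}^1(d)\right)^{-1}=G_{n,3}^2(d)$ to apply Corollary~\ref{SuffiCondMainTheoCoro} verbatim to $S^{-1}$ (which you yourself note lies in $\{G_{n,3}^1(d),G_{n,3}^2(d)\}$), so your closing remark that one \emph{genuinely needs} the equivalence of Lemma~\ref{CoroNewResSecKindHerInt} is not quite accurate, although routing through that lemma as you do is perfectly valid.
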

\begin{proof}
Note that $\omega_3=-\omega_6^5,\omega_3^2=-\omega_6$ and $\left(G_{n,3}^1(d)\right)^{-1}=G_{n,3}^2(d)$. Therefore the proof follows from Corollary~\ref{SuffiCondMainTheoCoro}.
\end{proof}
	
	In the next result, we get a sufficient condition on the connection set $S$ for which the oriented circulant graph $\text{Circ}(\mathbb{Z}_n,S)$ is HS-integral. 
	
	\begin{theorem}\label{CoroSuffiCondiOriCay} 
		Let $S$ be a subset of $\mathbb{Z}_n$ such that
		\begin{equation*}
			S= \left\{ \begin{array}{ll}
				\emptyset & \text{ if } n\not\equiv 0\Mod 3 \\ 
				\bigcup\limits_{d\in \mathscr{D}}S_n(d) & \text{ if } n\equiv 0\Mod 3,
			\end{array}\right.
		\end{equation*}
		where $\mathscr{D} \subseteq \{ d: d\mid \frac{n}{3}\}$ and $S_n(d)\in \{ G_{n,3}^{1}(d) , G_{n,3}^{2}(d)\}$. Then the oriented circulant graph $\text{Circ}(\mathbb{Z}_n,S)$ is HS-integral. 
	\end{theorem}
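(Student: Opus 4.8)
The plan is to reduce the HS-integrality of $\text{Circ}(\mathbb{Z}_n,S)$ to the two cases already handled by the machinery of Section~\ref{sufficient}, using the decomposition of $S$ into its symmetric and skew-symmetric parts provided by Lemma~\ref{SpecMixCayGraph}. First I would write $S = S_{\mathrm{sym}} \cup S_{\mathrm{skew}}$, where $S_{\mathrm{sym}} = S \setminus \overline{S}$ and $S_{\mathrm{skew}} = \overline{S}$, and recall from Lemma~\ref{SpecMixCayGraph} that the $j$-th HS-eigenvalue is $\gamma_j = \lambda_j + \mu_j$ with $\lambda_j = \sum_{k \in S_{\mathrm{sym}}} \omega_n^{jk}$ and $\mu_j = \sum_{k \in \overline{S}} (\omega_6 \omega_n^{jk} + \omega_6^5 \omega_n^{-jk})$. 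It therefore suffices to show that each $\lambda_j$ is an integer and each $\mu_j$ is an integer, for every $j \in \{0,1,\ldots,n-1\}$.

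For the $\lambda_j$ part: when $n \not\equiv 0 \Mod 3$ the hypothesis forces $S = \emptyset$, so there is nothing to prove; when $n \equiv 0 \Mod 3$, the symmetric part $S_{\mathrm{sym}}$ is a union of those $S_n(d)$ which happen to be self-inverse-closed together with balanced pairs $G_{n,3}^1(d) \cup G_{n,3}^2(d) = G_n(d)$. The cleaner way to see this: I would argue that the symmetric part of $S$, being a union of sets of the form $G_n(d)$ (since $\overline{S_n(d)} = S_n(d)$ always — each $G_{n,3}^r(d)$ is skew-symmetric as $\left(G_{n,3}^1(d)\right)^{-1} = G_{n,3}^2(d)$), is actually empty, so in fact $S = \overline{S}$ is entirely skew-symmetric and $\lambda_j = 0$ for all $j$. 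Thus the whole contribution is $\gamma_j = \mu_j$.

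For the $\mu_j$ part, the key observation is that $\overline{S} = S = \bigcup_{d \in \mathscr{D}} S_n(d)$ where the union is disjoint (the $G_{n,3}^r(d)$ for distinct $(d,r)$ are pairwise disjoint, since $G_n(d) \cap G_n(d') = \emptyset$ for $d \ne d'$ and $G_{n,3}^1(d) \cap G_{n,3}^2(d) = \emptyset$ by Lemma~\ref{SecLemmaSetEqua}(i)). Hence $\mu_j = \sum_{d \in \mathscr{D}} \sum_{k \in S_n(d)} (\omega_6 \omega_n^{jk} + \omega_6^5 \omega_n^{-jk})$, and each inner sum is exactly of the form handled by Corollary~\ref{SuffiCondMainTheoCoroNew}, hence an integer. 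A finite sum of integers is an integer, so $\mu_j \in \mathbb{Z}$ for every $j$, and therefore every HS-eigenvalue $\gamma_j = \lambda_j + \mu_j = \mu_j$ is an integer, i.e.\ $\text{Circ}(\mathbb{Z}_n,S)$ is HS-integral.

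I expect the only real subtlety to be bookkeeping rather than mathematics: one must be careful that $S$ as given is genuinely skew-symmetric (so that it is a legitimate connection set for an \emph{oriented} graph and so that $\overline{S} = S$), and that the union $\bigcup_{d \in \mathscr{D}} S_n(d)$ is disjoint so that $\mu_j$ splits cleanly as a sum over $d \in \mathscr{D}$. Both of these follow from the disjointness statements in Lemma~\ref{SecLemmaSetEqua} and from $\left(G_{n,3}^1(d)\right)^{-1} = G_{n,3}^2(d)$; once they are in place, the result is an immediate consequence of Corollary~\ref{SuffiCondMainTheoCoroNew}. No genuinely hard step remains.
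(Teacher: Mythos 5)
Your proposal is correct and follows essentially the same route as the paper: the paper's proof is precisely the one-line reduction to Theorem~\ref{CharaOfOrieCayGraphNot3} (for $n\not\equiv 0\Mod 3$) and Corollary~\ref{SuffiCondMainTheoCoroNew} (applied to each $S_n(d)$ in the disjoint union), and you have simply written out the routine verifications — that $S$ is skew-symmetric because $\left(G_{n,3}^1(d)\right)^{-1}=G_{n,3}^2(d)$ and the pieces are pairwise disjoint, so $\lambda_j=0$ and $\mu_j$ splits as a finite sum of the integers supplied by the corollary.
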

	\begin{proof}
	The proof follows from Theorem~\ref{CharaOfOrieCayGraphNot3} and Corollary~\ref{SuffiCondMainTheoCoroNew}.
	\end{proof}
	
%%%%%%%%%%%%%%%%%%%%%%%%%%%%%%%%%%%%%%%%%%%%%%%%%%%%%%%%%%%%%%%%
%%%%%%%%%%%%%%%%%%%%%%%%%%%%%%%%%%%%%%%%%%%%%%%%%%%%%%%%%%%%%%%%
\section{Characterization of HS-integral mixed circulant graphs}\label{necessity-mixed}

In this section, we first characterize HS-integrality of oriented circulant graphs by proving the necessity of Theorem \ref{CoroSuffiCondiOriCay}. After that, we extend this characterization to mixed circulant graphs.  Recall that the case $n\not\equiv 0\Mod 3$ have already been considered in Theorem \ref{CharaOfOrieCayGraphNot3}. Now let $n\equiv 0\Mod 3$ and $d$ a divisor of $\frac{n}{3}$. Define $u(d)=\left[u(d)_1,\ldots,u(d)_n\right]^T$ to be the $n$-vector, where
	
	$$u(d)_k= \left\{ \begin{array}{rl}
		\omega_3 & \mbox{if } k\in G_{n,3}^1(d) \\
		\omega_3^2 & \mbox{if } k\in G_{n,3}^2(d)\\ 
		0 &   \mbox{otherwise}. 
	\end{array}\right.	$$ 
Let $E=[e_{st}]$ be the $n \times n$ matrix defined by $e_{st}=\omega_n^{st}$. Note that $E$ is an invertible matrix and $EE^*=nI_n$, where $E^*$ is the conjugate transpose of $E$. Using Theorem ~\ref{CoroSuffiCondiOriCay}, we get $E u(d) \in \mathbb{Z}^{n}$.

	\begin{lema}\label{MainLemmNeceCond}
		Let $n\equiv 0\Mod 3$ and $v\in \mathbb{Q}^n(\omega_3)$ such that $Ev \in \mathbb{Q}^n$. Let the coordinates of $v$ be indexed by the elements of $\mathbb{Z}_n$. Then
		\begin{enumerate}[label=(\roman*)]
			\item $\overline{v}_s=v_{n-s}$ for all $1\leq s \leq n$;
			\item if $d \mid \frac{n}{6}$ then $v_s=v_t$ for all $s,t\in G_{n,3}^1(d)$; and
			\item if $d \nmid \frac{n}{3}, d \mid n$ and $d<n$ then $v_r=v_{n-r}$ for all $r\in G_n(d)$.
		\end{enumerate} 
	\end{lema}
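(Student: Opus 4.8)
The plan is to exploit the relation $E E^{*}=nI_n$ together with rationality of $Ev$ to pin down the coordinates of $v$. Let me write $w=Ev\in\mathbb Q^n$; then $v=\tfrac1n E^{*}w$, i.e. $v_s=\tfrac1n\sum_{t} \overline{\omega_n^{st}}\,w_t=\tfrac1n\sum_t \omega_n^{-st}w_t$. Each coordinate of $v$ is thus a $\mathbb Q$-linear combination of powers of $\omega_n$. The Galois group $\mathrm{Gal}(\mathbb Q(\omega_n)/\mathbb Q)\cong(\mathbb Z/n\mathbb Z)^{\times}$ acts on these expressions by $\sigma_a(\omega_n)=\omega_n^{a}$, and since $w\in\mathbb Q^n$ we get $\sigma_a(v_s)=\tfrac1n\sum_t\omega_n^{-ast}w_t$. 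This is the engine for all three parts.

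For part (i), the element $a=-1\in(\mathbb Z/n\mathbb Z)^{\times}$ gives complex conjugation (since $\omega_n^{-1}=\overline{\omega_n}$), so $\overline{v_s}=\sigma_{-1}(v_s)=\tfrac1n\sum_t\omega_n^{st}w_t=v_{-s}=v_{n-s}$, reading indices mod $n$. For parts (ii) and (iii) the idea is the same: if $s,t$ lie in the same set $G_n(d)$ (for part (iii)) then $t\equiv as\Mod n$ for some $a$ with $\gcd(a,n)=1$, hence $\sigma_a$ permutes the summation and $\sigma_a(v_s)=v_t$; combined with part (i) this will yield $v_r=v_{n-r}$ once we check the sign/parity bookkeeping — the hypothesis $d\nmid\frac n3$ is presumably what forces the relevant automorphism to interact correctly with the Eisenstein structure, or equivalently forces $\omega_3\notin\mathbb Q(\omega_n^{d})$ so that the extra constraint "$v\in\mathbb Q(\omega_3)^n$" does not interfere. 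For part (ii), where $d\mid\frac n6$, the set $G_{n,3}^{1}(d)$ consists of $ds$ with $s\equiv 1\Mod 3$ and $\gcd(ds,n)=d$; given two such elements $ds,dt$ we want an automorphism carrying one to the other. The subtlety is that $v\in\mathbb Q(\omega_3)^n$ rather than $\mathbb Q^n$, so the relevant Galois group is $\mathrm{Gal}(\mathbb Q(\omega_3,\omega_n)/\mathbb Q(\omega_3))$; when $n\equiv0\Mod 3$ this is a quotient of $(\mathbb Z/n\mathbb Z)^{\times}$ and one must track which residues mod $3$ the automorphisms induce — this is exactly why $G_{n,3}^1$ and $G_{n,3}^2$ are separated, and why the congruence $d\mid\frac n6$ (rather than merely $d\mid\frac n3$) appears: it guarantees enough room in the index $\frac n{d}$ for the two congruence classes mod $3$ to both be realized by units.

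**The main obstacle** I anticipate is precisely this interplay between the two "moduli": the arithmetic mod $n$ governing which power of $\omega_n$ appears, and the arithmetic mod $3$ governing the $G^1$ versus $G^2$ split and the field $\mathbb Q(\omega_3)$. Concretely, for part (ii) one must show that for $ds,dt\in G_{n,3}^1(d)$ there is $a\in(\mathbb Z/n\mathbb Z)^{\times}$ with $a\equiv1\Mod 3$ and $at\equiv s\Mod{n/d}$ (so that $\sigma_a$ fixes $\mathbb Q(\omega_3)$, hence fixes the rationality data, yet moves the index correctly); the existence of such $a$ is a CRT argument that needs $3\mid\frac n{d}$, i.e. $d\mid\frac n3$, but getting the $G^1\to G^1$ (not $G^1\to G^2$) direction may be where the stronger hypothesis $d\mid\frac n6$ is used, via an even/odd argument on the cofactor. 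I would handle parts (i) and (iii) first as warm-ups, then devote the bulk of the argument to the careful CRT/Galois bookkeeping for part (ii), invoking $Eu(d)\in\mathbb Z^n$ (from Theorem~\ref{CoroSuffiCondiOriCay}) only implicitly through the abstract hypothesis $Ev\in\mathbb Q^n$.
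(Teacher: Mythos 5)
Your engine---letting the Galois group of $\mathbb{Q}(\omega_3,\omega_n)$ over $\mathbb{Q}(\omega_3)$ act on the formula $v_s=\tfrac1n\sum_t\omega_n^{-st}w_t$---is exactly the Galois-theoretic reformulation of what the paper does, namely observing that $\omega_n^s$ and all of its conjugates over $\mathbb{Q}(\omega_3)$ are roots of $p(x)=\tfrac1n\sum_j w_j x^{n-j}-v_s\in\mathbb{Q}(\omega_3)[x]$ and then invoking irreducibility of the relevant minimal polynomial. Part (i) is complete as you wrote it. The genuine gap is that for parts (ii) and (iii) you only describe what must be verified, flagging it with ``presumably'' and ``may be'': the decisive input is never supplied. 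For (ii) that input is the transitivity of $\mathrm{Gal}(\mathbb{Q}(\omega_3,\omega_{n/d})/\mathbb{Q}(\omega_3))$ on $\{\omega_{n/d}^{a}:a\in G_{\frac{n}{d},3}^1(1)\}$, equivalently the irreducibility of $\Phi_{\frac{n}{d},3}^1(x)$ over $\mathbb{Q}(\omega_3)$ in degree $\varphi(n/d)/2$ --- a nontrivial fact which the paper proves in Section~\ref{prel} and which you neither cite nor establish. For (iii) the input is the irreducibility of $\Phi_{n/d}(x)$ over $\mathbb{Q}(\omega_3)$ when $3\nmid \frac{n}{d}$; you correctly locate the reason (that $d\nmid\frac n3$ forces $\omega_3\notin\mathbb{Q}(\omega_{n/d})$, so that an automorphism with $a\equiv-1\Mod{\frac{n}{d}}$ and $a\equiv 1\Mod 3$ exists), but you leave it as a remark rather than an argument.

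Moreover, the obstacle you anticipate for (ii) is not where any difficulty lies. Write $s=ds'$ and $t=dt'$ with $s',t'\in G_{\frac{n}{d},3}^1(1)$; the automorphism you need satisfies $a\equiv t'(s')^{-1}\Mod{\frac{n}{d}}$, and since $3\mid\frac{n}{d}$ and $s'\equiv t'\equiv 1\Mod 3$, this congruence already forces $a\equiv 1\Mod 3$. Hence $\sigma_a$ fixes $\omega_3$ automatically and carries class $1$ to class $1$: there is no ``even/odd argument on the cofactor,'' no $G^1$-versus-$G^2$ leakage to rule out, and no role for the hypothesis $d\mid\frac n6$ --- the paper's own proof of (ii) assumes only $d\mid\frac n3$, so the stated hypothesis is simply stronger than what is needed. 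A further simplification you miss: every term $\omega_n^{-sj}$ equals $\omega_{n/d}^{-s'j}$, so $v_s\in\mathbb{Q}(\omega_{n/d})$ and you may work directly in $\mathrm{Gal}(\mathbb{Q}(\omega_{n/d})/\mathbb{Q})\cong(\mathbb{Z}/\tfrac{n}{d}\mathbb{Z})^\times$, avoiding any need to lift $a$ to a unit modulo $n$. With the transitivity statement actually proved, or with the paper's irreducibility theorem for $\Phi_{n,3}^1$ cited, your plan closes; as written it is a correct outline whose central step is missing.
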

	\begin{proof}
	Let $u=Ev \in \mathbb{Q}^n$, where $v \in \mathbb{Q}^n(\omega_3)$. Then $v=\frac{1}{n}E^*u$, and so 
	\begin{align}
	v_s = \frac{1}{n} \sum_{j=1}^{n}u_j\omega_n^{s(n-j)}.\label{EqSixNecess}
	\end{align}
			\begin{enumerate}[label=(\roman*)]
			\item Taking complex conjugate of both sides in Equation~\ref{EqSixNecess}, we get $\overline{v}_s=v_{n-s}$ for all $1\leq s \leq n$.
			
			\item Assume that $d \mid \frac{n}{3}$ and $s,t\in G_{n,3}^1(d)$. Then $\frac{s}{d},\frac{t}{d}\in G_{\frac{n}{d},3}^1(1)$, and so $\omega_n^s$, $\omega_n^t$ are roots of $\Phi_{\frac{n}{d},3}^1(x)$. Note that $v_{s}=\frac{1}{n}\sum\limits_{j=1}^{n}u_j\omega_n^{s(n-j)} \in \mathbb{Q}(\omega_3)$. Therefore $\omega_n^s$ is a root of $p(x)=\frac{1}{n} \sum\limits_{j=1}^{n}u_jx^{n-j}-v_s \in \mathbb{Q}(\omega_3)[x]$. Hence $p(x)$ is a multiple of the irreducible monic polynomial $\Phi_{\frac{n}{d},3}^1(x)$, and so $\omega_n^t$ is also a root of $p(x)$, that is, $v_{s}=\frac{1}{n} \sum\limits_{j=1}^{n}u_j\omega_n^{t(n-j)}=v_t$.
			
			\item Assume that $d \nmid \frac{n}{3}, d \mid n$, $d<n$ and $r\in G_n(d)$. Then $r,n-r\in G_n(d)$, and so $\omega_n^r$, $\omega_n^{n-r}$ are roots of $\Phi_{\frac{n}{d}}(x)$. Since $v_{r}=\frac{1}{n} \sum\limits_{j=1}^{n}u_j\omega_n^{r(n-j)} \in \mathbb{Q}(\omega_3)$, we find that $\omega_n^r$ is a root of the polynomial $q(x)=\frac{1}{n} \sum\limits_{j=1}^{n}u_jx^{(n-j)}-v_r \in \mathbb{Q}(\omega_3)[x]$. Therefore $q(x)$ is a multiple of the irreducible monic polynomial $\Phi_{\frac{n}{d}}(x)$. Note that $\Phi_{\frac{n}{d}}(x)$ is irreducible over $\mathbb{Q}(\omega_3)$ as $\frac{n}{d} \not\equiv 0 \Mod 3$.
		Thus $\omega_n^{n-r}$ is also a root of $q(x)$, that is, $v_{r}=\frac{1}{n} \sum\limits_{j=1}^{n}u_j\omega_n^{(n-r)(n-j)}=v_{n-r}$. 
		\end{enumerate} 
		\end{proof}

	In the next theorem, we prove the converse of Theorem~\ref{CoroSuffiCondiOriCay}.

	\begin{theorem}\label{CharaOfOrieCayGraph}
Let $S$ be a skew-symmetric subset of $\mathbb{Z}_n$. Then the oriented circulant graph $\text{Circ}(\mathbb{Z}_n,S)$ is HS-integral if and only if 
	\begin{equation*}
			S= \left\{ \begin{array}{ll}
				\emptyset & \text{ if } n\not\equiv 0\Mod 3 \\ 
				\bigcup\limits_{d\in \mathscr{D}}S_n(d) & \text{ if } n\equiv 0\Mod 3,
			\end{array}\right.
		\end{equation*}
		where $\mathscr{D} \subseteq \{ d: d\mid \frac{n}{3}\}$ and $S_n(d)\in \{ G_{n,3}^{1}(d) , G_{n,3}^{2}(d)\}$. 
	\end{theorem}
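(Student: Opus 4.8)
The plan is to read off the characterization from results already established. Sufficiency is exactly Theorem~\ref{CoroSuffiCondiOriCay}, and for necessity the case $n\not\equiv 0\Mod 3$ is Theorem~\ref{CharaOfOrieCayGraphNot3}; so the remaining task is to show that if $n\equiv 0\Mod 3$ and $\text{Circ}(\mathbb{Z}_n,S)$ is HS-integral with $S$ skew-symmetric, then $S=\bigcup_{d\in\mathscr{D}}S_n(d)$ for some $\mathscr{D}\subseteq\{d:d\mid\frac{n}{3}\}$ and some choice $S_n(d)\in\{G_{n,3}^1(d),G_{n,3}^2(d)\}$. I would encode $S$ by the vector $v$, indexed by $\mathbb{Z}_n$, with $v_k=\omega_6$ for $k\in S$, $v_k=\omega_6^5$ for $k\in S^{-1}$, and $v_k=0$ otherwise. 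Because $\omega_6=-\omega_3^2$ and $\omega_6^5=-\omega_3$, this vector lies in $\mathbb{Q}(\omega_3)^n$, and by Lemma~\ref{SpecMixCayGraph} the $j$-th coordinate of $Ev$ equals $\sum_{k\in S}(\omega_6\omega_n^{jk}+\omega_6^5\omega_n^{-jk})=\mu_j$, which is an HS-eigenvalue of the graph, hence an integer. Thus $Ev\in\mathbb{Z}^n\subseteq\mathbb{Q}^n$, so all three conclusions of Lemma~\ref{MainLemmNeceCond} apply to $v$.

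The core of the argument is then a case analysis over the partition of the nonzero elements of $\mathbb{Z}_n$ into the sets $G_n(d)$ with $d\mid n$, $d<n$; I would determine $S\cap G_n(d)$ for each such $d$. If $d\nmid\frac{n}{3}$, Lemma~\ref{MainLemmNeceCond}(iii) forces $v_r=v_{n-r}$ for every $r\in G_n(d)$; since $S$ is skew-symmetric, $r\in S$ would give $v_r=\omega_6$ but $v_{n-r}=\omega_6^5$ (as then $-r\in S^{-1}$), and $r\in S^{-1}$ would give the same contradiction with roles reversed, so $G_n(d)\cap(S\cup S^{-1})=\emptyset$. If $d\mid\frac{n}{3}$, then $G_n(d)=G_{n,3}^1(d)\cup G_{n,3}^2(d)$ is a disjoint union by Lemma~\ref{SecLemmaSetEqua}(ii), and a short check shows $\big(G_{n,3}^1(d)\big)^{-1}=G_{n,3}^2(d)$: inversion preserves $\gcd(\cdot,n)$, and since $\frac{n}{d}\equiv 0\Mod 3$ it sends a residue $\equiv 1\Mod 3$ to one $\equiv 2\Mod 3$. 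By Lemma~\ref{MainLemmNeceCond}(ii), $v$ is constant on $G_{n,3}^1(d)$ with some value $c\in\{0,\omega_6,\omega_6^5\}$, and then by part~(i) it is constant with value $\overline{c}$ on $G_{n,3}^2(d)$. Hence either $c=0$ and $G_n(d)\cap(S\cup S^{-1})=\emptyset$, or $c=\omega_6$ and $G_{n,3}^1(d)\subseteq S$ with $G_{n,3}^2(d)\subseteq S^{-1}$, or $c=\omega_6^5$ and the inclusions are reversed.

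Assembling these local conclusions gives the result: taking $\mathscr{D}=\{d\mid\frac{n}{3}:G_n(d)\cap S\neq\emptyset\}$ and, for $d\in\mathscr{D}$, letting $S_n(d)$ be whichever of $G_{n,3}^1(d),G_{n,3}^2(d)$ is contained in $S$, the case analysis shows $S=\bigcup_{d\in\mathscr{D}}S_n(d)$, which is the claimed form. I expect the only subtle step to be the $d\mid\frac{n}{3}$ case — identifying $G_{n,3}^2(d)$ as the inverse of $G_{n,3}^1(d)$, transporting the constant value $c$ to the conjugate value $\overline{c}$ on it via Lemma~\ref{MainLemmNeceCond}(i), and matching the three possibilities for $c$ exactly to the three possibilities ``$G_n(d)$ contributes nothing'', ``$S_n(d)=G_{n,3}^1(d)$'', and ``$S_n(d)=G_{n,3}^2(d)$''. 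Everything else is bookkeeping with the disjoint decomposition of $\mathbb{Z}_n$ coming from Theorem~\ref{2006integral} together with Lemma~\ref{SecLemmaSetEqua}.
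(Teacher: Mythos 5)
Your proposal is correct and follows essentially the same route as the paper: the same vector $v$ (the paper writes its entries as $-\omega_3^2=\omega_6$ and $-\omega_3=\omega_6^5$), the observation that $Ev$ consists of HS-eigenvalues and hence lies in $\mathbb{Z}^n$, and then Lemma~\ref{MainLemmNeceCond} applied divisor by divisor — parts (i) and (iii) to rule out $G_n(d)$ with $d\nmid\frac{n}{3}$, and part (ii) to force $v$ to be constant on each $G_{n,3}^1(d)$ and $G_{n,3}^2(d)$ with $d\mid\frac{n}{3}$. Your write-up is somewhat more explicit than the paper's (e.g., in verifying $\left(G_{n,3}^1(d)\right)^{-1}=G_{n,3}^2(d)$ and matching the three possible constant values to the three outcomes), but the argument is the same.
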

	\begin{proof} We proved the sufficient part in Theorem~\ref{CoroSuffiCondiOriCay}. Assume that $\text{Circ}(\mathbb{Z}_n,S)$ is HS-integral. If $n\not\equiv 0\Mod 3$ then by Theorem \ref{CharaOfOrieCayGraphNot3}, $S= \emptyset$. Let $n\equiv 0\Mod 3$ and let $v$ be the vector of length $n$ defined by
			$$v_k= \left\{ \begin{array}{rl}
				-\omega_3^2 & \text{ if }  k\in S \\
				-\omega_3 & \text{ if }  n-k\in S\\ 
				0 &  \text{ otherwise}. 
			\end{array}\right.			$$
Note that each entry of $Ev$ is an HS-eigenvalue of $\text{Circ}(\mathbb{Z}_n,S)$, and so $Ev \in \mathbb{Z}^{n}$. Thus $v$ satisfies all the conditions of Lemma~\ref{MainLemmNeceCond}. 

By conditions $(i)$ and $(iii)$ of Lemma~\ref{MainLemmNeceCond}, if $d \nmid \frac{n}{3}, d \mid n$ and $d<n$ then $v_r=v_{n-r}\in \Rl$ for all $r\in G_n(d)$. Also, by definition of $v$ we have, $v_r\in \Rl$ if and only if $r\notin S\cup S^{-1}$. Therefore $S\subseteq \bigcup\limits_{d \mid \frac{n}{3}} \left[G_{n,3}^1(d) \cup G_{n,3}^2(d) \right]$. 

Further, if $r\in S\cap G_{n,3}^1(d)$ for some $d \mid \frac{n}{3}$, again by condition $(ii)$ of Lemma~\ref{MainLemmNeceCond}, we have $G_{n,3}^1(d)\subseteq S$. Similarly, if $r\in S\cap G_{n,3}^2(d)$ for some $d \mid \frac{n}{3}$ then $G_{n,3}^2(d)\subseteq S$. Thus there exists $\mathscr{D} \subseteq \{ d: d\mid \frac{n}{3}\}$ such that 
\begin{equation*}
			S= \left\{ \begin{array}{ll}
				\emptyset & \text{ if } n\not\equiv 0\Mod 3 \\ 
				\bigcup\limits_{d\in \mathscr{D}}S_n(d) & \text{ if } n\equiv 0\Mod 3,
			\end{array}\right.
		\end{equation*}
		where $S_n(d)\in \{ G_{n,3}^{1}(d) , G_{n,3}^{2}(d)\}$. 
	\end{proof}

The following example illustrates Theorem~\ref{CharaOfOrieCayGraph}.

\begin{ex} Consider $\Gamma= \mathbb{Z}_{12}$ and $S=\{ 2,5,11\}$. The oriented graph $\text{Circ}(\mathbb{Z}_{12}, S)$ is shown in Figure~\ref{a}. We see that $G_{12,3}^2(1)=\{5,11\}$ and $G_{12,3}^1(2)=\{2\}$. Therefore $S=G_{12,3}^2(1) \cup G_{12,3}^1(2)$, and hence $\text{Circ}(\mathbb{Z}_{12}, S)$ is HS-integral. Further, using Corollary~\ref{SpecMixCayGraph}, the HS-eigenvalues of  $\text{Circ}(\mathbb{Z}_{12}, S)$ are obtained as
$$\mu_j=\cos\left(\frac{\pi j}{3}\right) + \cos\left(\frac{5\pi j}{6}\right) + \cos\left(\frac{11\pi j}{6}\right) - \sqrt{3} \left[ \sin\left(\frac{\pi j}{3}\right) + \sin\left(\frac{5\pi j}{6}\right) + \sin\left(\frac{1\pi j}{6}\right) \right],$$
$\text{for each } j \in \mathbb{Z}_{12}$. We find that $\mu_0=3, \mu_1=-1, \mu_2=2, \mu_3=-1, \mu_4=3, \mu_5=2, \mu_6=-1,$ $ \mu_7=-1, \mu_8=-6, \mu_9=-1, \mu_{10}=-1$ and $\mu_{11}=2$. Thus all the HS-eigenvalues of $\text{Circ}(\mathbb{Z}_{12}, S)$ are integers. \qed
\end{ex}

\begin{figure}[ht]
\centering
\hfill
\begin{subfigure}{0.45\textwidth}
%%%%%%%%%%%%
\tikzset{every picture/.style={line width=0.75pt}} %set default line width to 0.75pt        
\begin{tikzpicture}[x=0.23pt,y=0.23pt,yscale=-1,xscale=1]
%uncomment if require: \path (0,859); %set diagram left start at 0, and has height of 859
%Shape: Circle [id:dp6154863704464872] 
\draw   (200,440.5) .. controls (200,285.58) and (325.58,160) .. (480.5,160) .. controls (635.42,160) and (761,285.58) .. (761,440.5) .. controls (761,595.42) and (635.42,721) .. (480.5,721) .. controls (325.58,721) and (200,595.42) .. (200,440.5) -- cycle ;
%Shape: Circle [id:dp5708467294461815] 
\draw  [fill={rgb, 255:red, 255; green, 255; blue, 255 }  ,fill opacity=1 ] (454.13,161) .. controls (454.13,146.43) and (465.93,134.63) .. (480.5,134.63) .. controls (495.07,134.63) and (506.88,146.43) .. (506.88,161) .. controls (506.88,175.57) and (495.07,187.38) .. (480.5,187.38) .. controls (465.93,187.38) and (454.13,175.57) .. (454.13,161) -- cycle ;
%Shape: Circle [id:dp390711154153922] 
\draw  [fill={rgb, 255:red, 255; green, 255; blue, 255 }  ,fill opacity=1 ] (593.13,199) .. controls (593.13,184.43) and (604.93,172.63) .. (619.5,172.63) .. controls (634.07,172.63) and (645.88,184.43) .. (645.88,199) .. controls (645.88,213.57) and (634.07,225.38) .. (619.5,225.38) .. controls (604.93,225.38) and (593.13,213.57) .. (593.13,199) -- cycle ;
%Shape: Circle [id:dp8575308179254957] 
\draw  [fill={rgb, 255:red, 255; green, 255; blue, 255 }  ,fill opacity=1 ] (698.13,303) .. controls (698.13,288.43) and (709.93,276.63) .. (724.5,276.63) .. controls (739.07,276.63) and (750.88,288.43) .. (750.88,303) .. controls (750.88,317.57) and (739.07,329.38) .. (724.5,329.38) .. controls (709.93,329.38) and (698.13,317.57) .. (698.13,303) -- cycle ;
%Shape: Circle [id:dp06857790298981825] 
\draw  [fill={rgb, 255:red, 255; green, 255; blue, 255 }  ,fill opacity=1 ] (733.13,440) .. controls (733.13,425.43) and (744.93,413.63) .. (759.5,413.63) .. controls (774.07,413.63) and (785.88,425.43) .. (785.88,440) .. controls (785.88,454.57) and (774.07,466.38) .. (759.5,466.38) .. controls (744.93,466.38) and (733.13,454.57) .. (733.13,440) -- cycle ;
%Shape: Circle [id:dp6773526337842977] 
\draw  [fill={rgb, 255:red, 255; green, 255; blue, 255 }  ,fill opacity=1 ] (455.13,719) .. controls (455.13,704.43) and (466.93,692.63) .. (481.5,692.63) .. controls (496.07,692.63) and (507.88,704.43) .. (507.88,719) .. controls (507.88,733.57) and (496.07,745.38) .. (481.5,745.38) .. controls (466.93,745.38) and (455.13,733.57) .. (455.13,719) -- cycle ;
%Shape: Circle [id:dp27534329759048204] 
\draw  [fill={rgb, 255:red, 255; green, 255; blue, 255 }  ,fill opacity=1 ] (698.13,583) .. controls (698.13,568.43) and (709.93,556.63) .. (724.5,556.63) .. controls (739.07,556.63) and (750.88,568.43) .. (750.88,583) .. controls (750.88,597.57) and (739.07,609.38) .. (724.5,609.38) .. controls (709.93,609.38) and (698.13,597.57) .. (698.13,583) -- cycle ;
%Shape: Circle [id:dp3924996301912562] 
\draw  [fill={rgb, 255:red, 255; green, 255; blue, 255 }  ,fill opacity=1 ] (595.13,682) .. controls (595.13,667.43) and (606.93,655.63) .. (621.5,655.63) .. controls (636.07,655.63) and (647.88,667.43) .. (647.88,682) .. controls (647.88,696.57) and (636.07,708.38) .. (621.5,708.38) .. controls (606.93,708.38) and (595.13,696.57) .. (595.13,682) -- cycle ;
%Shape: Circle [id:dp49344024510734663] 
\draw  [fill={rgb, 255:red, 255; green, 255; blue, 255 }  ,fill opacity=1 ] (314.6,199.62) .. controls (314.51,185.05) and (326.25,173.17) .. (340.81,173.08) .. controls (355.38,172.98) and (367.26,184.72) .. (367.35,199.29) .. controls (367.44,213.85) and (355.71,225.73) .. (341.14,225.83) .. controls (326.58,225.92) and (314.69,214.18) .. (314.6,199.62) -- cycle ;
%Shape: Circle [id:dp7504509074194334] 
\draw  [fill={rgb, 255:red, 255; green, 255; blue, 255 }  ,fill opacity=1 ] (210.27,302.94) .. controls (210.18,288.38) and (221.91,276.49) .. (236.48,276.4) .. controls (251.04,276.31) and (262.92,288.04) .. (263.02,302.61) .. controls (263.11,317.18) and (251.37,329.06) .. (236.81,329.15) .. controls (222.24,329.24) and (210.36,317.51) .. (210.27,302.94) -- cycle ;
%Shape: Circle [id:dp9650674320938587] 
\draw  [fill={rgb, 255:red, 255; green, 255; blue, 255 }  ,fill opacity=1 ] (175.13,439.72) .. controls (175.04,425.15) and (186.77,413.27) .. (201.34,413.18) .. controls (215.9,413.09) and (227.78,424.82) .. (227.88,439.39) .. controls (227.97,453.95) and (216.23,465.84) .. (201.67,465.93) .. controls (187.1,466.02) and (175.22,454.29) .. (175.13,439.72) -- cycle ;
%Shape: Circle [id:dp7266733027919862] 
\draw  [fill={rgb, 255:red, 255; green, 255; blue, 255 }  ,fill opacity=1 ] (210.03,582.94) .. controls (209.93,568.37) and (221.67,556.49) .. (236.24,556.4) .. controls (250.8,556.3) and (262.68,568.04) .. (262.78,582.61) .. controls (262.87,597.17) and (251.13,609.05) .. (236.57,609.15) .. controls (222,609.24) and (210.12,597.5) .. (210.03,582.94) -- cycle ;
%Shape: Circle [id:dp8926782516472133] 
\draw  [fill={rgb, 255:red, 255; green, 255; blue, 255 }  ,fill opacity=1 ] (312.65,682.58) .. controls (312.56,668.02) and (324.29,656.13) .. (338.86,656.04) .. controls (353.43,655.95) and (365.31,667.68) .. (365.4,682.25) .. controls (365.49,696.82) and (353.76,708.7) .. (339.19,708.79) .. controls (324.62,708.88) and (312.74,697.15) .. (312.65,682.58) -- cycle ;
%Shape: Arc [id:dp29499804267456875] 
\draw  [draw opacity=0] (184.52,421.19) .. controls (184.52,421.19) and (184.52,421.19) .. (184.52,421.19) .. controls (125.3,386.99) and (106.92,307.96) .. (143.46,244.67) .. controls (180.01,181.37) and (257.64,157.77) .. (316.86,191.96) -- (250.69,306.57) -- cycle ; \draw   (184.52,421.19) .. controls (184.52,421.19) and (184.52,421.19) .. (184.52,421.19) .. controls (125.3,386.99) and (106.92,307.96) .. (143.46,244.67) .. controls (180.01,181.37) and (257.64,157.77) .. (316.86,191.96) ;
%Shape: Arc [id:dp9632237415952548] 
\draw  [draw opacity=0] (347.64,174.46) .. controls (347.64,174.46) and (347.64,174.46) .. (347.64,174.46) .. controls (347.64,106.08) and (406.89,50.65) .. (479.98,50.65) .. controls (553.07,50.65) and (612.33,106.08) .. (612.33,174.46) -- (479.98,174.46) -- cycle ; \draw   (347.64,174.46) .. controls (347.64,174.46) and (347.64,174.46) .. (347.64,174.46) .. controls (347.64,106.08) and (406.89,50.65) .. (479.98,50.65) .. controls (553.07,50.65) and (612.33,106.08) .. (612.33,174.46) ;
%Shape: Arc [id:dp6776949356503521] 
\draw  [draw opacity=0] (645.81,192.85) .. controls (645.81,192.85) and (645.81,192.85) .. (645.81,192.85) .. controls (705.03,158.66) and (782.67,182.26) .. (819.21,245.56) .. controls (855.76,308.85) and (837.38,387.89) .. (778.15,422.08) -- (711.98,307.46) -- cycle ; \draw   (645.81,192.85) .. controls (645.81,192.85) and (645.81,192.85) .. (645.81,192.85) .. controls (705.03,158.66) and (782.67,182.26) .. (819.21,245.56) .. controls (855.76,308.85) and (837.38,387.89) .. (778.15,422.08) ;
%Shape: Arc [id:dp6072134285058511] 
\draw  [draw opacity=0] (779.15,459.85) .. controls (779.15,459.85) and (779.15,459.85) .. (779.15,459.85) .. controls (838.38,494.04) and (856.76,573.07) .. (820.21,636.37) .. controls (783.67,699.67) and (706.03,723.27) .. (646.81,689.08) -- (712.98,574.46) -- cycle ; \draw   (779.15,459.85) .. controls (779.15,459.85) and (779.15,459.85) .. (779.15,459.85) .. controls (838.38,494.04) and (856.76,573.07) .. (820.21,636.37) .. controls (783.67,699.67) and (706.03,723.27) .. (646.81,689.08) ;
%Shape: Arc [id:dp5513819278769221] 
\draw  [draw opacity=0] (613.33,706.46) .. controls (613.33,706.46) and (613.33,706.46) .. (613.33,706.46) .. controls (613.33,774.85) and (554.07,830.28) .. (480.98,830.28) .. controls (407.89,830.28) and (348.64,774.85) .. (348.64,706.46) -- (480.98,706.46) -- cycle ; \draw   (613.33,706.46) .. controls (613.33,706.46) and (613.33,706.46) .. (613.33,706.46) .. controls (613.33,774.85) and (554.07,830.28) .. (480.98,830.28) .. controls (407.89,830.28) and (348.64,774.85) .. (348.64,706.46) ;
%Shape: Arc [id:dp8027675579723924] 
\draw  [draw opacity=0] (313.15,687.8) .. controls (253.93,721.99) and (176.3,698.39) .. (139.76,635.09) .. controls (103.21,571.79) and (121.59,492.76) .. (180.81,458.57) -- (246.98,573.18) -- cycle ; \draw   (313.15,687.8) .. controls (253.93,721.99) and (176.3,698.39) .. (139.76,635.09) .. controls (103.21,571.79) and (121.59,492.76) .. (180.81,458.57) ;
%Shape: Arc [id:dp43271950441840645] 
\draw  [draw opacity=0] (231.08,277.74) .. controls (196.89,218.52) and (220.48,140.89) .. (283.78,104.35) .. controls (347.08,67.8) and (426.11,86.18) .. (460.3,145.4) -- (345.69,211.57) -- cycle ; \draw   (231.08,277.74) .. controls (196.89,218.52) and (220.48,140.89) .. (283.78,104.35) .. controls (347.08,67.8) and (426.11,86.18) .. (460.3,145.4) ;
%Shape: Arc [id:dp42645995951216253] 
\draw  [draw opacity=0] (502.08,146.4) .. controls (502.08,146.4) and (502.08,146.4) .. (502.08,146.4) .. controls (536.27,87.18) and (615.3,68.8) .. (678.6,105.35) .. controls (741.9,141.89) and (765.49,219.52) .. (731.3,278.74) -- (616.69,212.57) -- cycle ; \draw   (502.08,146.4) .. controls (502.08,146.4) and (502.08,146.4) .. (502.08,146.4) .. controls (536.27,87.18) and (615.3,68.8) .. (678.6,105.35) .. controls (741.9,141.89) and (765.49,219.52) .. (731.3,278.74) ;
%Shape: Arc [id:dp32158733835942643] 
\draw  [draw opacity=0] (749.69,310.23) .. controls (749.69,310.23) and (749.69,310.23) .. (749.69,310.23) .. controls (818.07,310.23) and (873.5,369.48) .. (873.5,442.57) .. controls (873.5,515.66) and (818.07,574.92) .. (749.69,574.92) -- (749.69,442.57) -- cycle ; \draw   (749.69,310.23) .. controls (749.69,310.23) and (749.69,310.23) .. (749.69,310.23) .. controls (818.07,310.23) and (873.5,369.48) .. (873.5,442.57) .. controls (873.5,515.66) and (818.07,574.92) .. (749.69,574.92) ;
%Shape: Arc [id:dp5405656149377852] 
\draw  [draw opacity=0] (724.5,609.38) .. controls (724.5,609.38) and (724.5,609.38) .. (724.5,609.38) .. controls (758.69,668.6) and (735.09,746.23) .. (671.8,782.77) .. controls (608.5,819.32) and (529.47,800.94) .. (495.28,741.72) -- (609.89,675.55) -- cycle ; \draw   (724.5,609.38) .. controls (724.5,609.38) and (724.5,609.38) .. (724.5,609.38) .. controls (758.69,668.6) and (735.09,746.23) .. (671.8,782.77) .. controls (608.5,819.32) and (529.47,800.94) .. (495.28,741.72) ;
%Shape: Arc [id:dp6101293601690313] 
\draw  [draw opacity=0] (463.79,737.49) .. controls (429.6,796.71) and (350.57,815.09) .. (287.27,778.54) .. controls (223.97,742) and (200.38,664.37) .. (234.57,605.15) -- (349.18,671.32) -- cycle ; \draw   (463.79,737.49) .. controls (429.6,796.71) and (350.57,815.09) .. (287.27,778.54) .. controls (223.97,742) and (200.38,664.37) .. (234.57,605.15) ;
%Shape: Arc [id:dp9513274432288508] 
\draw  [draw opacity=0] (212.18,575.66) .. controls (143.8,575.66) and (88.36,516.41) .. (88.36,443.32) .. controls (88.36,370.23) and (143.8,310.97) .. (212.18,310.97) -- (212.18,443.32) -- cycle ; \draw   (212.18,575.66) .. controls (143.8,575.66) and (88.36,516.41) .. (88.36,443.32) .. controls (88.36,370.23) and (143.8,310.97) .. (212.18,310.97) ;
%Straight Lines [id:da14279004941053852] 
\draw    (498,182) -- (621.5,655.63) ;
%Straight Lines [id:da642128244307768] 
\draw    (625,224) -- (498,699) ;
%Straight Lines [id:da26354488180700275] 
\draw    (711,325) -- (362,670) ;
%Straight Lines [id:da13662694677935083] 
\draw    (738,454) -- (262.78,582.61) ;
%Straight Lines [id:da9918180056481068] 
\draw    (698.13,583) -- (226,450) ;
%Straight Lines [id:da12082031731438159] 
\draw    (600,667) -- (248,326) ;
%Straight Lines [id:da7520233577342121] 
\draw    (467,699) -- (341.14,225.83) ;
%Straight Lines [id:da2879955486256087] 
\draw    (338.86,656.04) -- (463,178) ;
%Straight Lines [id:da956816577259868] 
\draw    (253,562) -- (594,206) ;
%Straight Lines [id:da6240782173801751] 
\draw    (222,423) -- (697,298) ;
%Straight Lines [id:da31260459344852143] 
\draw    (263.02,302.61) -- (740,423) ;
%Straight Lines [id:da7137159458068683] 
\draw    (365,206) -- (712,560) ;
\draw   (418.15,178.47) .. controls (410.15,174.21) and (402.78,172.32) .. (396.01,172.82) .. controls (402.16,169.95) and (407.67,164.7) .. (412.58,157.08) ;
\draw   (558.59,182.5) .. controls (553.68,174.89) and (548.16,169.65) .. (542.01,166.79) .. controls (548.77,167.28) and (556.15,165.38) .. (564.14,161.11) ;
\draw   (678.59,258.16) .. controls (678.43,249.1) and (676.5,241.73) .. (672.79,236.06) .. controls (678.28,240.04) and (685.54,242.31) .. (694.59,242.9) ;
\draw   (295.45,246.33) .. controls (286.4,245.92) and (278.93,247.38) .. (273.03,250.72) .. controls (277.35,245.5) and (280.08,238.39) .. (281.25,229.4) ;
\draw   (224.32,358.26) .. controls (216.4,362.67) and (210.81,367.84) .. (207.56,373.79) .. controls (208.49,367.08) and (207.07,359.6) .. (203.32,351.34) ;
\draw   (217.91,499.86) .. controls (213.62,507.85) and (211.71,515.22) .. (212.17,521.98) .. controls (209.33,515.83) and (204.1,510.29) .. (196.5,505.36) ;
\draw   (281.17,621.47) .. controls (281.1,630.53) and (282.83,637.94) .. (286.4,643.71) .. controls (281.02,639.59) and (273.81,637.13) .. (264.79,636.31) ;
\draw   (400.68,697.66) .. controls (405.08,705.58) and (410.24,711.18) .. (416.18,714.44) .. controls (409.47,713.51) and (401.98,714.91) .. (393.72,718.64) ;
\draw   (542.37,701.91) .. controls (550.3,706.3) and (557.64,708.32) .. (564.41,707.94) .. controls (558.21,710.7) and (552.61,715.86) .. (547.58,723.39) ;
\draw   (661.81,640.45) .. controls (670.87,640.13) and (678.2,638.08) .. (683.81,634.27) .. controls (679.92,639.83) and (677.77,647.13) .. (677.33,656.19) ;
\draw   (737.44,522.13) .. controls (745.08,517.25) and (750.36,511.76) .. (753.25,505.63) .. controls (752.73,512.39) and (754.59,519.77) .. (758.81,527.79) ;
\draw   (744.03,383.43) .. controls (748.6,375.6) and (750.78,368.3) .. (750.55,361.53) .. controls (753.17,367.78) and (758.2,373.5) .. (765.62,378.7) ;
\draw   (470.69,39.24) .. controls (477.36,45.38) and (484.02,49.07) .. (490.69,50.3) .. controls (484.02,51.52) and (477.36,55.21) .. (470.69,61.35) ;
\draw   (676.1,89.48) .. controls (677.89,98.36) and (681.1,105.26) .. (685.78,110.17) .. controls (679.66,107.24) and (672.11,106.31) .. (663.11,107.36) ;
\draw   (834.23,633.03) .. controls (825.61,635.82) and (819.12,639.8) .. (814.77,645) .. controls (816.99,638.59) and (817.05,630.98) .. (814.99,622.15) ;
\draw   (884.86,431.43) .. controls (878.64,438.02) and (874.88,444.64) .. (873.57,451.29) .. controls (872.43,444.61) and (868.82,437.91) .. (862.76,431.17) ;
\draw   (268.21,101.76) .. controls (277.18,103.09) and (284.76,102.41) .. (290.97,99.69) .. controls (286.13,104.44) and (282.69,111.23) .. (280.61,120.05) ;
\draw   (129.26,249.01) .. controls (137.79,245.96) and (144.15,241.78) .. (148.34,236.44) .. controls (146.32,242.92) and (146.49,250.53) .. (148.82,259.29) ;
\draw   (77.17,455.75) .. controls (83.62,449.39) and (87.63,442.91) .. (89.17,436.31) .. controls (90.07,443.03) and (93.44,449.86) .. (99.25,456.81) ;
\draw   (136.33,651.55) .. controls (138.18,642.68) and (137.93,635.07) .. (135.56,628.71) .. controls (140.03,633.81) and (146.62,637.63) .. (155.3,640.21) ;
\draw   (490.94,840.99) .. controls (484.13,835.01) and (477.39,831.47) .. (470.69,830.4) .. controls (477.33,829.03) and (483.91,825.18) .. (490.43,818.9) ;
\draw   (823.79,230.17) .. controls (821.48,238.93) and (821.33,246.55) .. (823.36,253.01) .. controls (819.16,247.69) and (812.79,243.53) .. (804.25,240.5) ;
\draw   (686.92,786.6) .. controls (678.06,784.71) and (670.45,784.92) .. (664.08,787.26) .. controls (669.2,782.81) and (673.06,776.25) .. (675.68,767.57) ;
\draw   (291.83,793.74) .. controls (289.12,785.09) and (285.19,778.57) .. (280.03,774.18) .. controls (286.42,776.45) and (294.03,776.58) .. (302.87,774.6) ;
\draw   (521.45,225.24) .. controls (516.82,233.03) and (514.57,240.3) .. (514.74,247.08) .. controls (512.18,240.81) and (507.2,235.05) .. (499.82,229.78) ;
\draw   (622.9,274.39) .. controls (615.28,279.31) and (610.04,284.83) .. (607.18,290.97) .. controls (607.67,284.21) and (605.77,276.84) .. (601.51,268.84) ;
\draw   (684.56,367.24) .. controls (675.51,367.5) and (668.16,369.5) .. (662.53,373.27) .. controls (666.45,367.74) and (668.65,360.45) .. (669.15,351.4) ;
\draw   (697.25,476.31) .. controls (689.2,472.16) and (681.8,470.36) .. (675.05,470.94) .. controls (681.16,468) and (686.6,462.68) .. (691.41,454.99) ;
\draw   (648.95,580.82) .. controls (644.41,572.98) and (639.15,567.48) .. (633.14,564.32) .. controls (639.87,565.14) and (647.33,563.59) .. (655.52,559.72) ;
\draw   (559.66,644.16) .. controls (559.46,635.1) and (557.5,627.75) .. (553.76,622.09) .. controls (559.26,626.05) and (566.54,628.29) .. (575.59,628.84) ;
\draw   (443.97,659.38) .. controls (448.58,651.58) and (450.8,644.3) .. (450.6,637.52) .. controls (453.19,643.78) and (458.19,649.53) .. (465.59,654.76) ;
\draw   (339.54,610.31) .. controls (347.11,605.33) and (352.31,599.77) .. (355.12,593.6) .. controls (354.69,600.36) and (356.65,607.72) .. (360.98,615.68) ;
\draw   (277.79,521.5) .. controls (286.8,520.55) and (293.97,517.99) .. (299.3,513.79) .. controls (295.81,519.61) and (294.18,527.04) .. (294.38,536.1) ;
\draw   (262.05,401.4) .. controls (270.14,405.48) and (277.56,407.21) .. (284.31,406.56) .. controls (278.23,409.56) and (272.83,414.93) .. (268.1,422.66) ;
\draw   (308.73,301.64) .. controls (313.1,309.58) and (318.24,315.19) .. (324.17,318.48) .. controls (317.46,317.52) and (309.97,318.89) .. (301.69,322.59) ;
\draw   (403.95,230.4) .. controls (404.62,239.43) and (406.96,246.68) .. (410.99,252.13) .. controls (405.28,248.47) and (397.9,246.6) .. (388.84,246.52) ;

% Text Node
\draw (450,145) node [anchor=north west][inner sep=0.75pt]    {$\ 0$};
% Text Node
\draw (590,180) node [anchor=north west][inner sep=0.75pt]    {$\ 1$};
% Text Node
\draw (710,288) node [anchor=north west][inner sep=0.75pt]    {$2$};
% Text Node
\draw (732,425) node [anchor=north west][inner sep=0.75pt]    {$\ 3$};
% Text Node
\draw (450,702) node [anchor=north west][inner sep=0.75pt]    {$\ 6$};
% Text Node
\draw (710,567) node [anchor=north west][inner sep=0.75pt]    {$4$};
% Text Node
\draw (608,665) node [anchor=north west][inner sep=0.75pt]    {$5$};
% Text Node
\draw (300,180) node [anchor=north west][inner sep=0.75pt]  [rotate=-359.64]  {$\ 11$};
% Text Node
\draw (208,285) node [anchor=north west][inner sep=0.75pt]  [rotate=-359.64]  {$10$};
% Text Node
\draw (175,423) node [anchor=north west][inner sep=0.75pt]  [rotate=-359.64]  {$\ 9$};
% Text Node
\draw (220,562) node [anchor=north west][inner sep=0.75pt]  [rotate=-359.64]  {$8$};
% Text Node
\draw (325,665) node [anchor=north west][inner sep=0.75pt]  [rotate=-359.64]  {$7$};
\end{tikzpicture}

%%%%%%%%%%%%
\caption{$S=\{ 2,5,11\}$}\label{a}
\end{subfigure}
\hfill
\begin{subfigure}{0.45\textwidth}
%%%%%%%%%%%%{0.45\textwidth}
\tikzset{every picture/.style={line width=0.75pt}} %set default line width to 0.75pt        
\begin{tikzpicture}[x=0.23pt,y=0.23pt,yscale=-1,xscale=1]
%uncomment if require: \path (0,859); %set diagram left start at 0, and has height of 859
%Shape: Circle [id:dp8558240605104023] 
\draw   (200,440.5) .. controls (200,285.58) and (325.58,160) .. (480.5,160) .. controls (635.42,160) and (761,285.58) .. (761,440.5) .. controls (761,595.42) and (635.42,721) .. (480.5,721) .. controls (325.58,721) and (200,595.42) .. (200,440.5) -- cycle ;
%Shape: Circle [id:dp5558084298582012] 
\draw  [fill={rgb, 255:red, 255; green, 255; blue, 255 }  ,fill opacity=1 ] (454.13,161) .. controls (454.13,146.43) and (465.93,134.63) .. (480.5,134.63) .. controls (495.07,134.63) and (506.88,146.43) .. (506.88,161) .. controls (506.88,175.57) and (495.07,187.38) .. (480.5,187.38) .. controls (465.93,187.38) and (454.13,175.57) .. (454.13,161) -- cycle ;
%Shape: Circle [id:dp2710861141195188] 
\draw  [fill={rgb, 255:red, 255; green, 255; blue, 255 }  ,fill opacity=1 ] (593.13,199) .. controls (593.13,184.43) and (604.93,172.63) .. (619.5,172.63) .. controls (634.07,172.63) and (645.88,184.43) .. (645.88,199) .. controls (645.88,213.57) and (634.07,225.38) .. (619.5,225.38) .. controls (604.93,225.38) and (593.13,213.57) .. (593.13,199) -- cycle ;
%Shape: Circle [id:dp7750468877609495] 
\draw  [fill={rgb, 255:red, 255; green, 255; blue, 255 }  ,fill opacity=1 ] (698.13,303) .. controls (698.13,288.43) and (709.93,276.63) .. (724.5,276.63) .. controls (739.07,276.63) and (750.88,288.43) .. (750.88,303) .. controls (750.88,317.57) and (739.07,329.38) .. (724.5,329.38) .. controls (709.93,329.38) and (698.13,317.57) .. (698.13,303) -- cycle ;
%Shape: Circle [id:dp6687374524737969] 
\draw  [fill={rgb, 255:red, 255; green, 255; blue, 255 }  ,fill opacity=1 ] (733.13,440) .. controls (733.13,425.43) and (744.93,413.63) .. (759.5,413.63) .. controls (774.07,413.63) and (785.88,425.43) .. (785.88,440) .. controls (785.88,454.57) and (774.07,466.38) .. (759.5,466.38) .. controls (744.93,466.38) and (733.13,454.57) .. (733.13,440) -- cycle ;
%Shape: Circle [id:dp6792049272508816] 
\draw  [fill={rgb, 255:red, 255; green, 255; blue, 255 }  ,fill opacity=1 ] (455.13,719) .. controls (455.13,704.43) and (466.93,692.63) .. (481.5,692.63) .. controls (496.07,692.63) and (507.88,704.43) .. (507.88,719) .. controls (507.88,733.57) and (496.07,745.38) .. (481.5,745.38) .. controls (466.93,745.38) and (455.13,733.57) .. (455.13,719) -- cycle ;
%Shape: Circle [id:dp6187140998954094] 
\draw  [fill={rgb, 255:red, 255; green, 255; blue, 255 }  ,fill opacity=1 ] (698.13,583) .. controls (698.13,568.43) and (709.93,556.63) .. (724.5,556.63) .. controls (739.07,556.63) and (750.88,568.43) .. (750.88,583) .. controls (750.88,597.57) and (739.07,609.38) .. (724.5,609.38) .. controls (709.93,609.38) and (698.13,597.57) .. (698.13,583) -- cycle ;
%Shape: Circle [id:dp8213369494464454] 
\draw  [fill={rgb, 255:red, 255; green, 255; blue, 255 }  ,fill opacity=1 ] (595.13,682) .. controls (595.13,667.43) and (606.93,655.63) .. (621.5,655.63) .. controls (636.07,655.63) and (647.88,667.43) .. (647.88,682) .. controls (647.88,696.57) and (636.07,708.38) .. (621.5,708.38) .. controls (606.93,708.38) and (595.13,696.57) .. (595.13,682) -- cycle ;
%Shape: Circle [id:dp5348288160294481] 
\draw  [fill={rgb, 255:red, 255; green, 255; blue, 255 }  ,fill opacity=1 ] (314.6,199.62) .. controls (314.51,185.05) and (326.25,173.17) .. (340.81,173.08) .. controls (355.38,172.98) and (367.26,184.72) .. (367.35,199.29) .. controls (367.44,213.85) and (355.71,225.73) .. (341.14,225.83) .. controls (326.58,225.92) and (314.69,214.18) .. (314.6,199.62) -- cycle ;
%Shape: Circle [id:dp9563490312142697] 
\draw  [fill={rgb, 255:red, 255; green, 255; blue, 255 }  ,fill opacity=1 ] (210.27,302.94) .. controls (210.18,288.38) and (221.91,276.49) .. (236.48,276.4) .. controls (251.04,276.31) and (262.92,288.04) .. (263.02,302.61) .. controls (263.11,317.18) and (251.37,329.06) .. (236.81,329.15) .. controls (222.24,329.24) and (210.36,317.51) .. (210.27,302.94) -- cycle ;
%Shape: Circle [id:dp2716964489064747] 
\draw  [fill={rgb, 255:red, 255; green, 255; blue, 255 }  ,fill opacity=1 ] (175.13,439.72) .. controls (175.04,425.15) and (186.77,413.27) .. (201.34,413.18) .. controls (215.9,413.09) and (227.78,424.82) .. (227.88,439.39) .. controls (227.97,453.95) and (216.23,465.84) .. (201.67,465.93) .. controls (187.1,466.02) and (175.22,454.29) .. (175.13,439.72) -- cycle ;
%Shape: Circle [id:dp7068930916084409] 
\draw  [fill={rgb, 255:red, 255; green, 255; blue, 255 }  ,fill opacity=1 ] (210.03,582.94) .. controls (209.93,568.37) and (221.67,556.49) .. (236.24,556.4) .. controls (250.8,556.3) and (262.68,568.04) .. (262.78,582.61) .. controls (262.87,597.17) and (251.13,609.05) .. (236.57,609.15) .. controls (222,609.24) and (210.12,597.5) .. (210.03,582.94) -- cycle ;
%Shape: Circle [id:dp4399860097766264] 
\draw  [fill={rgb, 255:red, 255; green, 255; blue, 255 }  ,fill opacity=1 ] (312.65,682.58) .. controls (312.56,668.02) and (324.29,656.13) .. (338.86,656.04) .. controls (353.43,655.95) and (365.31,667.68) .. (365.4,682.25) .. controls (365.49,696.82) and (353.76,708.7) .. (339.19,708.79) .. controls (324.62,708.88) and (312.74,697.15) .. (312.65,682.58) -- cycle ;
%Shape: Arc [id:dp8384178408176507] 
\draw  [draw opacity=0] (184.52,421.19) .. controls (184.52,421.19) and (184.52,421.19) .. (184.52,421.19) .. controls (125.3,386.99) and (106.92,307.96) .. (143.46,244.67) .. controls (180.01,181.37) and (257.64,157.77) .. (316.86,191.96) -- (250.69,306.57) -- cycle ; \draw   (184.52,421.19) .. controls (184.52,421.19) and (184.52,421.19) .. (184.52,421.19) .. controls (125.3,386.99) and (106.92,307.96) .. (143.46,244.67) .. controls (180.01,181.37) and (257.64,157.77) .. (316.86,191.96) ;
%Shape: Arc [id:dp8107698203537117] 
\draw  [draw opacity=0] (347.64,174.46) .. controls (347.64,174.46) and (347.64,174.46) .. (347.64,174.46) .. controls (347.64,106.08) and (406.89,50.65) .. (479.98,50.65) .. controls (553.07,50.65) and (612.33,106.08) .. (612.33,174.46) -- (479.98,174.46) -- cycle ; \draw   (347.64,174.46) .. controls (347.64,174.46) and (347.64,174.46) .. (347.64,174.46) .. controls (347.64,106.08) and (406.89,50.65) .. (479.98,50.65) .. controls (553.07,50.65) and (612.33,106.08) .. (612.33,174.46) ;
%Shape: Arc [id:dp8324498616737684] 
\draw  [draw opacity=0] (645.81,192.85) .. controls (645.81,192.85) and (645.81,192.85) .. (645.81,192.85) .. controls (705.03,158.66) and (782.67,182.26) .. (819.21,245.56) .. controls (855.76,308.85) and (837.38,387.89) .. (778.15,422.08) -- (711.98,307.46) -- cycle ; \draw   (645.81,192.85) .. controls (645.81,192.85) and (645.81,192.85) .. (645.81,192.85) .. controls (705.03,158.66) and (782.67,182.26) .. (819.21,245.56) .. controls (855.76,308.85) and (837.38,387.89) .. (778.15,422.08) ;
%Shape: Arc [id:dp3508937601542583] 
\draw  [draw opacity=0] (779.15,459.85) .. controls (779.15,459.85) and (779.15,459.85) .. (779.15,459.85) .. controls (838.38,494.04) and (856.76,573.07) .. (820.21,636.37) .. controls (783.67,699.67) and (706.03,723.27) .. (646.81,689.08) -- (712.98,574.46) -- cycle ; \draw   (779.15,459.85) .. controls (779.15,459.85) and (779.15,459.85) .. (779.15,459.85) .. controls (838.38,494.04) and (856.76,573.07) .. (820.21,636.37) .. controls (783.67,699.67) and (706.03,723.27) .. (646.81,689.08) ;
%Shape: Arc [id:dp2130246977872029] 
\draw  [draw opacity=0] (613.33,706.46) .. controls (613.33,706.46) and (613.33,706.46) .. (613.33,706.46) .. controls (613.33,774.85) and (554.07,830.28) .. (480.98,830.28) .. controls (407.89,830.28) and (348.64,774.85) .. (348.64,706.46) -- (480.98,706.46) -- cycle ; \draw   (613.33,706.46) .. controls (613.33,706.46) and (613.33,706.46) .. (613.33,706.46) .. controls (613.33,774.85) and (554.07,830.28) .. (480.98,830.28) .. controls (407.89,830.28) and (348.64,774.85) .. (348.64,706.46) ;
%Shape: Arc [id:dp5271785330979194] 
\draw  [draw opacity=0] (313.15,687.8) .. controls (253.93,721.99) and (176.3,698.39) .. (139.76,635.09) .. controls (103.21,571.79) and (121.59,492.76) .. (180.81,458.57) -- (246.98,573.18) -- cycle ; \draw   (313.15,687.8) .. controls (253.93,721.99) and (176.3,698.39) .. (139.76,635.09) .. controls (103.21,571.79) and (121.59,492.76) .. (180.81,458.57) ;
%Shape: Arc [id:dp6254178374857761] 
\draw  [draw opacity=0] (231.08,277.74) .. controls (196.89,218.52) and (220.48,140.89) .. (283.78,104.35) .. controls (347.08,67.8) and (426.11,86.18) .. (460.3,145.4) -- (345.69,211.57) -- cycle ; \draw   (231.08,277.74) .. controls (196.89,218.52) and (220.48,140.89) .. (283.78,104.35) .. controls (347.08,67.8) and (426.11,86.18) .. (460.3,145.4) ;
%Shape: Arc [id:dp292078914571104] 
\draw  [draw opacity=0] (502.08,146.4) .. controls (502.08,146.4) and (502.08,146.4) .. (502.08,146.4) .. controls (536.27,87.18) and (615.3,68.8) .. (678.6,105.35) .. controls (741.9,141.89) and (765.49,219.52) .. (731.3,278.74) -- (616.69,212.57) -- cycle ; \draw   (502.08,146.4) .. controls (502.08,146.4) and (502.08,146.4) .. (502.08,146.4) .. controls (536.27,87.18) and (615.3,68.8) .. (678.6,105.35) .. controls (741.9,141.89) and (765.49,219.52) .. (731.3,278.74) ;
%Shape: Arc [id:dp4570665203847646] 
\draw  [draw opacity=0] (749.69,310.23) .. controls (749.69,310.23) and (749.69,310.23) .. (749.69,310.23) .. controls (818.07,310.23) and (873.5,369.48) .. (873.5,442.57) .. controls (873.5,515.66) and (818.07,574.92) .. (749.69,574.92) -- (749.69,442.57) -- cycle ; \draw   (749.69,310.23) .. controls (749.69,310.23) and (749.69,310.23) .. (749.69,310.23) .. controls (818.07,310.23) and (873.5,369.48) .. (873.5,442.57) .. controls (873.5,515.66) and (818.07,574.92) .. (749.69,574.92) ;
%Shape: Arc [id:dp6814698995651807] 
\draw  [draw opacity=0] (724.5,609.38) .. controls (724.5,609.38) and (724.5,609.38) .. (724.5,609.38) .. controls (758.69,668.6) and (735.09,746.23) .. (671.8,782.77) .. controls (608.5,819.32) and (529.47,800.94) .. (495.28,741.72) -- (609.89,675.55) -- cycle ; \draw   (724.5,609.38) .. controls (724.5,609.38) and (724.5,609.38) .. (724.5,609.38) .. controls (758.69,668.6) and (735.09,746.23) .. (671.8,782.77) .. controls (608.5,819.32) and (529.47,800.94) .. (495.28,741.72) ;
%Shape: Arc [id:dp0026922286610074897] 
\draw  [draw opacity=0] (463.79,737.49) .. controls (429.6,796.71) and (350.57,815.09) .. (287.27,778.54) .. controls (223.97,742) and (200.38,664.37) .. (234.57,605.15) -- (349.18,671.32) -- cycle ; \draw   (463.79,737.49) .. controls (429.6,796.71) and (350.57,815.09) .. (287.27,778.54) .. controls (223.97,742) and (200.38,664.37) .. (234.57,605.15) ;
%Shape: Arc [id:dp4716267401469273] 
\draw  [draw opacity=0] (212.18,575.66) .. controls (143.8,575.66) and (88.36,516.41) .. (88.36,443.32) .. controls (88.36,370.23) and (143.8,310.97) .. (212.18,310.97) -- (212.18,443.32) -- cycle ; \draw   (212.18,575.66) .. controls (143.8,575.66) and (88.36,516.41) .. (88.36,443.32) .. controls (88.36,370.23) and (143.8,310.97) .. (212.18,310.97) ;
%Straight Lines [id:da41902826642112656] 
\draw    (498,182) -- (621.5,655.63) ;
%Straight Lines [id:da04149862697240092] 
\draw    (625,224) -- (498,699) ;
%Straight Lines [id:da49825497869524815] 
\draw    (711,325) -- (362,670) ;
%Straight Lines [id:da7370966816055877] 
\draw    (738,454) -- (262.78,582.61) ;
%Straight Lines [id:da5631995473472099] 
\draw    (698.13,583) -- (226,450) ;
%Straight Lines [id:da9495477195293189] 
\draw    (600,667) -- (248,326) ;
%Straight Lines [id:da1250275077453593] 
\draw    (467,699) -- (341.14,225.83) ;
%Straight Lines [id:da3387625727139616] 
\draw    (338.86,656.04) -- (463,178) ;
%Straight Lines [id:da9569233488098] 
\draw    (253,562) -- (594,206) ;
%Straight Lines [id:da5613787504165991] 
\draw    (222,423) -- (697,298) ;
%Straight Lines [id:da7420295309744931] 
\draw    (263.02,302.61) -- (740,423) ;
%Straight Lines [id:da4197686425470478] 
\draw    (365,206) -- (712,560) ;
\draw   (418.15,178.47) .. controls (410.15,174.21) and (402.78,172.32) .. (396.01,172.82) .. controls (402.16,169.95) and (407.67,164.7) .. (412.58,157.08) ;
\draw   (558.59,182.5) .. controls (553.68,174.89) and (548.16,169.65) .. (542.01,166.79) .. controls (548.77,167.28) and (556.15,165.38) .. (564.14,161.11) ;
\draw   (678.59,258.16) .. controls (678.43,249.1) and (676.5,241.73) .. (672.79,236.06) .. controls (678.28,240.04) and (685.54,242.31) .. (694.59,242.9) ;
\draw   (295.45,246.33) .. controls (286.4,245.92) and (278.93,247.38) .. (273.03,250.72) .. controls (277.35,245.5) and (280.08,238.39) .. (281.25,229.4) ;
\draw   (224.32,358.26) .. controls (216.4,362.67) and (210.81,367.84) .. (207.56,373.79) .. controls (208.49,367.08) and (207.07,359.6) .. (203.32,351.34) ;
\draw   (217.91,499.86) .. controls (213.62,507.85) and (211.71,515.22) .. (212.17,521.98) .. controls (209.33,515.83) and (204.1,510.29) .. (196.5,505.36) ;
\draw   (281.17,621.47) .. controls (281.1,630.53) and (282.83,637.94) .. (286.4,643.71) .. controls (281.02,639.59) and (273.81,637.13) .. (264.79,636.31) ;
\draw   (400.68,697.66) .. controls (405.08,705.58) and (410.24,711.18) .. (416.18,714.44) .. controls (409.47,713.51) and (401.98,714.91) .. (393.72,718.64) ;
\draw   (542.37,701.91) .. controls (550.3,706.3) and (557.64,708.32) .. (564.41,707.94) .. controls (558.21,710.7) and (552.61,715.86) .. (547.58,723.39) ;
\draw   (661.81,640.45) .. controls (670.87,640.13) and (678.2,638.08) .. (683.81,634.27) .. controls (679.92,639.83) and (677.77,647.13) .. (677.33,656.19) ;
\draw   (737.44,522.13) .. controls (745.08,517.25) and (750.36,511.76) .. (753.25,505.63) .. controls (752.73,512.39) and (754.59,519.77) .. (758.81,527.79) ;
\draw   (744.03,383.43) .. controls (748.6,375.6) and (750.78,368.3) .. (750.55,361.53) .. controls (753.17,367.78) and (758.2,373.5) .. (765.62,378.7) ;
\draw   (521.45,225.24) .. controls (516.82,233.03) and (514.57,240.3) .. (514.74,247.08) .. controls (512.18,240.81) and (507.2,235.05) .. (499.82,229.78) ;
\draw   (622.9,274.39) .. controls (615.28,279.31) and (610.04,284.83) .. (607.18,290.97) .. controls (607.67,284.21) and (605.77,276.84) .. (601.51,268.84) ;
\draw   (684.56,367.24) .. controls (675.51,367.5) and (668.16,369.5) .. (662.53,373.27) .. controls (666.45,367.74) and (668.65,360.45) .. (669.15,351.4) ;
\draw   (697.25,476.31) .. controls (689.2,472.16) and (681.8,470.36) .. (675.05,470.94) .. controls (681.16,468) and (686.6,462.68) .. (691.41,454.99) ;
\draw   (648.95,580.82) .. controls (644.41,572.98) and (639.15,567.48) .. (633.14,564.32) .. controls (639.87,565.14) and (647.33,563.59) .. (655.52,559.72) ;
\draw   (559.66,644.16) .. controls (559.46,635.1) and (557.5,627.75) .. (553.76,622.09) .. controls (559.26,626.05) and (566.54,628.29) .. (575.59,628.84) ;
\draw   (443.97,659.38) .. controls (448.58,651.58) and (450.8,644.3) .. (450.6,637.52) .. controls (453.19,643.78) and (458.19,649.53) .. (465.59,654.76) ;
\draw   (339.54,610.31) .. controls (347.11,605.33) and (352.31,599.77) .. (355.12,593.6) .. controls (354.69,600.36) and (356.65,607.72) .. (360.98,615.68) ;
\draw   (277.79,521.5) .. controls (286.8,520.55) and (293.97,517.99) .. (299.3,513.79) .. controls (295.81,519.61) and (294.18,527.04) .. (294.38,536.1) ;
\draw   (262.05,401.4) .. controls (270.14,405.48) and (277.56,407.21) .. (284.31,406.56) .. controls (278.23,409.56) and (272.83,414.93) .. (268.1,422.66) ;
\draw   (308.73,301.64) .. controls (313.1,309.58) and (318.24,315.19) .. (324.17,318.48) .. controls (317.46,317.52) and (309.97,318.89) .. (301.69,322.59) ;
\draw   (403.95,230.4) .. controls (404.62,239.43) and (406.96,246.68) .. (410.99,252.13) .. controls (405.28,248.47) and (397.9,246.6) .. (388.84,246.52) ;

% Text Node
\draw (450,145) node [anchor=north west][inner sep=0.75pt]    {$\ 0$};
% Text Node
\draw (590,180) node [anchor=north west][inner sep=0.75pt]    {$\ 1$};
% Text Node
\draw (710,288) node [anchor=north west][inner sep=0.75pt]    {$2$};
% Text Node
\draw (732,425) node [anchor=north west][inner sep=0.75pt]    {$\ 3$};
% Text Node
\draw (450,702) node [anchor=north west][inner sep=0.75pt]    {$\ 6$};
% Text Node
\draw (710,567) node [anchor=north west][inner sep=0.75pt]    {$4$};
% Text Node
\draw (608,665) node [anchor=north west][inner sep=0.75pt]    {$5$};
% Text Node
\draw (300,180) node [anchor=north west][inner sep=0.75pt]  [rotate=-359.64]  {$\ 11$};
% Text Node
\draw (208,285) node [anchor=north west][inner sep=0.75pt]  [rotate=-359.64]  {$10$};
% Text Node
\draw (175,423) node [anchor=north west][inner sep=0.75pt]  [rotate=-359.64]  {$\ 9$};
% Text Node
\draw (220,562) node [anchor=north west][inner sep=0.75pt]  [rotate=-359.64]  {$8$};
% Text Node
\draw (325,665) node [anchor=north west][inner sep=0.75pt]  [rotate=-359.64]  {$7$};
\end{tikzpicture}
%%%%%%%%%%%%
\caption{$S=\{ 2,5,10,11\}$} \label{b}
\end{subfigure}
\caption{The graph $Circ(\mathbb{Z}_{12}, S)$.}\label{main}
\end{figure}
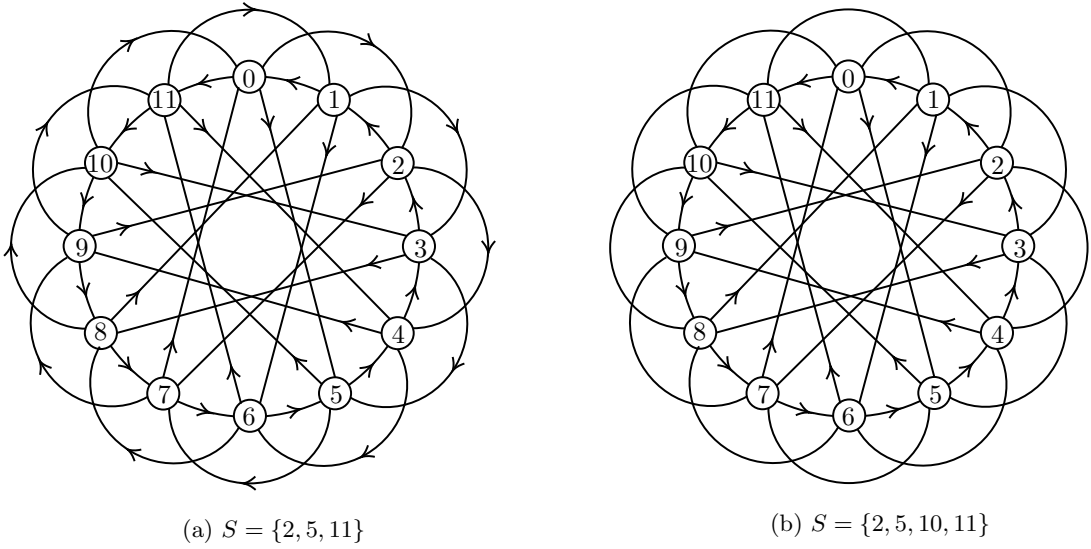

\begin{lema}\label{CharaNewIntegSum} Let $S$ be a skew-symmetric subset of $\mathbb{Z}_n$ and $t (\neq 0)\in \mathbb{Q}$. If $\sum\limits_{k\in S} it\sqrt{3}(\omega_n^{jk}-\omega_n^{-jk})$ is an integer for each $j\in \{0,...,n-1\}$ then 
\begin{equation*}
			S= \left\{ \begin{array}{ll}
				\emptyset & \text{ if } n\not\equiv 0\Mod 3 \\ 
				\bigcup\limits_{d\in \mathscr{D}}S_n(d) & \text{ if } n\equiv 0\Mod 3,
			\end{array}\right.
		\end{equation*}
		where $\mathscr{D} \subseteq \{ d: d\mid \frac{n}{3}\}$ and $S_n(d)\in \{ G_{n,3}^{1}(d) , G_{n,3}^{2}(d)\}$. 
	\end{lema}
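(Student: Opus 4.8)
The plan is to treat the cases $n\not\equiv 0\Mod 3$ and $n\equiv 0\Mod 3$ separately, mimicking the arguments of Theorem~\ref{CharaOfOrieCayGraphNot3} and Theorem~\ref{CharaOfOrieCayGraph} respectively. Write $\beta_j=\sum_{k\in S} it\sqrt{3}(\omega_n^{jk}-\omega_n^{-jk})$, so that by hypothesis $\beta_j\in\mathbb{Z}$ for every $j$. If $n\not\equiv 0\Mod 3$, I would argue as in Theorem~\ref{CharaOfOrieCayGraphNot3}: since $t\in\mathbb{Q}$ and $i\sqrt3=2\omega_6-1\in\mathbb{Q}(\omega_6)$, for each fixed $j$ the number $\omega_n$ is a root of the polynomial
\[
q(x)=\sum_{k\in S} it\sqrt3\,x^{jk}-\sum_{k\in S} it\sqrt3\,x^{j(n-k)}-\beta_j\in\mathbb{Q}(\omega_6)[x].
\]
As $\Phi_n(x)$ is irreducible over $\mathbb{Q}(\omega_6)$ when $n\not\equiv 0\Mod 3$, it divides $q(x)$, so $\omega_n^{-1}=\omega_n^{n-1}$ is also a root of $q$, and evaluating there gives $-2\beta_j=0$. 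Thus $\beta_j=0$ for all $j$; dividing by $t\neq 0$ and applying Lemma~\ref{Sqrt3ZeroSum} forces $S=\emptyset$.

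For $n\equiv 0\Mod 3$ I would run the vector argument from the proof of Theorem~\ref{CharaOfOrieCayGraph}. Define the $n$-vector $v$, with coordinates indexed by $\mathbb{Z}_n$, by $v_k=it\sqrt3$ if $k\in S$, $v_k=-it\sqrt3$ if $n-k\in S$, and $v_k=0$ otherwise; this is well defined because $S$ is skew-symmetric, and $v\in\mathbb{Q}^n(\omega_3)$ since $it\sqrt3=t(2\omega_3+1)$. With $E=[\omega_n^{st}]$ as in Lemma~\ref{MainLemmNeceCond}, the $j$-th coordinate of $Ev$ equals $\sum_{k\in S} it\sqrt3(\omega_n^{jk}-\omega_n^{-jk})=\beta_j\in\mathbb{Z}$, so $Ev\in\mathbb{Q}^n$ and Lemma~\ref{MainLemmNeceCond} applies to $v$. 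Its part~(iii) says that if $d\mid n$, $d<n$ and $d\nmid\frac n3$, then $v_r=v_{n-r}$ for every $r\in G_n(d)$, which by the definition of $v$ is impossible unless $r\notin S\cup S^{-1}$; hence $G_n(d)\cap(S\cup S^{-1})=\emptyset$ for all such $d$. Since $\mathbb{Z}_n\setminus\{0\}$ is the disjoint union of the sets $G_n(d)$ over proper divisors $d$ of $n$, and $0\notin S$, we get $S\subseteq\bigcup_{d\mid\frac n3}G_n(d)$.

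To finish I would pin down $S\cap G_n(d)$ for each $d\mid\frac n3$, using that $G_n(d)=G_{n,3}^1(d)\cup G_{n,3}^2(d)$ is a disjoint union by Lemma~\ref{SecLemmaSetEqua}(ii) and that $(G_{n,3}^1(d))^{-1}=G_{n,3}^2(d)$ (negation modulo $n$ preserves the gcd with $n$, and since $\frac nd\equiv 0\Mod 3$ it sends residue $1$ to residue $2$ modulo $3$). If $r\in S\cap G_{n,3}^1(d)$, then $v_r=it\sqrt3\neq 0$, so Part~(ii) of Lemma~\ref{MainLemmNeceCond} forces $v_s=it\sqrt3$, i.e.\ $s\in S$, for every $s\in G_{n,3}^1(d)$; hence $G_{n,3}^1(d)\subseteq S$. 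Symmetrically, if $r\in S\cap G_{n,3}^2(d)$, then $n-r\in G_{n,3}^1(d)$ with $v_{n-r}=-it\sqrt3$, so Part~(ii) gives $v_s=-it\sqrt3$ for all $s\in G_{n,3}^1(d)$, i.e.\ $G_{n,3}^2(d)=(G_{n,3}^1(d))^{-1}\subseteq S$. Moreover $S$ cannot meet both $G_{n,3}^1(d)$ and $G_{n,3}^2(d)$, for then the nonempty symmetric set $G_n(d)$ would be contained in the skew-symmetric set $S$. Therefore $S\cap G_n(d)\in\{\emptyset,G_{n,3}^1(d),G_{n,3}^2(d)\}$, and with $\mathscr D=\{d\mid\frac n3:S\cap G_n(d)\neq\emptyset\}$ this yields the asserted form of $S$.

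The routine part is the case $n\not\equiv 0\Mod 3$: once one notices that $q(x)$ has coefficients in $\mathbb{Q}(\omega_6)$, irreducibility of $\Phi_n$ over $\mathbb{Q}(\omega_6)$ does all the work, exactly as in Theorem~\ref{CharaOfOrieCayGraphNot3}. The main obstacle will be the case $n\equiv 0\Mod 3$: one must extract the global structure of $S$ from only the ``local'' constancy and symmetry information supplied by Lemma~\ref{MainLemmNeceCond}, and the delicate point is combining $(G_{n,3}^1(d))^{-1}=G_{n,3}^2(d)$ with the skew-symmetry of $S$ to exclude the mixed possibility $S\cap G_n(d)=G_n(d)$.
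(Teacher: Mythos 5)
Your proposal is correct and follows essentially the same route as the paper: for $n\not\equiv 0\Mod 3$ the irreducibility of $\Phi_n(x)$ over $\mathbb{Q}(\omega_6)$ forces each sum to vanish so that Lemma~\ref{Sqrt3ZeroSum} applies, and for $n\equiv 0\Mod 3$ the same vector $v$ with entries $\pm it\sqrt{3}$ is fed into Lemma~\ref{MainLemmNeceCond}. The only difference is that you spell out the extraction of the structure of $S$ from parts (i)--(iii) (including the exclusion of $G_n(d)\subseteq S$ via skew-symmetry), which the paper leaves implicit here because it already carried out that argument verbatim in the proof of Theorem~\ref{CharaOfOrieCayGraph}.
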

	\begin{proof}
 Let $ \alpha_j = \sum\limits_{k\in S} it\sqrt{3}(\omega_n^{jk}-\omega_n^{-jk})$ for each $j\in \{0,...,n-1\}$. Assume that $n\not\equiv 0 \Mod 3$, so that $\Phi_n(x)$ is irreducible in $\mathbb{Q}(\omega_3)[x]$. Then $\omega_n^j$ is a root of  $p(x)= \sum\limits_{k\in S} it\sqrt{3}( x^{k} - x^{n-k})- \alpha_j \in \mathbb{Q}(\omega_3)[x]$. Therefore $p(x)$ is a multiple of the irreducible polynomial $\Phi_n(x)$, and so $\omega_n^{-j}=\omega_n^{n-j}$ is also a root of $p(x)$, that is, $\alpha_j=\alpha_{n-j}$. Thus $\alpha_j=-\alpha_{n-j}$ and $\alpha_j=\alpha_{n-j}$ implies that $\alpha_j=\alpha_{n-j}=0$. By Lemma \ref{Sqrt3ZeroSum}, we get $S=\emptyset$.  
			
Assume that $n\equiv 0 \Mod 3$. Let $v$ be the vector of length $n$ defined by
			$$v_k= \left\{ \begin{array}{rl}
				it\sqrt{3} & \mbox{if }  k\in S \\
				-it\sqrt{3} & \mbox{if }  n-k\in S\\ 
				0 &  \mbox{otherwise}. 
			\end{array}\right.$$ 
Since $v\in \mathbb{Q}^n(\omega_3)$ and the $j$-th coordinate of $Ev$ is $\alpha_j$, we have $Ev \in \mathbb{Q}^n$. Thus $v$ satisfies all the conditions of Lemma~\ref{MainLemmNeceCond}. Hence 
\begin{equation*}
			S= \left\{ \begin{array}{ll}
				\emptyset & \text{ if } n\not\equiv 0\Mod 3 \\ 
				\bigcup\limits_{d\in \mathscr{D}}S_n(d) & \text{ if } n\equiv 0\Mod 3,
			\end{array}\right.
		\end{equation*}
where $\mathscr{D} \subseteq \{ d: d\mid \frac{n}{3}\}$ and $S_n(d)\in \{ G_{n,3}^{1}(d) , G_{n,3}^{2}(d)\}$. 
\end{proof}
	
\begin{lema}\label{Sqrt3NecessIntSum} Let $S$ be a skew-symmetric subset of $\mathbb{Z}_n$ and $t (\neq 0)\in \mathbb{Q}$. If $\sum\limits_{k\in S}it\sqrt{3}(\omega_n^{jk}-\omega_n^{-jk})$ is an integer for each $j\in \{0,...,n-1\}$ then $\sum\limits_{k\in S\cup S^{-1}} \omega_n^{jk} \in \mathbb{Z}$  for each $j\in \{0,...,n-1\}$.
	\end{lema}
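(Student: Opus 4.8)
The plan is to reduce the statement to the structural description of $S$ furnished by Lemma~\ref{CharaNewIntegSum} and then to invoke Wasin So's characterization of integral circulant graphs (Theorem~\ref{2006integral}). The hypothesis here is verbatim the hypothesis of Lemma~\ref{CharaNewIntegSum}, so the first step is simply to apply that lemma: either $n\not\equiv 0\Mod 3$ and $S=\emptyset$, or $n\equiv 0\Mod 3$ and $S=\bigcup_{d\in\mathscr{D}}S_n(d)$ for some $\mathscr{D}\subseteq\{d:d\mid\frac{n}{3}\}$ with each $S_n(d)\in\{G_{n,3}^1(d),G_{n,3}^2(d)\}$.

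If $n\not\equiv 0\Mod 3$ then $S\cup S^{-1}=\emptyset$ and every sum $\sum_{k\in S\cup S^{-1}}\omega_n^{jk}$ equals $0\in\mathbb{Z}$, so there is nothing more to do. Now assume $n\equiv 0\Mod 3$. I would use the identity $\left(G_{n,3}^1(d)\right)^{-1}=G_{n,3}^2(d)$, valid whenever $d\mid\frac{n}{3}$ (this was already observed in the proof of Corollary~\ref{SuffiCondMainTheoCoroNew}, and follows because the inverse of $dk$ in $\mathbb{Z}_n$ is $d(3g-k)$ with $g=\frac{n}{3d}$, which preserves the $\gcd$ with $n$ and sends residue $1$ to residue $2$ modulo $3$). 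Consequently $S_n(d)\cup\left(S_n(d)\right)^{-1}=G_{n,3}^1(d)\cup G_{n,3}^2(d)$ no matter which of the two candidates $S_n(d)$ happens to be, and Lemma~\ref{SecLemmaSetEqua}(ii) rewrites this as $G_n(d)$. Taking the union over $d\in\mathscr{D}$ gives $S\cup S^{-1}=\bigcup_{d\in\mathscr{D}}G_n(d)$; note that $S\cup S^{-1}$ is symmetric and does not contain $0$, so $\text{Circ}(\mathbb{Z}_n,S\cup S^{-1})$ is a bona fide simple circulant graph.

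Finally, since $\mathscr{D}\subseteq\{d:d\mid\frac{n}{3}\}\subseteq\{d:d\mid n\}$, Theorem~\ref{2006integral} applies and shows that $\text{Circ}(\mathbb{Z}_n,S\cup S^{-1})$ is integral; its eigenvalues are exactly $\sum_{k\in S\cup S^{-1}}\omega_n^{jk}$ for $j\in\{0,\ldots,n-1\}$, so these are integers, as required. I do not anticipate a genuine obstacle here: all the substantive work has been pushed into Lemma~\ref{CharaNewIntegSum} and the set-theoretic lemmas of Section~\ref{prel}; the only points needing a little care are the inverse identity for $G_{n,3}^1(d)$ and the observation that the divisors in $\mathscr{D}$ are divisors of $n$, which is precisely what makes Theorem~\ref{2006integral} applicable.
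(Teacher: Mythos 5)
Your proposal is correct and follows essentially the same route as the paper's own proof: apply Lemma~\ref{CharaNewIntegSum} to obtain the structure of $S$, observe that $S\cup S^{-1}=\bigcup_{d\in\mathscr{D}}G_n(d)$, and conclude via Theorem~\ref{2006integral}. The only difference is that you spell out the identity $\left(G_{n,3}^1(d)\right)^{-1}=G_{n,3}^2(d)$, which the paper uses implicitly.
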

	\begin{proof}
	By Lemma \ref{CharaNewIntegSum}, there is nothing to prove if $n\not\equiv 0 \Mod 3$. Assume  $n\equiv 0 \Mod 3$. Again by Lemma \ref{CharaNewIntegSum},  $S= \bigcup\limits_{d\in \mathscr{D}}S_n(d)$, where $\mathscr{D} \subseteq \{ d: d\mid \frac{n}{3}\}$ and $S_n(d)\in \{ G_{n,3}^{1}(d) , G_{n,3}^{2}(d)\}$. Therefore $S\cup S^{-1}= \bigcup\limits_{d\in \mathscr{D}}G_n(d)$. By Theorem \ref{2006integral}, the graph $\text{Circ}(\mathbb{Z}_n, S \cup S^{-1})$ is integral. Note that $\sum\limits_{k\in S\cup S^{-1}} \omega_n^{jk}$ is an eigenvalue of $\text{Circ}(\mathbb{Z}_n, S \cup S^{-1})$ for each $j\in \{0,...,n-1\}$. Hence $\sum\limits_{k\in S\cup S^{-1}} \omega_n^{jk} \in \mathbb{Z}$  for each $j\in \{0,...,n-1\}$.
	\end{proof}

	\begin{lema}\label{SeperatIntegMixedGraph}
Let $S\subseteq \mathbb{Z}_n$ such that $0\notin S$. Then the mixed circulant graph $\text{Circ}(\mathbb{Z}_n,S)$ is HS-integral if and only if both $\text{Circ}(\mathbb{Z}_n,{S\setminus \overline{S}})$ and $\text{Circ}(\mathbb{Z}_n, {\overline{S}})$ are HS-integral.
	\end{lema}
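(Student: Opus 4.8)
The plan is to read off from Lemma~\ref{SpecMixCayGraph} that the HS-eigenvalues of $\text{Circ}(\mathbb Z_n,S)$ are $\gamma_j=\lambda_j+\mu_j$, where $\{\lambda_j\}_{j}$ is the (HS-)spectrum of the simple circulant graph $\text{Circ}(\mathbb Z_n,S\setminus\overline S)$ and $\{\mu_j\}_{j}$ is the HS-spectrum of the oriented circulant graph $\text{Circ}(\mathbb Z_n,\overline S)$. The ``if'' part is then immediate, since $\lambda_j,\mu_j\in\mathbb Z$ gives $\gamma_j\in\mathbb Z$. For the ``only if'' part, suppose all $\gamma_j\in\mathbb Z$. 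It is enough to show all $\mu_j\in\mathbb Z$: then $\lambda_j=\gamma_j-\mu_j\in\mathbb Z$, so the simple circulant graph $\text{Circ}(\mathbb Z_n,S\setminus\overline S)$ is integral, hence HS-integral, and $\text{Circ}(\mathbb Z_n,\overline S)$ is HS-integral because, by Lemma~\ref{SpecMixCayGraph} applied with connection set $\overline S$, its HS-eigenvalues are exactly the $\mu_j$.

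For $n\not\equiv 0\Mod 3$ I would argue with cyclotomic irreducibility. Putting $P_j=\sum_{k\in\overline S}\omega_n^{jk}$, we have $\mu_j=\omega_6 P_j+\omega_6^5\overline{P_j}$, and $\gamma_j=f_j(\omega_n)$ with $f_j(x)=\sum_{k\in S\setminus\overline S}x^{jk}+\omega_6\sum_{k\in\overline S}x^{jk}+\omega_6^5\sum_{k\in\overline S}x^{j(n-k)}\in\mathbb Q(\omega_3)[x]$, while $f_j(\omega_n^{-1})=\lambda_j+\mu_{n-j}$. Since $\Phi_n(x)$ is irreducible over $\mathbb Q(\omega_3)$, it divides $f_j(x)-\gamma_j$, so $\omega_n^{-1}$ is also a root of $f_j(x)-\gamma_j$ and $\mu_j=\mu_{n-j}$ for every $j$; this says $i\sqrt3\sum_{k\in\overline S}(\omega_n^{jk}-\omega_n^{-jk})=0$ for all $j$, and Lemma~\ref{Sqrt3ZeroSum} (for the skew-symmetric set $\overline S$) forces $\overline S=\emptyset$, so all $\mu_j=0$.

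The main work is the case $n\equiv 0\Mod 3$. Here I would use the coefficient vector $c=(c_k)_{k\in\mathbb Z_n}$ of $\mathcal H\big(\text{Circ}(\mathbb Z_n,S)\big)$, namely $c_k=1$ on $S\setminus\overline S$, $c_k=\omega_6$ on $\overline S$, $c_k=\omega_6^5$ on $\overline S^{-1}$ and $c_k=0$ otherwise (the four index sets are pairwise disjoint). As in the proof of Lemma~\ref{SpecMixCayGraph}, $Ec$ is the vector of HS-eigenvalues $\gamma_j$, so $c\in\mathbb Q^n(\omega_3)$ and $Ec\in\mathbb Z^n\subseteq\mathbb Q^n$, and Lemma~\ref{MainLemmNeceCond} applies to $c$. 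By parts~(i) and~(iii) of that lemma, $c_r=c_{n-r}=\overline{c_r}$ is real for every $r\in G_n(d)$ whenever $d\mid n$, $d\nmid\frac n3$ and $d<n$, so such $r$ avoid $\overline S\cup\overline S^{-1}$; as the sets $G_n(d)$ with $d\mid n$, $d<n$, partition $\mathbb Z_n\setminus\{0\}$, this gives $\overline S\cup\overline S^{-1}\subseteq\bigcup_{d\mid n/3}G_n(d)=\bigcup_{d\mid n/3}\big(G_{n,3}^1(d)\cup G_{n,3}^2(d)\big)$ by Lemma~\ref{SecLemmaSetEqua}(ii). By part~(ii), $c$ is constant on each $G_{n,3}^1(d)$ with $d\mid\frac n3$, and (using $(G_{n,3}^1(d))^{-1}=G_{n,3}^2(d)$ and part~(i)) also on each $G_{n,3}^2(d)$; since $\overline S$ is skew-symmetric, each $G_n(d)=G_{n,3}^1(d)\sqcup G_{n,3}^2(d)$ either misses $\overline S\cup\overline S^{-1}$ entirely or contributes exactly one of its two halves to $\overline S$. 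Hence $\overline S=\bigcup_{d\in\mathscr D}S_n(d)$ with $\mathscr D\subseteq\{d:d\mid\frac n3\}$ and $S_n(d)\in\{G_{n,3}^1(d),G_{n,3}^2(d)\}$, so $\text{Circ}(\mathbb Z_n,\overline S)$ is HS-integral by Theorem~\ref{CharaOfOrieCayGraph} (equivalently, directly by Corollary~\ref{SuffiCondMainTheoCoroNew}), i.e.\ all $\mu_j\in\mathbb Z$.

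I expect the obstacle to be the case $n\equiv 0\Mod 3$: there $\Phi_n$ is reducible over $\mathbb Q(\omega_3)$, so the short argument of the previous case fails, and one has to push the coefficient vector through Lemma~\ref{MainLemmNeceCond} and convert its three conclusions --- together with the combinatorial identities $G_n(d)=G_{n,3}^1(d)\sqcup G_{n,3}^2(d)$ and $(G_{n,3}^1(d))^{-1}=G_{n,3}^2(d)$ --- into the stated structure of $\overline S$; the bookkeeping of which half of each pair lands in $\overline S$ is where care is needed.
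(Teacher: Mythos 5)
Your proposal is correct, but the necessity direction follows a genuinely different route from the paper's. The paper does not case-split on $n \bmod 3$ at all: it forms the purely imaginary combination $\gamma_j-\gamma_{n-j}=\sum_{k\in\overline{S}}i\sqrt{3}(\omega_n^{jk}-\omega_n^{-jk})\in\mathbb{Z}$, feeds it to Lemma~\ref{Sqrt3NecessIntSum} to get $\sum_{k\in\overline{S}\cup\overline{S}^{-1}}\omega_n^{jk}\in\mathbb{Z}$, and then writes $\mu_j=\frac{1}{2}\sum_{k\in\overline{S}\cup\overline{S}^{-1}}\omega_n^{jk}+\frac{1}{2}\sum_{k\in\overline{S}}i\sqrt{3}(\omega_n^{jk}-\omega_n^{-jk})$, concluding that $\mu_j$ is a rational algebraic integer and hence an integer. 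You instead handle $n\not\equiv 0\Mod 3$ by re-running the irreducibility argument of Theorem~\ref{CharaOfOrieCayGraphNot3} with the symmetric part carried along (the cancellation $\lambda_j=\lambda_{n-j}$ makes this work), and for $n\equiv 0\Mod 3$ you apply Lemma~\ref{MainLemmNeceCond} directly to the full coefficient vector of $\mathcal{H}$ (entries $1,\omega_6,\omega_6^5,0$) rather than to a vector supported only on $\overline{S}$; this pins down the structure of $\overline{S}$ and lets you quote the sufficiency result (Corollary~\ref{SuffiCondMainTheoCoroNew}) to get $\mu_j\in\mathbb{Z}$. Both arguments ultimately rest on Lemma~\ref{MainLemmNeceCond}; the paper's version is more modular (it keeps this lemma as a pure separation statement and defers the structure of $\overline{S}$ to Theorem~\ref{CharaOfIntegMixedGraph}), while yours is more self-contained but essentially re-proves the necessity half of Theorem~\ref{CharaOfOrieCayGraph} inside this lemma. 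One harmless remark: you invoke Lemma~\ref{MainLemmNeceCond}(ii) for all $d\mid\frac{n}{3}$ although its statement reads $d\mid\frac{n}{6}$; the paper's own proof of that part (and its use in Theorem~\ref{CharaOfOrieCayGraph}) establishes and uses it for $d\mid\frac{n}{3}$, so this is an inconsistency in the paper's statement rather than a gap in your argument.
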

	\begin{proof}
	Assume that the mixed circulant graph $\text{Circ}(\mathbb{Z}_n,S)$ is HS-integral. Let the HS-spectrum of $\text{Circ}(\mathbb{Z}_n,S)$ be $\{\gamma_0,\gamma_1,...,\gamma_{n-1} \}$, where $\gamma_j=\lambda_j+\mu_j$, 
	$$\lambda_j= \sum\limits_{k\in S\setminus \overline{S}} \omega_n^{jk} \text{ and } \mu_j=  \sum\limits_{k\in \overline{S}} (\omega_6\omega_n^{jk} + \omega_6^5\omega_n^{-jk}) \textnormal{ for each } j\in \{0,...,n-1\}.$$
By assumption $\gamma_j \in \mathbb{Z}$, and so $ \gamma_j - \gamma_{n-j}= \sum\limits_{k\in \overline{S}} i\sqrt{3}(\omega_n^{jk}-\omega_n^{-jk})  \in \mathbb{Z}$ for each $j\in \{0,...,n-1\}$. By Lemma \ref{Sqrt3NecessIntSum}, we get $\sum\limits_{ k \in \overline{S} \cup \overline{S}^{-1}}\omega_n^{jk} \in \mathbb{Z}$ for each $j\in \{0,...,n-1\}$. 
	 Since 
			\begin{align*}
	\mu_j= \frac{1}{2} \sum\limits_{ k \in \overline{S} \cup \overline{S}^{-1}}\omega_n^{jk} + \frac{1}{2} \sum\limits_{k\in \overline{S}} i\sqrt{3}(\omega_n^{jk}-\omega_n^{-jk}),
			\end{align*} 
the number $\mu_j$ is a rational algebraic integer for each $j\in \{0,...,n-1\}$. Thus $\text{Circ}(\mathbb{Z}_n,\overline{S})$ is HS-integral. Now we have $\gamma_j, \mu_j \in \mathbb{Z}$ for each $j\in \{0,...,n-1\}$, and so $\lambda_j = \gamma_j -\mu_j \in \mathbb{Z}$ for each $j\in \{0,...,n-1\}$. Thus $\text{Circ}(\mathbb{Z}_n,S\setminus \overline{S})$ is also HS-integral.
			
			Conversely, assume that both $\text{Circ}(\mathbb{Z}_n,S\setminus \overline{S})$ and $\text{Circ}(\mathbb{Z}_n, \overline{S})$ are HS-integral. Then Lemma \ref{SpecMixCayGraph} implies that $\text{Circ}(\mathbb{Z}_n,S)$ is HS-integral.
	\end{proof}

\begin{theorem}\label{CharaOfIntegMixedGraph}
Let $S\subseteq \mathbb{Z}_n$ such that $0\notin S$. Then the mixed circulant graph $\text{Circ}(\mathbb{Z}_n,S)$ is  HS-integral if and only if  $S \setminus \overline{S} = \bigcup\limits_{d\in \mathscr{D}_1}G_n(d)$ and
		\begin{equation*}
			\overline{S}= \left\{ \begin{array}{ll}
				\emptyset & \text{ if } n\not\equiv 0\Mod 3 \\ 
				\bigcup\limits_{d\in \mathscr{D}_2}S_n(d) & \text{ if } n\equiv 0\Mod 3,
			\end{array}\right.
		\end{equation*}
		where $\mathscr{D}_1 \subseteq \{ d: d\mid n\}$, $\mathscr{D}_2 \subseteq \{ d: d\mid \frac{n}{3}\}$, $\mathscr{D}_1 \cap \mathscr{D}_2 = \emptyset$ and $S_n(d)\in \{ G_{n,3}^{1}(d) , G_{n,3}^{2}(d)\}$.
	\end{theorem}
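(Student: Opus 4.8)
The plan is to reduce everything to the two ``pure'' cases already settled. By Lemma~\ref{SeperatIntegMixedGraph}, $\text{Circ}(\mathbb{Z}_n,S)$ is HS-integral if and only if both $\text{Circ}(\mathbb{Z}_n,S\setminus\overline{S})$ and $\text{Circ}(\mathbb{Z}_n,\overline{S})$ are HS-integral, so the first step is simply to invoke that lemma and then analyze the symmetric set $A:=S\setminus\overline{S}$ and the skew-symmetric set $B:=\overline{S}$ separately.

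For the symmetric part: since $A$ is symmetric, $\text{Circ}(\mathbb{Z}_n,A)$ is a simple circulant graph, so $\mathcal{A}=\mathcal{H}$ and HS-integrality is ordinary integrality; by Wasin So's Theorem~\ref{2006integral} this holds exactly when $A=\bigcup_{d\in\mathscr{D}_1}G_n(d)$ for some $\mathscr{D}_1\subseteq\{d:d\mid n\}$. For the skew-symmetric part: since $B$ is skew-symmetric, $\text{Circ}(\mathbb{Z}_n,B)$ is an oriented circulant graph, and Theorem~\ref{CharaOfOrieCayGraph} says it is HS-integral precisely when $B=\emptyset$ (if $n\not\equiv 0\Mod 3$) or $B=\bigcup_{d\in\mathscr{D}_2}S_n(d)$ with $\mathscr{D}_2\subseteq\{d:d\mid\frac{n}{3}\}$ and $S_n(d)\in\{G_{n,3}^1(d),G_{n,3}^2(d)\}$ (if $n\equiv 0\Mod 3$). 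Concatenating the two descriptions gives the claimed form of $S\setminus\overline{S}$ and $\overline{S}$, and conversely any $S$ of that form has its symmetric and skew-symmetric pieces HS-integral, whence $\text{Circ}(\mathbb{Z}_n,S)$ is HS-integral by Lemma~\ref{SeperatIntegMixedGraph}.

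The one point that is not a pure citation is the interaction clause $\mathscr{D}_1\cap\mathscr{D}_2=\emptyset$ and the check that $\mathscr{D}_1$ and $\mathscr{D}_2$ may be prescribed independently. For the forward direction, $A$ and $B$ are disjoint by the definition of $\overline{S}$, and by Lemma~\ref{SecLemmaSetEqua}(ii) we have $S_n(d)\subseteq G_{n,3}^1(d)\cup G_{n,3}^2(d)=G_n(d)$; hence if some $d$ lay in $\mathscr{D}_1\cap\mathscr{D}_2$ we would get $\emptyset\neq S_n(d)\subseteq G_n(d)\cap B\subseteq A\cap B=\emptyset$, a contradiction. For the converse, given disjoint $\mathscr{D}_1,\mathscr{D}_2$ set $A=\bigcup_{d\in\mathscr{D}_1}G_n(d)$, $B$ as above, and $S=A\cup B$; each $G_n(d)$ is symmetric (negation modulo $n$ preserves the condition $\gcd(\cdot,n)=d$), while $(G_{n,3}^1(d))^{-1}=G_{n,3}^2(d)$ (as noted in the proof of Corollary~\ref{SuffiCondMainTheoCoroNew}) makes $B$ skew-symmetric; since distinct $G_n(d)$'s are disjoint and $\mathscr{D}_1\cap\mathscr{D}_2=\emptyset$ we get $A\cap B=\emptyset$, and a short verification then shows $S\setminus\overline{S}=A$ and $\overline{S}=B$, so the two pieces match the hypotheses of Theorem~\ref{2006integral} and Theorem~\ref{CharaOfOrieCayGraph} respectively.

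I expect this last bookkeeping — confirming that prescribing $\mathscr{D}_1$ and $\mathscr{D}_2$ disjointly genuinely recovers the symmetric/skew-symmetric splitting $S=(S\setminus\overline{S})\cup\overline{S}$ — to be the only mildly delicate step; everything else is a direct assembly of Lemma~\ref{SeperatIntegMixedGraph}, Theorem~\ref{2006integral}, and Theorem~\ref{CharaOfOrieCayGraph}.
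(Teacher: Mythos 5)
Your proposal is correct and follows exactly the paper's route: split via Lemma~\ref{SeperatIntegMixedGraph} and then cite Theorem~\ref{2006integral} for the symmetric part and Theorem~\ref{CharaOfOrieCayGraph} for the skew-symmetric part. The only difference is that you spell out the $\mathscr{D}_1\cap\mathscr{D}_2=\emptyset$ bookkeeping (correctly), which the paper leaves implicit.
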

	\begin{proof} By Lemma~\ref{SeperatIntegMixedGraph}, $\text{Circ}(\mathbb{Z}_n,S)$ is HS-integral if and only if $\text{Circ}(\mathbb{Z}_n,S\setminus \overline{S})$ and $\text{Circ}(\mathbb{Z}_n, \overline{S})$ are HS-integral. Therefore the result follows from Theorem~\ref{2006integral} and Theorem~\ref{CharaOfOrieCayGraph}.
	\end{proof}
	
	The following example illustrates Theorem~\ref{CharaOfIntegMixedGraph}.

\begin{ex}\label{ex2}
Consider $\Gamma= \mathbb{Z}_{12}$ and $S=\{ 2,5,10,11\}$. The mixed graph $\text{Circ}(\mathbb{Z}_{12}, S)$ is shown in Figure~\ref{b}. We see that $G_{12,3}^2(1)=\{5,11\}$ and $G_{12}(2)=\{2,10\}$. Therefore $S=G_{12,3}^2(1) \cup G_{12}(2)$, and hence $\text{Circ}(\mathbb{Z}_{12}, S)$ is HS-integral. Further, using Corollary~\ref{SpecMixCayGraph}, the HS-eigenvalues of  $\text{Circ}(\mathbb{Z}_{12}, S)$ are obtained as
$$\gamma_j=2\cos\left(\frac{\pi j}{3}\right) + \cos\left(\frac{5\pi j}{6}\right) + \cos\left(\frac{11\pi j}{6}\right) - \sqrt{3} \left[ \sin\left(\frac{5\pi j}{6}\right) + \sin\left(\frac{11\pi j}{6}\right) \right],$$
$\text{for each } j \in \mathbb{Z}_{12}$. One can see that $\gamma_0=4, \gamma_1=1, \gamma_2=3, \gamma_3=-2, \gamma_4=1, \gamma_5=1, \gamma_6=0, \gamma_7=1,$ $ \gamma_8=-5, \gamma_9=-2, \gamma_{10}=-3$ and $\gamma_{11}=1$. Thus all the HS-eigenvalues of $\text{Circ}(\mathbb{Z}_{12}, S)$ are integers. \qed
\end{ex}

%%%%%%%%%%%%%%%%%%%%%%%%%%%%%%%%%%%%%%%%%%%%%%%%%%%%%%%%%%%%%%%%
	
\section{Characterization of Eisenstein integral mixed circulant graphs}\label{Eisenstein}

In 1918, Ramanujan \cite{ramanujan1918certain} introduced the sum, known as Ramanujan sum, defined by
	\begin{equation}\label{ramasum}
		\begin{split}
			C_n(q)= \sum_{a\in G_n(1)} \cos \left(\frac{2\pi a q}{n}\right)~~\text{for each }n,q \in \mathbb{N}.
		\end{split} 
	\end{equation} 

It is well known that $C_n(q)$ is an integer for all $n,q \in \mathbb{Z}$. For $n \equiv 0 \Mod 3$, define 
$$T_n(q)=\sum\limits_{a\in G_{n,3}^1(1)}i\sqrt{3}(\omega_n^{aq}- \omega_n^{-aq}) = \sum\limits_{a\in G_{n,3}^1(1)} -2 \sqrt{3} \sin \bigg( \frac{2\pi aq}{n} \bigg).$$

\begin{lema}\label{Tn(q)IsIntegerForAll}
Let $n \equiv 0 \Mod 3$. Then $T_n(q) \in \mathbb{Z}$ for all $n,q \in \mathbb{Z}$.
\end{lema}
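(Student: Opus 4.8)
The plan is to reduce the claim to two integrality facts already at hand: the Ramanujan sum $C_n(q)=\sum_{a\in G_n(1)}\cos(2\pi aq/n)$ is an integer for every $q$, and the sum $Z_n^1(q)=\sum_{a\in G_{n,3}^1(1)}(\omega_3\omega_n^{aq}+\omega_3^2\omega_n^{-aq})$ is an integer for each $q\in\{0,1,\dots,n-1\}$ by Lemma~\ref{SuffiCondMainTheo} (applied with $d=1$, which divides $\frac{n}{3}$ since $n\equiv 0\Mod 3$). Since $\omega_n^{aq}$ depends only on $q$ modulo $n$, it is enough to prove $T_n(q)\in\mathbb{Z}$ for $q\in\{0,1,\dots,n-1\}$.

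The key step is to split $Z_n^1(q)$ into a real-symmetric part and an imaginary-antisymmetric part. Substituting $\omega_3=\frac{-1+i\sqrt{3}}{2}$ and $\omega_3^2=\frac{-1-i\sqrt{3}}{2}$, I would write
\begin{align*}
2Z_n^1(q) &= \sum_{a\in G_{n,3}^1(1)}\Big[(-1+i\sqrt{3})\,\omega_n^{aq}+(-1-i\sqrt{3})\,\omega_n^{-aq}\Big]\\
&= -\sum_{a\in G_{n,3}^1(1)}\big(\omega_n^{aq}+\omega_n^{-aq}\big)+\sum_{a\in G_{n,3}^1(1)}i\sqrt{3}\,\big(\omega_n^{aq}-\omega_n^{-aq}\big).
\end{align*}
The second sum on the right is exactly $T_n(q)$. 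For the first sum, the substitution $a\mapsto n-a$, together with $\big(G_{n,3}^1(1)\big)^{-1}=G_{n,3}^2(1)$ (noted before Corollary~\ref{SuffiCondMainTheoCoroNew}) and the disjoint decomposition $G_n(1)=G_{n,3}^1(1)\cup G_{n,3}^2(1)$ from Lemma~\ref{SecLemmaSetEqua}, gives $\sum_{a\in G_{n,3}^1(1)}(\omega_n^{aq}+\omega_n^{-aq})=\sum_{a\in G_n(1)}\omega_n^{aq}=C_n(q)$, the imaginary parts cancelling because $G_n(1)$ is closed under $a\mapsto n-a$.

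Putting these together yields $T_n(q)=2Z_n^1(q)+C_n(q)$, so $T_n(q)$ is an integer, being a sum of the integers $2Z_n^1(q)$ and $C_n(q)$. I do not anticipate any genuine obstacle here: the only point needing care is the algebraic bookkeeping that expresses $Z_n^1(q)$ in terms of $C_n(q)$ and $T_n(q)$, and then checking that Lemma~\ref{SuffiCondMainTheo}, the relation $(G_{n,3}^1(1))^{-1}=G_{n,3}^2(1)$, and Lemma~\ref{SecLemmaSetEqua} are all legitimately available for $d=1$, which they are because $n\equiv 0\Mod 3$.
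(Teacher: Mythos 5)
Your proposal is correct and follows essentially the same route as the paper: both derive the identity $T_n(q)=2Z_n^1(q)+C_n(q)$ by splitting $Z_n^1(q)$ into its real part $-\tfrac{1}{2}C_n(q)$ and imaginary part $\tfrac{1}{2}T_n(q)$, and then invoke the integrality of $Z_n^1(q)$ (Lemma~\ref{SuffiCondMainTheo}) and of the Ramanujan sum. Your extra care in justifying $\sum_{a\in G_{n,3}^1(1)}(\omega_n^{aq}+\omega_n^{-aq})=C_n(q)$ via $(G_{n,3}^1(1))^{-1}=G_{n,3}^2(1)$ is a detail the paper leaves implicit, but the argument is the same.
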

\begin{proof}
We have
\begin{equation*}
			\begin{split}
Z_n^1(q) 	= \sum\limits_{a\in G_{n,3}^1(1)}\left(\omega_3\omega_n^{aq} + \omega_3^2 \omega_n^{-aq}\right)
			=-\frac{1}{2} \sum\limits_{q\in G_{n}(1)}\omega_n^{jq} + \frac{i\sqrt{3}}{2} \sum\limits_{a\in G_{n,3}^1(1)} (\omega_n^{aq} - \omega_n^{-aq})
			= -\frac{C_n(q)}{2} + \frac{T_n(q)}{2}.\\
			\end{split} 
	\end{equation*}
Thus $T_n(q)=  2 Z_n^1(q) + C_n(q) \in \mathbb{Z}$ for all $n,q \in \mathbb{Z}$.
\end{proof}
	
\begin{lema}\label{NewLemmaEquivaTrans1} Let $n=3^tm$ with $m\not\equiv 0 \Mod 3$. Then the following statements hold.
\begin{enumerate}[label=(\roman*)]
\item If $t=1$ then $G_n(1)=\left( m+3G_{\frac{n}{3}}(1) \right) \cup \left( 2m+3G_{\frac{n}{3}}(1) \right)$.
\item If $t=1$ then $$G_{n,3}^1(1) = \left\{ \begin{array}{ll}
			m+3G_{\frac{n}{3}}(1)     & \mbox{if } m \equiv 1 \Mod 3  \\
			2m+3G_{\frac{n}{3}}(1) & \mbox{if } m \equiv 2 \Mod 3.
		\end{array}\right. $$
\item If $t \geq 2$ then $$G_{n,3}^1(1) = \left\{ \begin{array}{ll}
			\left(m+3G_{\frac{n}{3}}(1)\right)  \cup \left(4m+3G_{\frac{n}{3},3}^2(1)\right)   & \mbox{if } m \equiv 1 \Mod 3  \\
			 \left(2m+3G_{\frac{n}{3}}(1)\right) \cup \left(5m+3G_{\frac{n}{3},3}^1(1)\right) & \mbox{if } m \equiv 2 \Mod 3.
		\end{array}\right. $$
\item If $t \geq 2$ then $$G_{n,3}^1(1) = \left\{ \begin{array}{ll}
			\left(7m+3G_{\frac{n}{3}}(1)\right) \cup \left(4m+3G_{\frac{n}{3},3}^1(1)\right)  & \mbox{if } m \equiv 1 \Mod 3  \\
			 \left(8m+3G_{\frac{n}{3}}(1)\right) \cup \left(5m+3G_{\frac{n}{3},3}^2(1)\right) & \mbox{if } m \equiv 2 \Mod 3.
		\end{array}\right. $$
\item If $t \geq 2$ then $$G_{n,3}^1(1) = \left\{ \begin{array}{ll}
			\left(m+3G_{\frac{n}{3}}(1)\right) \cup \left(4m+3G_{\frac{n}{3}}(1)\right) \cup \left(7m+3G_{\frac{n}{3}}(1)\right)   & \mbox{if } m \equiv 1 \Mod 3  \\
			\left(2m+3G_{\frac{n}{3}}(1)\right) \cup \left(5m+3G_{\frac{n}{3}}(1)\right) \cup \left(8m+3G_{\frac{n}{3}}(1)\right) & \mbox{if } m \equiv 2 \Mod 3.
		\end{array}\right. $$
\end{enumerate}
\end{lema}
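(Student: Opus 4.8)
The plan is to read every displayed equality modulo $n$ (so that, e.g., $m+3G_{\frac n3}(1)$ denotes $\{(m+3j)\bmod n:j\in G_{\frac n3}(1)\}$) and to recognise the whole lemma as a statement about the unit group of $\mathbb Z_n$ sliced by residues modulo $3$ and modulo $9$. Since $\gcd(3^t,m)=1$, an integer is a unit of $\mathbb Z_n$ iff it is coprime to $3$ and to $m$, and $G_{n,3}^r(1)$ is the set of those units lying in the residue class $r$ mod $3$; moreover $3G_{\frac n3}(1)=G_n(3)$ is the set of residues coprime to $m$ that are divisible by exactly one factor of $3$, and $3G_{\frac n3,3}^{1}(1)$, $3G_{\frac n3,3}^{2}(1)$ are its two halves according to whether the quotient by $3$ is $\equiv1$ or $\equiv2\Mod 3$. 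All the shifts in the statement have the form $am$ with $\gcd(a,3)=1$, and $am\equiv0\Mod{m}$, so adding $am$ only moves things within a fixed coprimality-to-$m$ class; the real work is to see which residues mod $3$ (and mod $9$, when $t\geq2$) get hit.

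First I would dispose of parts (i)--(ii), i.e.\ $t=1$ and $n=3m$ with $\gcd(3,m)=1$. Given $k\in G_{3m}(1)$, since $m$ is invertible mod $3$ and $k\not\equiv0\Mod 3$ there is a unique $a\in\{1,2\}$ with $am\equiv k\Mod 3$; then $3\mid k-am$, so $k\equiv am+3j\Mod{3m}$ for a unique $j\in\{0,\dots,m-1\}$, and from $3j\equiv k\Mod{m}$ together with $\gcd(k,m)=1$ one gets $\gcd(j,m)=1$ and $j\neq0$, i.e.\ $j\in G_m(1)$. Conversely $am+3j$ with $a\in\{1,2\}$ and $j\in G_m(1)$ is coprime to $3m$ and lies in residue class $am$ mod $3$. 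Hence $m+3G_m(1)$ and $2m+3G_m(1)$ are disjoint (distinct residues mod $3$) and exhaust $G_{3m}(1)$, which is (i); intersecting with the class $1$ mod $3$ and using $\{m,2m\}\equiv\{1,2\}\Mod 3$ gives (ii).

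For $t\geq2$ the new ingredient is the observation that if $k$ is coprime to $3$ then $3k\equiv3\Mod 9$ or $3k\equiv6\Mod 9$ according as $k\equiv1$ or $2\Mod 3$, while $am\bmod 9$ (for $\gcd(a,3)=1$) runs over the residue class $am$ mod $3$. Carrying out the arithmetic for $m\equiv1\Mod 3$: adding $m$ to $3G_{\frac n3}(1)$ yields exactly the units $\equiv1\Mod 3$, coprime to $m$, that are $\not\equiv m\Mod 9$, whereas adding $4m$ to $3G_{\frac n3,3}^2(1)$ yields exactly the units $\equiv m\Mod 9$ (necessarily coprime to $m$); the union of the two is therefore all of $G_{n,3}^1(1)$, and the count $|G_{n,3}^1(1)|=3^{t-1}\varphi(m)=2\cdot3^{t-2}\varphi(m)+3^{t-2}\varphi(m)$ certifies that nothing is double-counted. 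That is (iii); part (iv) is the mirror image (attach $7m$ to $3G_{\frac n3}(1)$ and $4m$ to $3G_{\frac n3,3}^1(1)$), proved by the identical mod-$9$ bookkeeping; and (v) then follows formally: writing $G_{\frac n3}(1)$ as the disjoint union of $G_{\frac n3,3}^1(1)$ and $G_{\frac n3,3}^2(1)$, parts (iii) and (iv) together show that each of $m+3G_{\frac n3}(1)$, $4m+3G_{\frac n3}(1)$, $7m+3G_{\frac n3}(1)$ sits inside $G_{n,3}^1(1)$ and that their union already contains it. The cases $m\equiv2\Mod 3$ run the same way with the coefficient set $\{1,4,7\}$ replaced by $\{2,5,8\}$ (and $G^1_{\frac n3,3}$, $G^2_{\frac n3,3}$ interchanged where they occur).

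The only genuinely delicate point is the residue-mod-$9$ accounting in (iii)--(iv): one must pin down, for each shifted copy of $3G_{\frac n3}(1)$, $3G_{\frac n3,3}^1(1)$, $3G_{\frac n3,3}^2(1)$, exactly which classes mod $9$ it occupies, and then confirm that together they tile the set of units of $\mathbb Z_n$ lying in $1+3\mathbb Z$ without gaps --- here the Euler-function identities are the cleanest way to certify completeness. A trivial loose end is the case $n=3$ (that is, $m=1,t=1$), where $G_1(1)=\emptyset$: there (i)--(ii) must be read with the evident convention, or simply checked by hand as a base case.
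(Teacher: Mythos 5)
Your argument is correct and follows essentially the same route as the paper: show each shifted copy lands inside the target set via the coprimality-to-$3$ and coprimality-to-$m$ checks, then conclude equality by disjointness of residue classes together with a cardinality count, and derive (v) from (iii) and (iv). You in fact supply more detail than the paper, which dismisses (iii) and (iv) with ``similar to Part (i)''; your explicit mod-$9$ bookkeeping is exactly the content hiding behind that phrase, and your verification checks out.
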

\begin{proof}
\begin{enumerate}[label=(\roman*)]
\item Assume that $t=1$. Let $k \in m+3G_{\frac{n}{3}}(1)$. We get $k=m+3r$ for some $r\in G_{\frac{n}{3}}(1)$. Then $\gcd(r, \frac{n}{3})=1$ suggests that $\gcd(m+3r, n)=1$. Therefore $m+3G_{\frac{n}{3}}(1) \subseteq G_n(1)$. Similarly, $2m+3G_{\frac{n}{3}}(1) \subseteq G_n(1)$. Conversely, the size of both $\left( m+3G_{\frac{n}{3}}(1) \right) \cup \left( 2m+3G_{\frac{n}{3}}(1) \right)$ and $G_n(1)$ are same, and hence both are equal.
\item Assume that $t=1$ and $m \equiv 1 \Mod 3$. Using Part (i), we have $m+3G_{\frac{n}{3}}(1) \subseteq G_{n,3}^1(1)$, and each element of $m+3G_{\frac{n}{3}}(1)$ is congruent to $1$ modulo $3$. Hence $G_{n,3}^1(1)=m+3G_{\frac{n}{3}}(1)$. Similarly, if $t=1$ and $m \equiv 2 \Mod 3$ then $G_{n,3}^1(1)=2m+3G_{\frac{n}{3}}(1)$.
\item The proof is similar to Part (i).
\item The proof is similar to Part (i).
\item Use Part (iii) and Part (iv).
\end{enumerate}
\end{proof}

Let $\Im(z)$ denote the imaginary part of the complex number $z$. 

\begin{lema}\label{Tn(q)SumEqualTo3TimesSum00}
Let $n=3m$ with $m\not\equiv 0 \Mod 3$. Then 
$$T_n(q)= \left\{ \begin{array}{rl}
					-2\sqrt{3} \Im(\omega_3^{q})C_{\frac{n}{3}}(q) & \mbox{if } m \equiv 1 \Mod 3 \\
					-2\sqrt{3} \Im(\omega_3^{2q})C_{\frac{n}{3}}(q) & \mbox{if } m \equiv 2 \Mod 3.
				\end{array}\right.$$  Moreover,  $\frac{T_n(q)}{3}$ is an integer for all $q\in \mathbb{Z}$.
\end{lema}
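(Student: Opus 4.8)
The plan is to evaluate $T_n(q)$ directly by substituting the explicit description of $G_{n,3}^1(1)$ furnished by Lemma~\ref{NewLemmaEquivaTrans1}. Since $n=3m$ with $m\not\equiv 0\Mod 3$, we are in the case $t=1$ of that lemma, and part (ii) gives $G_{n,3}^1(1)=am+3G_{\frac n3}(1)$, where $a=1$ if $m\equiv 1\Mod 3$ and $a=2$ if $m\equiv 2\Mod 3$. So every element of $G_{n,3}^1(1)$ has the form $am+3r$ with $r$ ranging over $G_{\frac n3}(1)$ (read modulo $n$, which is harmless since these integers only occur as exponents of $\omega_n$).

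First I would record the two elementary identities $\omega_n^{am}=\omega_3^{a}$ and $\omega_n^{3}=\omega_{n/3}$, both immediate from $n=3m$. Plugging the exponent $am+3r$ into $T_n(q)=\sum_{a\in G_{n,3}^1(1)} i\sqrt 3\,(\omega_n^{aq}-\omega_n^{-aq})$ factors the exponential as $\omega_n^{(am+3r)q}=\omega_3^{aq}\,\omega_{n/3}^{rq}$, giving
$$T_n(q)=i\sqrt 3\Bigl(\omega_3^{aq}\sum_{r\in G_{\frac n3}(1)}\omega_{n/3}^{rq}-\omega_3^{-aq}\sum_{r\in G_{\frac n3}(1)}\omega_{n/3}^{-rq}\Bigr).$$
The next step is to observe that $G_{\frac n3}(1)$ is closed under $r\mapsto \frac n3-r$, so both inner sums equal the Ramanujan sum, i.e.\ $\sum_{r\in G_{\frac n3}(1)}\omega_{n/3}^{\pm rq}=C_{\frac n3}(q)$, a real integer by the classical theory recalled after~(\ref{ramasum}). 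Hence $T_n(q)=i\sqrt 3\,C_{\frac n3}(q)\,(\omega_3^{aq}-\omega_3^{-aq})$, and using $\omega_3^{aq}-\omega_3^{-aq}=2i\,\Im(\omega_3^{aq})$ this collapses to $T_n(q)=-2\sqrt 3\,\Im(\omega_3^{aq})\,C_{\frac n3}(q)$, which is precisely the asserted formula in the two cases $a=1$ and $a=2$.

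For the final claim I would simply note that $\omega_3^{aq}$ is a cube root of unity, so $\Im(\omega_3^{aq})\in\{0,\tfrac{\sqrt 3}{2},-\tfrac{\sqrt 3}{2}\}$, whence $-2\sqrt 3\,\Im(\omega_3^{aq})\in\{0,3,-3\}$; combined with the formula just proved and the integrality of $C_{\frac n3}(q)$ this yields $T_n(q)\in 3\Zl$, so $\frac{T_n(q)}{3}\in\Zl$ for every $q$. (Alternatively, one may invoke Lemma~\ref{Tn(q)IsIntegerForAll} for the integrality of $T_n(q)$ and use the explicit formula only to extract the factor $3$.) There is no serious obstacle here: the sole point requiring care is the bookkeeping when passing from exponents taken modulo $n$ to roots of unity, and making sure the negation symmetry of $G_{\frac n3}(1)$ that identifies both inner sums with $C_{\frac n3}(q)$ is applied in each of the two residue cases for $m$.
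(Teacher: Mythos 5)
Your proposal is correct and follows essentially the same route as the paper: both substitute the description $G_{n,3}^1(1)=am+3G_{\frac{n}{3}}(1)$ from Part (ii) of Lemma~\ref{NewLemmaEquivaTrans1}, identify the inner sums with the real integer $C_{\frac{n}{3}}(q)$, and collapse the difference of conjugate exponentials to $-2\sqrt{3}\,\Im(\omega_3^{aq})C_{\frac{n}{3}}(q)$. Your closing observation that $-2\sqrt{3}\,\Im(\omega_3^{aq})\in\{0,3,-3\}$ is exactly the paper's remark that $2\Im(\omega_3)=\sqrt{3}$, so nothing further is needed.
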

\begin{proof} We have
		\begin{equation*}
			\begin{split}
				T_{n}(q) &=  \sum\limits_{a\in G_{n,3}^1(1)} i\sqrt{3} (\omega_n^{aq} - \omega_n^{-aq}) \\
				&= \left\{ \begin{array}{rl}
					\sum\limits_{a\in G_{\frac{n}{3}}(1)} i\sqrt{3}(\omega_n^{mq} \omega_n^{3aq} - \omega_n^{-mq} \omega_n^{-3aq}) & \mbox{if } m \equiv 1 \Mod 3 \\
					\sum\limits_{a\in G_{\frac{n}{3}}(1)} i\sqrt{3} (\omega_n^{2mq}\omega_n^{3aq} - \omega_n^{-2mq} \omega_n^{-3aq}) & \mbox{if } m \equiv 2 \Mod 3
				\end{array}\right.\\
				&= \left\{ \begin{array}{rl}
					-2\sqrt{3} \Im(\omega_n^{mq})\sum\limits_{a\in G_{\frac{n}{3}}(1)}  \omega_{\frac{n}{3}}^{aq}  & \mbox{if } m \equiv 1 \Mod 3 \\
					-2\sqrt{3} \Im(\omega_n^{2mq})\sum\limits_{a\in G_{\frac{n}{3}}(1)}  \omega_{\frac{n}{3}}^{aq} & \mbox{if } m \equiv 2 \Mod 3
				\end{array}\right.\\
				&= \left\{ \begin{array}{rl}
					-2\sqrt{3} \Im(\omega_3^{q})C_{\frac{n}{3}}(q) & \mbox{if } m \equiv 1 \Mod 3 \\
					-2\sqrt{3} \Im(\omega_3^{2q})C_{\frac{n}{3}}(q) & \mbox{if } m \equiv 2 \Mod 3.
				\end{array}\right.
			\end{split} 
		\end{equation*} Here the second equality follows from Part $(ii)$ of Lemma~\ref{NewLemmaEquivaTrans1}. Since $2\Im(\omega_3)=\sqrt{3}$, therefore $\frac{T_n(q)}{3}$ is an integer for all $q\in \mathbb{Z}$.
\end{proof}

\begin{lema}\label{Tn(q)SumEqualTo3TimesSum}
Let $n=3^tm$ with $m\not\equiv 0 \Mod 3$ and $t\geq 2$. Then 
$$2T_n(q)= \left\{ \begin{array}{rl}
					-2\sqrt{3} \Im(\omega_n^{mq}+\omega_n^{4mq}+\omega_n^{7mq})C_{\frac{n}{3}}(q) & \mbox{if } m \equiv 1 \Mod 3 \\
					-2\sqrt{3} \Im(\omega_n^{2mq}+\omega_n^{5mq}+\omega_n^{8mq})C_{\frac{n}{3}}(q) & \mbox{if } m \equiv 2 \Mod 3.
				\end{array}\right.$$ Moreover,  $\frac{T_n(q)}{3}$ is an integer for all $q\in \mathbb{Z}$.
\end{lema}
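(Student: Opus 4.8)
The plan is to imitate the proof of Lemma~\ref{Tn(q)SumEqualTo3TimesSum00}, but now the decomposition of $G_{n,3}^1(1)$ comes from parts (iii) and (iv) of Lemma~\ref{NewLemmaEquivaTrans1} instead of part (ii); since each of those two decompositions is a disjoint union covering $G_{n,3}^1(1)$, adding the two resulting expressions for $T_n(q)$ produces $2T_n(q)$, which is exactly the factor $2$ on the left. Assume first $m \equiv 1 \Mod 3$. Part (iii) gives that $T_n(q)$ is the sum of $i\sqrt3(\omega_n^{aq}-\omega_n^{-aq})$ over $a\in m+3G_{\frac n3}(1)$ plus the same over $a\in 4m+3G_{\frac n3,3}^2(1)$ (the union is disjoint, since $\varphi(\frac n3)+\tfrac12\varphi(\frac n3)=\tfrac12\varphi(n)=|G_{n,3}^1(1)|$), and part (iv) gives $T_n(q)$ as the sum over $a\in 7m+3G_{\frac n3}(1)$ plus the sum over $a\in 4m+3G_{\frac n3,3}^1(1)$. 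Adding these and using $G_{\frac n3,3}^1(1)\cup G_{\frac n3,3}^2(1)=G_{\frac n3}(1)$ (Lemma~\ref{SecLemmaSetEqua}, valid as $\frac n3\equiv 0\Mod 3$) yields
$$2T_n(q)=\sum_{c\in\{1,4,7\}}\ \sum_{k\in G_{\frac n3}(1)} i\sqrt3\left(\omega_n^{(cm+3k)q}-\omega_n^{-(cm+3k)q}\right).$$

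For each fixed $c$ I would then factor $\omega_n^{(cm+3k)q}=\omega_n^{cmq}\omega_{\frac n3}^{kq}$ (using $\omega_n^3=\omega_{\frac n3}$), pull $\omega_n^{\pm cmq}$ out of the $k$-sum, and invoke $\sum_{k\in G_{\frac n3}(1)}\omega_{\frac n3}^{\pm kq}=C_{\frac n3}(q)$: this holds because $C_{\frac n3}(q)$ is the real part of $\sum_{k\in G_{\frac n3}(1)}\omega_{\frac n3}^{kq}$ and that sum is already real, $G_{\frac n3}(1)$ being invariant under $k\mapsto \frac n3-k$. The $c$-term thus equals $i\sqrt3\,C_{\frac n3}(q)\,(\omega_n^{cmq}-\omega_n^{-cmq})=-2\sqrt3\,\Im(\omega_n^{cmq})\,C_{\frac n3}(q)$, and summing over $c\in\{1,4,7\}$ gives the stated identity. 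The case $m\equiv 2\Mod 3$ is identical, using the second branches of (iii)--(iv), for which the offsets are $2m,5m,8m$.

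For the ``moreover'' assertion I would show that $T_n(q)$ is in fact $0$ or $\pm\,3^t C_m(q)$, hence always divisible by $3$ since $t\ge 2$, via a case analysis on $v:=v_3(q)$. Writing $q=3^v q_0$ with $3\nmid q_0$: if $v\ge t$ then each $\omega_n^{cmq}=\omega_{3^t}^{cq}=1$, so the triple sum is real and $T_n(q)=0$. If $v\le t-1$ then $\omega_n^{mq}+\omega_n^{4mq}+\omega_n^{7mq}=\omega_{3^{t-v}}^{q_0}\bigl(1+\omega_{3^{t-v-1}}^{q_0}+\omega_{3^{t-v-1}}^{2q_0}\bigr)$; for $v=t-2$ the bracket is $1+\zeta+\zeta^2=0$ with $\zeta=\omega_3^{q_0}$ a primitive cube root of unity, so $T_n(q)=0$; for $v\le t-3$ the Ramanujan sum $C_{\frac n3}(q)$ vanishes, because $\frac{n/3}{\gcd(n/3,q)}=3^{t-1-v}\bigl(m/\gcd(m,q_0)\bigr)$ is divisible by $9$ and hence not squarefree; and for $v=t-1$ the bracket is $3$ and $\omega_{3^t}^{cq}=\omega_3^{q_0}$, so $T_n(q)=-3\sqrt3\,\Im(\omega_3^{q_0})\,C_{\frac n3}(q)=\mp\tfrac92\,C_{\frac n3}(q)$; since $T_n(q)\in\mathbb Z$ by Lemma~\ref{Tn(q)IsIntegerForAll}, $C_{\frac n3}(q)$ is even and $9\mid T_n(q)$. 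In every case $3\mid T_n(q)$, so $\frac{T_n(q)}{3}\in\mathbb Z$.

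The algebra in the middle step is routine. The main obstacle is the bookkeeping in the ``moreover'' step: tracking precisely how the factor $1+\zeta+\zeta^2$ and the Ramanujan sum $C_{\frac n3}(q)$ each vanish or contribute according to $v_3(q)$, and using the right elementary facts about Ramanujan sums (vanishing of $C_N(q)$ when $N/\gcd(N,q)$ is non-squarefree, $C_N(q)$ even when $N$ is even, $C_{p^a}(q)=\varphi(p^a)$ when $p^a\mid q$). One could instead run an induction on $t$, sending $(t,q)\mapsto(t-1,q/3)$ when $3\mid q$ and landing on Lemma~\ref{Tn(q)SumEqualTo3TimesSum00} when $t$ reaches $1$; I would keep the direct case analysis as it is self-contained.
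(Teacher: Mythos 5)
Your proof is correct. For the displayed identity you follow the paper's route exactly: you double-count $G_{n,3}^1(1)$ by adding the two disjoint decompositions in parts (iii) and (iv) of Lemma~\ref{NewLemmaEquivaTrans1}, merge the two $4m$-cosets via $G_{\frac n3,3}^1(1)\cup G_{\frac n3,3}^2(1)=G_{\frac n3}(1)$, and use the realness of $\sum_{k\in G_{n/3}(1)}\omega_{n/3}^{kq}=C_{n/3}(q)$ to extract the imaginary part; this is precisely what the paper does through its part (v). For the ``moreover'' claim, however, you take a genuinely different and more granular route: a case analysis on $v=v_3(q)$, in which $T_n(q)$ vanishes when $v\ge t$, when $v=t-2$ (via $1+\zeta+\zeta^2=0$), and when $v\le t-3$ (via vanishing of $C_{n/3}(q)$ because $\tfrac{n/3}{\gcd(n/3,q)}$ is divisible by $9$), while for $v=t-1$ you get $T_n(q)=\mp\tfrac92 C_{\frac n3}(q)$ and use the integrality of $T_n(q)$ (Lemma~\ref{Tn(q)IsIntegerForAll}) to force $C_{\frac n3}(q)$ even, whence $9\mid T_n(q)$. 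The paper instead splits on $t=2$ versus $t\ge 3$: for $t\ge3$ it argues that $C_{\frac n3}(q)$ is a multiple of $3$ and that the imaginary-part factor is a rational algebraic integer, and for $t=2$ it asserts $2T_n(q)=C_9(q)C_{\frac n3}(q)$. That last identity is actually off by a factor of $3$ (for $n=9$, $q=3$ one has $2T_9(3)=-18$ while $C_9(3)C_3(3)=-6$), so your valuation-based argument is not only valid but repairs the weakest step of the paper's proof; its only cost is the extra bookkeeping you already flag, and it buys the sharper conclusion that $T_n(q)$ is either $0$ or $\pm 3^tC_m(q)$.
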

\begin{proof} We use the fact that $G_{n,3}^1(1)$ can be written as disjoint unions in two different ways using Part $(iii)$ and Part $(iv)$ of Lemma~\ref{NewLemmaEquivaTrans1}. We have
		\begin{equation*}
			\begin{split}
			  & ~~	2T_{n}(q) \\
				=&   \sum\limits_{a\in G_{n,3}^1(1)} i\sqrt{3} (\omega_n^{aq} - \omega_n^{-aq}) + \sum\limits_{a\in G_{n,3}^1(1)} i\sqrt{3} (\omega_n^{aq} - \omega_n^{-aq}) \\
				= & \left\{ \begin{array}{rl}
					\sum\limits_{a\in G_{\frac{n}{3}}(1)} i\sqrt{3}((\omega_n^{mq}+\omega_n^{4mq}+\omega_n^{7mq}) \omega_n^{3aq} - (\omega_n^{-mq}+\omega_n^{-4mq}+\omega_n^{-7mq}) \omega_n^{-3aq}) & \mbox{if } m \equiv 1 \Mod 3 \\
					\sum\limits_{a\in G_{\frac{n}{3}}(1)} i\sqrt{3} ((\omega_n^{2mq}+\omega_n^{5mq}+\omega_n^{8mq})\omega_n^{3aq} - (\omega_n^{-2mq}+\omega_n^{-5mq}+\omega_n^{-8mq}) \omega_n^{-3aq}) & \mbox{if } m \equiv 2 \Mod 3
				\end{array}\right.\\
				= & \left\{ \begin{array}{rl}
					 -2\sqrt{3} \Im(\omega_n^{mq}+\omega_n^{4mq}+\omega_n^{7mq})\sum\limits_{a\in G_{\frac{n}{3}}(1)}  \omega_{\frac{n}{3}}^{aq}  & \mbox{if } m \equiv 1 \Mod 3 \\
					-2\sqrt{3} \Im(\omega_n^{2mq}+\omega_n^{5mq}+\omega_n^{8mq})\sum\limits_{a\in G_{\frac{n}{3}}(1)}  \omega_{\frac{n}{3}}^{aq} & \mbox{if } m \equiv 2 \Mod 3
				\end{array}\right.\\
			=	& \left\{ \begin{array}{rl}
					-2\sqrt{3} \Im(\omega_n^{mq}+\omega_n^{4mq}+\omega_n^{7mq})C_{\frac{n}{3}}(q) & \mbox{if } m \equiv 1 \Mod 3 \\
					-2\sqrt{3} \Im(\omega_n^{2mq}+\omega_n^{5mq}+\omega_n^{8mq})C_{\frac{n}{3}}(q) & \mbox{if } m \equiv 2 \Mod 3.
				\end{array}\right.
			\end{split} 
		\end{equation*} 
Here the second equality follows from Part $(v)$ of Lemma~\ref{NewLemmaEquivaTrans1}. If $t=2$, then
		\begin{equation*}
		\begin{split}
			2T_{n}(q)= \left\{ \begin{array}{rl}
				C_9(q)C_{\frac{n}{3}}(q) & \mbox{if } m \equiv 1 \Mod 3 \\
				C_9(q)C_{\frac{n}{3}}(q) & \mbox{if } m \equiv 2 \Mod 3.
			\end{array}\right.
		\end{split} 
	\end{equation*} Thus  $\frac{T_n(q)}{3}$ is an integer for all $q\in \mathbb{Z}$. Assume that $t \geq 3$. If $C_{\frac{n}{3}}(q)\neq 0$ then both $-2\sqrt{3} \Im(\omega_n^{mq}+\omega_n^{4mq}+\omega_n^{7mq})$ and $-2\sqrt{3} \Im(\omega_n^{2mq}+\omega_n^{5mq}+\omega_n^{8mq})$ are rational algebraic integers, and hence both are integers for each $q\in \mathbb{Z}$. As $C_{\frac{n}{3}}(q)$ is an integer multiple of $3$, we find that $\frac{T_n(q)}{3}$ is an integer for all $q\in \mathbb{Z}$.
\end{proof}

\begin{lema}\label{SameParitySum}
Let $n \equiv 0 \Mod {3}$. Then $C_n(q)$ and $\frac{T_n(q)}{3}$ are integers of the same parity for all $q\in \mathbb{Z}$.
\end{lema}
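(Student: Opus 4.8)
The plan is to deduce everything from the identity $T_n(q) = 2Z_n^1(q) + C_n(q)$, which is exactly the relation extracted in the proof of Lemma~\ref{Tn(q)IsIntegerForAll} upon expanding $\omega_3 = \frac{-1+i\sqrt{3}}{2}$ and $\omega_3^2 = \frac{-1-i\sqrt{3}}{2}$ in the definition of $Z_n^1(q)$ and using $\bigl(G_{n,3}^1(1)\bigr)^{-1} = G_{n,3}^2(1)$, which turns $\sum_{a\in G_{n,3}^1(1)}(\omega_n^{aq}+\omega_n^{-aq})$ into $\sum_{a\in G_n(1)}\omega_n^{aq}=C_n(q)$. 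So first I would just recall this identity rather than reprove it.

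Next I would collect the three integrality facts that are already available: $C_n(q)\in\mathbb{Z}$ (the classical property of Ramanujan sums noted right after~\eqref{ramasum}); $Z_n^1(q)\in\mathbb{Z}$ for every $q$, which is Lemma~\ref{SuffiCondMainTheo} taken with $d=1$; and $\frac{1}{3}T_n(q)\in\mathbb{Z}$ for every $q$. For the last one, write $n=3^tm$ with $m\not\equiv 0\Mod 3$: the case $t=1$ is Lemma~\ref{Tn(q)SumEqualTo3TimesSum00} and the case $t\geq 2$ is Lemma~\ref{Tn(q)SumEqualTo3TimesSum}, and together they cover all $n\equiv 0\Mod 3$. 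With these in hand, set $k=\frac{1}{3}T_n(q)\in\mathbb{Z}$, so $T_n(q)=3k$, and rewrite the identity as
\[
C_n(q) \;=\; T_n(q) - 2Z_n^1(q) \;=\; 3k - 2Z_n^1(q).
\]
Reducing modulo $2$ and using that $2Z_n^1(q)$ is even gives $C_n(q)\equiv 3k\equiv k\Mod 2$, i.e.\ $C_n(q)$ and $\frac{1}{3}T_n(q)$ have the same parity. Since $C_n$, $T_n$ and $Z_n^1$ are periodic in $q$ with period $n$, having this for $q\in\{0,1,\dots,n-1\}$ already yields it for all $q\in\mathbb{Z}$.

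I do not expect a genuine obstacle: once the identity $T_n(q)=2Z_n^1(q)+C_n(q)$ and the integrality of $Z_n^1(q)$ are granted, the parity statement is a one-line congruence. The only point that needs care is the divisibility $3\mid T_n(q)$, and that is precisely where the two-case analysis of Lemmas~\ref{Tn(q)SumEqualTo3TimesSum00} and~\ref{Tn(q)SumEqualTo3TimesSum} is used; a purely computational alternative would substitute the explicit evaluations of $T_n(q)$ from those lemmas and appeal to multiplicativity of the Ramanujan sum, but that is strictly longer and I would not take that route.
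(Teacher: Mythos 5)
Your proposal is correct and follows essentially the same route as the paper: both rest on the identity $T_n(q)-C_n(q)=2Z_n^1(q)$ together with the integrality of $Z_n^1(q)$ and the divisibility $3\mid T_n(q)$ supplied by Lemmas~\ref{Tn(q)SumEqualTo3TimesSum00} and~\ref{Tn(q)SumEqualTo3TimesSum}, and then conclude by the one-line congruence $C_n(q)\equiv T_n(q)\equiv \tfrac{T_n(q)}{3}\Mod 2$. The only cosmetic difference is that you make explicit the appeal to Lemma~\ref{SuffiCondMainTheo} for $Z_n^1(q)\in\mathbb{Z}$ and the periodicity remark, both of which the paper leaves implicit.
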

\begin{proof}
Since $T_n(q)- C_n(q)=2 Z_n^1(q)$ is an even integer,  $C_n(q)$ and $T_n(q)$ are integers of the same parity for each $q\in \mathbb{Z}$. By Lemma~\ref{Tn(q)SumEqualTo3TimesSum00} and Lemma~\ref{Tn(q)SumEqualTo3TimesSum}, $\frac{T_n(q)}{3}$ is an integer for all $q\in \mathbb{Z}$. Hence $C_n(q)$ and $\frac{T_n(q)}{3}$ are integers of the same parity for all $q\in \mathbb{Z}$.
\end{proof}	

Let $S$ be a subset of $\mathbb{Z}_n$ and $j\in \{0,1,...,n-1\}$. Define 
$$\alpha_j(S)=\sum\limits_{k \in S \setminus \overline{S}} \omega_n^{jk}\hspace{0.5cm} \textnormal{ and }\hspace{0.5cm} \beta_j(S)=\sum\limits_{k \in \overline{S}}(\omega \omega_n^{jk} + \overline{\omega} \omega_n^{-jk}),$$  
where $\omega = \frac{1}{2} - \frac{i\sqrt{3}}{6}$. 
It is clear that $\alpha_j(S)$ and $\beta_j(S)$ are real numbers. We have 
$$\sum_{k \in S} \omega_n^{jk}=\alpha_j(S)+\beta_j(S)+ \bigg( \frac{-1}{2} + \frac{i\sqrt{3}}{2} \bigg) (\beta_j(S)-\beta_{n-j}(S)).$$ 
Note that $\alpha_j(S)=\alpha_{n-j}(S)$ for each $j$. Therefore if $\alpha_j(S)+\beta_j(S)\in \mathbb{Z}$ for each $j\in \{0,1,...,n-1\}$ then $\beta_j(S)-\beta_{n-j}(S)=[\alpha_j(S)+\beta_j(S)]-[\alpha_{n-j}(S)+\beta_{n-j}(S)]$ is also an integer for each $j\in \{0,1,...,n-1\}$. Hence the mixed circulant graph $\text{Circ}(\mathbb{Z}_n,S)$ is Eisenstein integral if and only if $\alpha_j(S)+\beta_j(S)$ is an integer for each $j\in \{0,1,...,n-1\}$.

\begin{lema}\label{CharaEisensteinIntegral}
Let $S\subseteq \mathbb{Z}_n$ such that $0\notin S$. Then the mixed circulant graph $\text{\text{Circ}}(\mathbb{Z}_n,S)$ is Eisenstein integral if and only if $2 \alpha_j(S)$ and $2 \beta_j(S)$ are integers of the same parity for each $j\in \{0,1,...,n-1\}$.
\end{lema}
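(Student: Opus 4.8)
The plan is to deduce the statement from the (already established) HS‑integrality theory by means of a single identity that relates the real numbers $\beta_j(S)$ to the HS‑eigenvalues. Write $P_j:=\alpha_j(S)+\beta_j(S)$; recall from the paragraph preceding the lemma that $\text{Circ}(\mathbb{Z}_n,S)$ is Eisenstein integral if and only if $P_j\in\mathbb{Z}$ for every $j\in\{0,1,\dots,n-1\}$. Also $\alpha_j(S)=\alpha_{n-j}(S)$, since $S\setminus\overline S$ is symmetric (so $\alpha_j(S)$ is real), whence $P_j-P_{n-j}=\beta_j(S)-\beta_{n-j}(S)$.

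First I would record the key identity. With $\omega=\tfrac12-\tfrac{i\sqrt3}{6}$ one has $\beta_j(S)=\sum_{k\in\overline S}(\omega\,\omega_n^{jk}+\overline\omega\,\omega_n^{-jk})$, and replacing $j$ by $n-j$ (and using $\omega_n^{nk}=1$) gives $\beta_{n-j}(S)=\sum_{k\in\overline S}(\overline\omega\,\omega_n^{jk}+\omega\,\omega_n^{-jk})$. A one‑line computation shows $2\overline\omega-\omega=\tfrac{1+i\sqrt3}{2}=\omega_6$ and $2\omega-\overline\omega=\omega_6^{5}$, so that
\[ 2\beta_{n-j}(S)-\beta_j(S)=\sum_{k\in\overline S}\big(\omega_6\,\omega_n^{jk}+\omega_6^{5}\,\omega_n^{-jk}\big)=\mu_j, \]
where $\mu_j$ is the oriented contribution to the HS‑eigenvalue $\gamma_j=\alpha_j(S)+\mu_j$ of $\text{Circ}(\mathbb{Z}_n,S)$ as in Lemma~\ref{SpecMixCayGraph}. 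Combining this with $\alpha_j(S)=\alpha_{n-j}(S)$ yields $\gamma_j=\alpha_{n-j}(S)+2\beta_{n-j}(S)-\beta_j(S)=2P_{n-j}-P_j$ for every $j$.

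For the forward implication, assume $\text{Circ}(\mathbb{Z}_n,S)$ is Eisenstein integral, so $P_j\in\mathbb{Z}$ for all $j$. Then $\gamma_j=2P_{n-j}-P_j\in\mathbb{Z}$ for all $j$, that is, $\text{Circ}(\mathbb{Z}_n,S)$ is HS‑integral. By Lemma~\ref{SeperatIntegMixedGraph} the simple graph $\text{Circ}(\mathbb{Z}_n,S\setminus\overline S)$ is then HS‑integral, and its eigenvalues are exactly the numbers $\alpha_j(S)$, so $\alpha_j(S)\in\mathbb{Z}$ for every $j$; consequently $\beta_j(S)=P_j-\alpha_j(S)\in\mathbb{Z}$ as well. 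Hence $2\alpha_j(S)$ and $2\beta_j(S)$ are (even) integers of the same parity. Conversely, if $2\alpha_j(S)$ and $2\beta_j(S)$ are integers of the same parity, then $2P_j=2\alpha_j(S)+2\beta_j(S)$ is even, so $P_j\in\mathbb{Z}$ for every $j$, and therefore $\text{Circ}(\mathbb{Z}_n,S)$ is Eisenstein integral.

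The only substantive step is the identity $\gamma_j=2P_{n-j}-P_j$ (equivalently $\mu_j=2\beta_{n-j}(S)-\beta_j(S)$): it converts the ``$2\alpha_j(S),\,2\beta_j(S)$ of equal parity'' condition into ordinary HS‑integrality of $\text{Circ}(\mathbb{Z}_n,S)$, after which everything is bookkeeping with Lemmas~\ref{SpecMixCayGraph} and~\ref{SeperatIntegMixedGraph}. I expect the only delicate points to be verifying $2\overline\omega-\omega=\omega_6$ and keeping the index swaps $j\leftrightarrow n-j$ consistent, both of which are routine.
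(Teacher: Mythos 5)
Your proof is correct, but the forward direction follows a genuinely different route from the paper's. The paper takes the Eisenstein integrality of $\gamma_j$, extracts the integer $\beta_j(S)-\beta_{n-j}(S)=\sum_{k\in\overline S}\tfrac{-i\sqrt3}{3}(\omega_n^{jk}-\omega_n^{-jk})$, feeds it into Lemma~\ref{Sqrt3NecessIntSum} to conclude $\sum_{k\in\overline S\cup\overline S^{-1}}\omega_n^{jk}\in\mathbb Z$, and then assembles $2\beta_j(S)$ as the difference of these two integers; this yields only that $2\alpha_j(S)$ and $2\beta_j(S)$ are integers of the same parity, with the upgrade to $\alpha_j(S),\beta_j(S)\in\mathbb Z$ deferred to Lemma~\ref{CharaEisensteinIntegral2}. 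You instead prove the clean linear identity $\mu_j=2\beta_{n-j}(S)-\beta_j(S)$ (the computation $2\overline\omega-\omega=\omega_6$ checks out), deduce $\gamma_j=2P_{n-j}-P_j$, and so pass directly from Eisenstein integrality to HS-integrality; Lemma~\ref{SeperatIntegMixedGraph} then gives $\alpha_j(S)\in\mathbb Z$ and hence $\beta_j(S)=P_j-\alpha_j(S)\in\mathbb Z$. Your route is slicker at this point in the development and in fact delivers the stronger conclusion of Lemma~\ref{CharaEisensteinIntegral2} for free (both halves are integers, not merely of equal parity); the trade-off is that it leans on Lemma~\ref{SeperatIntegMixedGraph}, whose own proof already invokes Lemma~\ref{Sqrt3NecessIntSum}, so the underlying machinery is ultimately the same --- but as a derivation from the stated prior results it is complete and valid, including the easy converse via $2P_j=2\alpha_j(S)+2\beta_j(S)$.
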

\begin{proof} 
Suppose the mixed circulant graph $\text{\text{Circ}}(\mathbb{Z}_n,S)$ is Eisenstein integral and $j\in \{0,1,...,n-1\}$. Then $\alpha_j(S)+\beta_j(S)$ and $\beta_j(S)-\beta_{n-j}(S)= \sum\limits_{k\in \overline{S}} \frac{-i\sqrt{3}}{3}(\omega_n^{jk}-\omega_n^{-jk})$ are integers. By Lemma~\ref{Sqrt3NecessIntSum}, $\sum\limits_{k\in \overline{S}\cup \overline{S}^{-1}} \omega_n^{jk}\in \Zl$. Since 
$$2 \beta_j(S)= \sum\limits_{k\in \overline{S}\cup \overline{S}^{-1}} \omega_n^{jk}-  \sum\limits_{k\in \overline{S}}\frac{i\sqrt{3}}{3} (\omega_n^{jk}-\omega_n^{-jk}),$$ 
we find that $2 \beta_j(S)$ is an integer. Therefore $2 \alpha_j(S)=2(\alpha_j(S)+\beta_j(S))-2\beta_j(S)$ is also an integer of the same parity with $2\beta_j(S)$.

Conversely, assume that $2 \alpha_j(S)$ and $2 \beta_j(S)$ are integers of the same parity for each $j\in \{0,1,...,n-1\}$. Then $\alpha_j(S)+\beta_j(S)$ is an integer for each $j\in \{0,1,...,n-1\}$. Hence the mixed circulant graph $\text{\text{Circ}}(\mathbb{Z}_n,S)$ is Eisenstein integral.
\end{proof}

\begin{lema}\label{CharaEisensteinIntegral2}
Let $S\subseteq \mathbb{Z}_n$ such that $0\notin S$. Then the mixed circulant graph $\text{\text{Circ}}(\mathbb{Z}_n,S)$ is Eisenstein integral if and only if $\alpha_j(S)$ and $\beta_j(S)$ are integers for each $j\in \{0,1,...,n-1\}$.
\end{lema}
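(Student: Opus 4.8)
The plan is to obtain this characterization by combining Lemma~\ref{CharaEisensteinIntegral} with the elementary remark, established just before that lemma, that $\text{Circ}(\mathbb{Z}_n,S)$ is Eisenstein integral if and only if $\alpha_j(S)+\beta_j(S)\in\mathbb{Z}$ for every $j\in\{0,1,\ldots,n-1\}$.

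The backward direction is immediate: if $\alpha_j(S)$ and $\beta_j(S)$ are integers for each $j$, then $\alpha_j(S)+\beta_j(S)\in\mathbb{Z}$ for each $j$, so by the remark above $\text{Circ}(\mathbb{Z}_n,S)$ is Eisenstein integral. (Equivalently, $2\alpha_j(S)$ and $2\beta_j(S)$ are then both even integers, in particular integers of the same parity, and Lemma~\ref{CharaEisensteinIntegral} applies.)

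For the forward direction, suppose $\text{Circ}(\mathbb{Z}_n,S)$ is Eisenstein integral. First I would invoke Lemma~\ref{CharaEisensteinIntegral} to conclude that $2\alpha_j(S)$ is an integer for each $j$; in particular $\alpha_j(S)\in\mathbb{Q}$. On the other hand, $\alpha_j(S)=\sum_{k\in S\setminus\overline{S}}\omega_n^{jk}$ is a finite sum of roots of unity, hence an algebraic integer. Since a rational algebraic integer is a rational integer, it follows that $\alpha_j(S)\in\mathbb{Z}$ for each $j$. Then, using the remark once more, $\beta_j(S)=\bigl(\alpha_j(S)+\beta_j(S)\bigr)-\alpha_j(S)$ is a difference of integers, hence an integer for each $j$.

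I expect no serious obstacle: the whole argument rests on the single nontrivial fact that a rational algebraic integer is a rational integer, applied to $\alpha_j(S)$, with all remaining steps being bookkeeping built on Lemma~\ref{CharaEisensteinIntegral} and the preceding remark. The one point that needs a little care is that one should pass through $\alpha_j(S)$ (a genuine algebraic integer) rather than argue directly with $\beta_j(S)$, since the weight $\omega=\frac12-\frac{i\sqrt3}{6}$ has minimal polynomial $3x^2-3x+1$ and so $\beta_j(S)$ need not be an algebraic integer; the integrality of $\beta_j(S)$ has to be recovered from that of $\alpha_j(S)$ and of $\alpha_j(S)+\beta_j(S)$.
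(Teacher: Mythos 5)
Your proposal is correct and follows essentially the same route as the paper: both directions reduce to Lemma~\ref{CharaEisensteinIntegral}, and the key step in the forward direction is identical, namely that $\alpha_j(S)$ is a rational algebraic integer (hence an integer), from which the integrality of $\beta_j(S)$ follows. The only cosmetic difference is that the paper recovers $\beta_j(S)\in\mathbb{Z}$ from the parity condition ($2\alpha_j(S)$ even forces $2\beta_j(S)$ even), while you subtract $\alpha_j(S)$ from $\alpha_j(S)+\beta_j(S)$; your cautionary remark that $\beta_j(S)$ itself need not be an algebraic integer is apt and consistent with why the paper also argues through $\alpha_j(S)$.
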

\begin{proof}
By Lemma~\ref{CharaEisensteinIntegral}, it is enough to show that $2 \alpha_j(S)$ and $2 \beta_j(S)$ are integers of the same parity if and only if $\alpha_j(S)$ and $\beta_j(S)$ are integers. If $\alpha_j(S)$ and $\beta_j(S)$ are integers, then clearly $2 \alpha_j(S)$ and $2 \beta_j(S)$ are even integers. Conversely, assume that $2 \alpha_j(S)$ and $2 \beta_j(S)$ are integers of the same parity. Since $\alpha_j(S)$ is an algebraic integer, the integrality of $2 \alpha_j(S)$ implies that $\alpha_j(S)$ is an integer. Thus $2 \alpha_j(S)$ is even, and so by the assumption $2 \beta_j(S)$ is also an even integer. Hence $\beta_j(S)$ is an integer.
\end{proof}

\begin{theorem}\label{MinCharacEisensteinInteg}
Let $S\subseteq \mathbb{Z}_n$ such that $0\notin S$. Then the mixed circulant graph $\text{\text{Circ}}(\mathbb{Z}_n,S)$ is Eisenstein integral if and only if $\text{\text{Circ}}(\mathbb{Z}_n,S)$ is HS-integral.
\end{theorem}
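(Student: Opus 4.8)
The plan is to route both implications through three tools already available: the decomposition $\gamma_j=\lambda_j+\mu_j$ of Lemma~\ref{SpecMixCayGraph}, the splitting of Lemma~\ref{SeperatIntegMixedGraph}, and the Eisenstein-integrality criterion of Lemma~\ref{CharaEisensteinIntegral2} (Eisenstein integral $\iff$ every $\alpha_j(S)$ and $\beta_j(S)$ is an integer). The point is that $\lambda_j=\alpha_j(S)$ by Lemma~\ref{SpecMixCayGraph}, and $\text{Circ}(\mathbb{Z}_n,S\setminus\overline S)$ is a simple graph, so its HS-integrality is just the condition $\alpha_j(S)\in\mathbb Z$ for all $j$; thus everything reduces to comparing $\beta_j(S)$ with the HS-eigenvalue $\mu_j$ of the oriented part $\text{Circ}(\mathbb{Z}_n,\overline S)$.

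First I would record the elementary identities. Writing $P_j=\sum_{k\in\overline S\cup\overline S^{-1}}\omega_n^{jk}$ and $\tau_j=i\sqrt3\sum_{k\in\overline S}(\omega_n^{jk}-\omega_n^{-jk})$, a direct expansion using $\omega_6=\frac{1+i\sqrt3}{2}$, $\omega_6^5=\overline{\omega_6}$ and $\omega=\frac12-\frac{i\sqrt3}{6}$ gives
\[
\mu_j=\tfrac12 P_j+\tfrac12\tau_j,\qquad \beta_j(S)=\tfrac12 P_j-\tfrac16\tau_j,\qquad \beta_j(S)-\beta_{n-j}(S)=-\tfrac13\tau_j .
\]
For ``Eisenstein integral $\Rightarrow$ HS-integral'': by Lemma~\ref{CharaEisensteinIntegral2} all $\alpha_j(S),\beta_j(S)\in\mathbb Z$, so $\text{Circ}(\mathbb{Z}_n,S\setminus\overline S)$ is HS-integral. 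From $\beta_j(S)-\beta_{n-j}(S)=-\tfrac13\tau_j\in\mathbb Z$ and Lemma~\ref{Sqrt3NecessIntSum} we get $P_j\in\mathbb Z$, and $\tau_j=-3(\beta_j(S)-\beta_{n-j}(S))=3s_j$ with $s_j\in\mathbb Z$. Then $\beta_j(S)=\frac{P_j-s_j}{2}\in\mathbb Z$ forces $P_j\equiv s_j\pmod 2$, so $\mu_j=\frac{P_j+3s_j}{2}=\frac{P_j+s_j}{2}+s_j\in\mathbb Z$; hence $\text{Circ}(\mathbb{Z}_n,\overline S)$ is HS-integral and Lemma~\ref{SeperatIntegMixedGraph} completes this direction.

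For ``HS-integral $\Rightarrow$ Eisenstein integral'': by Theorem~\ref{CharaOfIntegMixedGraph}, $S\setminus\overline S=\bigcup_{d\in\mathscr D_1}G_n(d)$ (so $\alpha_j(S)$ is a sum of Ramanujan sums, hence an integer), and either $\overline S=\emptyset$ (whence $\beta_j(S)=0$) or $n\equiv0\pmod3$ and $\overline S=\bigcup_{d\in\mathscr D_2}S_n(d)$ with $S_n(d)\in\{G_{n,3}^1(d),G_{n,3}^2(d)\}$, $d\mid\frac n3$. As $\overline S$ is the disjoint union of the $S_n(d)$ and $\beta_j$ is additive over this union, it suffices to check that $\beta_j(G_{n,3}^1(d))$ and $\beta_j(G_{n,3}^2(d))$ are integers. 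Using $G_{n,3}^r(d)=dG_{\frac nd,3}^r(1)$, $(G_{n,3}^1(d))^{-1}=G_{n,3}^2(d)$ and $G_n(d)=G_{n,3}^1(d)\cup G_{n,3}^2(d)$ (disjoint), one computes
\[
\beta_j\!\left(G_{n,3}^1(d)\right)=\tfrac12 C_{\frac nd}(j)-\tfrac16 T_{\frac nd}(j),\qquad \beta_j\!\left(G_{n,3}^2(d)\right)=\tfrac12 C_{\frac nd}(j)+\tfrac16 T_{\frac nd}(j).
\]
Since $d\mid\frac n3$ we have $\frac nd\equiv0\pmod3$, so Lemma~\ref{SameParitySum} says $C_{\frac nd}(j)$ and $\frac{T_{\frac nd}(j)}{3}$ are integers of the same parity; therefore each of the above equals $\frac{C_{\frac nd}(j)\mp T_{\frac nd}(j)/3}{2}\in\mathbb Z$. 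Hence $\alpha_j(S),\beta_j(S)\in\mathbb Z$ for all $j$, and Lemma~\ref{CharaEisensteinIntegral2} gives that $\text{Circ}(\mathbb{Z}_n,S)$ is Eisenstein integral.

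The main obstacle is the arithmetic bookkeeping of the $2$- and $3$-divisibilities. In the forward direction the delicate point is that $\beta_j(S)\in\mathbb Z$ forces the parity coincidence $P_j\equiv s_j\pmod 2$, which is exactly what makes $\mu_j=\frac{P_j+3s_j}{2}$ land in $\mathbb Z$; in the backward direction the work is to translate the structure theorem for $\overline S$ into the explicit formula $\beta_j(G_{n,3}^r(d))=\tfrac12 C_{\frac nd}(j)\mp\tfrac16 T_{\frac nd}(j)$ and to recognize that the same-parity statement of Lemma~\ref{SameParitySum} is precisely the input that clears the denominator $6$.
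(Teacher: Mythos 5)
Your proof is correct and follows essentially the same route as the paper: both directions reduce via Lemma~\ref{CharaEisensteinIntegral2} to the integrality of $\alpha_j(S)$ and $\beta_j(S)$, and your backward direction is exactly the paper's computation $\beta_j(S)=\frac12\sum_{d\in\mathscr{D}_2}\bigl(C_{\frac{n}{d}}(j)\pm\frac13T_{\frac{n}{d}}(j)\bigr)$ combined with Lemma~\ref{SameParitySum}. Your forward direction replaces the paper's appeal to Theorem~\ref{CharaOfIntegMixedGraph} (via Theorem~\ref{2006integral} and Lemma~\ref{CharaNewIntegSum}) with a direct parity computation showing $\mu_j=\frac{P_j+3s_j}{2}\in\mathbb{Z}$ followed by Lemma~\ref{SeperatIntegMixedGraph}, but this rests on the same underlying lemmas (notably Lemma~\ref{Sqrt3NecessIntSum}) and is only a cosmetic variation.
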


\begin{proof} 
By Lemma~\ref{CharaEisensteinIntegral2}, it is enough to show that $ \alpha_j(S)$ and $\beta_j(S)$ are integers for each $j\in\{0,1,...,n-1\}$ if and only if $\text{\text{Circ}}(\mathbb{Z}_n,S)$ is HS-integral. Note that $\alpha_j(S)$ is an eigenvalue of the circulant graph $\text{\text{Circ}}(\mathbb{Z}_n,S\setminus \overline{S})$. By Theorem~\ref{2006integral}, $\alpha_j(S)$ is an integer for each $j\in\{0,1,...,n-1\}$ if and only if $S \setminus \overline{S} = \bigcup\limits_{d\in \mathscr{D}_1}G_n(d)$ for some $\mathscr{D}_1 \subseteq \{ d: d\mid n\}$. Assume that $\alpha_j(S)$ and $\beta_j(S)$ are integers for each $j$. Then $-\frac{i\sqrt{3}}{3}\sum\limits_{k \in \overline{S}} (\omega_n^{jk}- \omega_n^{-jk})=\beta_j(S)-\beta_{n-j}(S)$ is also an integer for each $j$. Using Theorem~\ref{2006integral} and Lemma~\ref{CharaNewIntegSum}, we see that $S \setminus \overline{S}$ and $\overline{S}$ satisfy the conditions of Theorem~\ref{CharaOfIntegMixedGraph}. Hence $\text{\text{Circ}}(\mathbb{Z}_n,S)$ is HS-integral.

Conversely, assume that $\text{\text{Circ}}(\mathbb{Z}_n,S)$ is HS-integral. Then $\text{\text{Circ}}(\mathbb{Z}_n,S\setminus \overline{S})$ is integral, and hence $\alpha_j(S)$ is an integer for each $j\in\{0,1,...,n-1\}$. By Theorem~\ref{CharaOfIntegMixedGraph}, we have
\begin{equation*}
			\overline{S}= \left\{ \begin{array}{ll}
				\emptyset & \text{ if } n\not\equiv 0\Mod 3 \\ 
				\bigcup\limits_{d\in \mathscr{D}_2}S_n(d) & \text{ if } n\equiv 0\Mod 3,
			\end{array}\right.
		\end{equation*}
where $\mathscr{D}_2 \subseteq \{ d: d\mid \frac{n}{3}\}$ and $S_n(d)\in \{ G_{n,3}^{1}(d) , G_{n,3}^{2}(d)\}$. Then 
\begin{equation*}
\begin{split}
\beta_j(S) &= \frac{1}{2} \sum\limits_{k \in \overline{S} \cup \overline{S}^{-1}} \omega_n^{jk} - \frac{1}{6}  \sum\limits_{k \in \overline{S}} i\sqrt{3} (\omega_n^{jk} - \omega_n^{-jk})\\
&= \frac{1}{2} \sum\limits_{d\in \mathcal{D}_2}\sum\limits_{k \in G_n(d)} \omega_n^{jk} + \frac{1}{6} \sum\limits_{d\in \mathcal{D}_2}\sum\limits_{k \in S_n(d)} i\sqrt{3} (\omega_n^{jk} - \omega_n^{-jk})\\
&= \frac{1}{2} \sum\limits_{d\in \mathcal{D}_2}C_{\frac{n}{d}}(j) \pm \frac{1}{6} \sum\limits_{d\in \mathcal{D}_2}T_{\frac{n}{d}}(j)\\
&= \frac{1}{2} \sum\limits_{d\in \mathcal{D}_2} \left( C_{\frac{n}{d}}(j) \pm \frac{1}{3} T_{\frac{n}{d}}(j)\right).
\end{split} 
\end{equation*}
By Lemma~\ref{SameParitySum}, $C_{\frac{n}{d}}(j) \pm \frac{1}{3} T_{\frac{n}{d}}(j)$ are even integers for all $d\in \mathcal{D}_2$. Hence $\beta_j(S)$ is an integer for each $j\in\{0,1,...,n-1\}$.
\end{proof}
The following example illustrates Theorem~\ref{MinCharacEisensteinInteg}.
\begin{ex}
Consider the HS-integral graph $\text{\text{Circ}}(\mathbb{Z}_{12}, S)$ of Example~\ref{ex2}. By Theorem~\ref{MinCharacEisensteinInteg}, the graph $\text{\text{Circ}}(\mathbb{Z}_{12}, S)$ is Eisenstein integral. Indeed, the eigenvalues of  $\text{\text{Circ}}(\mathbb{Z}_{12}, S)$ are obtained as
\begin{equation*}
			\begin{split}
\gamma_j=&2\cos\left(\frac{\pi j}{3}\right) + \cos\left(\frac{5\pi j}{6}\right) + \cos\left(\frac{11\pi j}{6}\right) + \frac{1}{\sqrt{3}} \left[ \sin\left(\frac{5\pi j}{6}\right) + \sin\left(\frac{11\pi j}{6}\right) \right]\\
&+ \omega_3 \frac{2}{\sqrt{3}}\left[ \sin\left(\frac{5\pi j}{6}\right) + \sin\left(\frac{11\pi j}{6}\right) \right]~~\text{for each } j \in \mathbb{Z}_{12}.
			\end{split}\end{equation*}
One can see that $\gamma_0=4, \gamma_1=1, \gamma_2=-1-2\omega_3, \gamma_3=-2, \gamma_4=-3-2\omega_3, \gamma_5=1, \gamma_6=0, \gamma_7=1,$ $ \gamma_8=-1+2\omega_3, \gamma_9=-2, \gamma_{10}=1+2\omega_3$ and $\gamma_{11}=1$. Thus $\gamma_j$ is an Eisenstein integer for each $j \in \mathbb{Z}_{12}$. \qed
\end{ex}

%%%%%%%%%%%%%%%%%%%%%%%%%%%%%%%%%%%%%%%%%%%%%%%%%%%%%%%%%%%%%%%%%%%%%%%%%%%%%%%%%%%%%%%%%%%%%%%%%%%%%%%%%%%%%%%%	
	\section{Eigenvalues and HS-eigenvalues of unitary oriented circulant graphs in terms of generalized M$\ddot{\text{o}}$bius function}\label{airthmetic}

	Let $n \equiv 0 \Mod 3$. The underlying graph of $\text{\text{Circ}}(\mathbb{Z}_n,G_{n,3}^1(1))$ is known as an unitary simple circulant graph. The graph $\text{\text{Circ}}(\mathbb{Z}_n,G_{n,3}^1(1))$ is called an \textit{unitary oriented circulant graph} (\textit{UOCG}). Using Theorem~\ref{CharaOfIntegMixedGraph} and Theorem~\ref{MinCharacEisensteinInteg}, UOCG is an HS-integral as well as Eisenstein integral graph. The eigenvalues and the  HS-eigenvalues of UOCG are $\frac{C_n(j)}{2} + \frac{ T_n(j)}{6}+ \omega_3^2 \frac{T_n(j)}{3}$ and $\frac{C_n(j)}{2} + \frac{ T_n(j)}{2}$, respectively, for each $j \in \{0,1,\ldots, n-1\}$. Note that $C_n(j)$ is an eigenvalue of the underlying graph of UOCG for each $j \in \{0,1,\ldots, n-1\}$. Using Lemma~\ref{Tn(q)SumEqualTo3TimesSum00} and Lemma~\ref{Tn(q)SumEqualTo3TimesSum}, we can express $T_n(j)$ in terms of $C_{\frac{n}{3}}(j)$. This, in turn, express the eigenvalues and the HS-eigenvalues of UOCG in terms of Ramanujan sums. The Ramanujan sum $C_n(j)$ is well known~\cite{murty2008problems} in terms of M$\ddot{\text{o}}$bius function. In particular, we have
	\[C_n(j)=\sum\limits_{d \mid \delta} d \mu(n/d)=\frac{\mu(n/\delta) \varphi(n)}{\varphi(n/\delta)},\]
where $\delta=\gcd(n,j)$. We attempt to obtain similar expression for $T_n(j)$ in terms of generalized M$\ddot{\text{o}}$bius function. That, in turn, will express the eigenvalues and the HS-eigenvalues of UOCG in terms of generalized M$\ddot{\text{o}}$bius function.

	The classical M$\ddot{\text{o}}$bius function $\mu(n)$ is defined by 
	$$\mu(n)= \left\{ \begin{array}{cl}
		1& \mbox{if } n=1 \\ 
		(-1)^k & \mbox{if $n$ is a product of $k$ distinct primes} \\
		0   &\mbox{otherwise.} 
	\end{array}\right. $$
	
	Let $\delta$ be the indicator function defined by
	$$\delta(n)= \left\{ \begin{array}{lll}
		1& \mbox{if}
		& n=1 \\ 0 & \mbox{if} & n>1 . 
	\end{array}\right. $$
	
	\begin{theorem} \cite{murty2008problems}~~~ $ \sum\limits_{d\mid n} \mu(d) =\delta(n).$
	\end{theorem}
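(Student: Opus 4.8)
The plan is to prove the identity directly from the definition of $\mu$, treating the trivial case $n=1$ separately from the main case $n>1$, and reducing the divisor sum to a binomial sum.

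First I would dispose of $n=1$: its only divisor is $1$, so $\sum_{d\mid 1}\mu(d)=\mu(1)=1=\delta(1)$, as required. Next, for $n>1$, write $n=p_1^{a_1}\cdots p_k^{a_k}$ with $k\geq 1$ and the $p_i$ distinct primes. The key observation is that $\mu(d)=0$ whenever $d$ is divisible by the square of a prime, so the only divisors $d$ of $n$ that contribute to $\sum_{d\mid n}\mu(d)$ are the squarefree ones; these are precisely the $2^k$ products $\prod_{i\in I}p_i$ indexed by subsets $I\subseteq\{1,\ldots,k\}$, and for such a $d$ one has $\mu(d)=(-1)^{|I|}$. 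Therefore
$$\sum_{d\mid n}\mu(d)=\sum_{I\subseteq\{1,\ldots,k\}}(-1)^{|I|}=\sum_{j=0}^{k}\binom{k}{j}(-1)^j=(1-1)^k=0=\delta(n),$$
where the penultimate equality is the binomial theorem. This finishes the argument.

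If one prefers an arithmetic-function flavour consistent with the surrounding section, an alternative is to note that $\mu$ is multiplicative, hence so is $f(n):=\sum_{d\mid n}\mu(d)$; then compute $f(p^a)=\mu(1)+\mu(p)=1-1=0$ for every prime power with $a\geq 1$, so that $f(n)=\prod_i f(p_i^{a_i})=0$ for $n>1$, while $f(1)=1$. Either way there is no genuine obstacle; the only step requiring a moment's care is the reduction from a sum over all divisors to a sum over subsets of the prime support of $n$, i.e.\ the remark that the non-squarefree divisors contribute nothing.
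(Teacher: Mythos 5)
Your argument is correct. Note that the paper itself gives no proof of this statement — it is quoted from \cite{murty2008problems} as a classical fact — so there is nothing to compare against; your binomial-theorem computation over the squarefree divisors (equivalently, the multiplicativity argument reducing to $f(p^a)=1-1=0$) is the standard textbook proof and is complete, including the correct separate treatment of $n=1$.
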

	
	E. Cohen \cite{cohen} introduced a generalized M$\ddot{\text{o}}$bius inversion formula of arbitrary direct factor sets. Let $P$ and $Q$ be two non-empty subsets of $\mathbb{N}$ such that if $n_1,n_2 \in \mathbb{N}$ with $\gcd(n_1,n_2) = 1$, then $n_1n_2 \in P$ (resp. $n_1n_2 \in Q$) if and only if $ n_1,n_2 \in P$ (resp. $ n_1,n_2 \in Q$). If each integer $n \in \mathbb{N}$ possesses a unique factorization of the form $n = ab$ with $a \in P, b \in Q$, then the sets $P$ and $Q$ are called direct factor sets of $\mathbb{N}$. In what follows, $P$ will denote such a direct factor set with (conjugate) factor set $Q$.
	The M$\ddot{\text{o}}$bius function can be generalized to an arbitrary direct factor set $P$ by setting
 $$\mu_P(n)= \sum_{d\mid n,d\in P} \mu\bigg(\frac{n}{d}\bigg),$$ where $\mu$ is the classical M$\ddot{\text{o}}$bius function. For example, $\mu(n) = \mu_{\{1\}}(n)$ and $\mu_{\mathbb{N}}(n) =\delta(n)$. 
	
	\begin{theorem}\cite{cohen} ~~~$ \sum\limits_{d\mid n, d\in Q} \mu_P\left( \frac{n}{d} \right)=\delta(n).$  
	\end{theorem}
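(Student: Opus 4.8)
The plan is to prove the identity by unfolding the definition of $\mu_P$, regrouping the resulting double sum, and collapsing it via the defining property of direct factor sets. Concretely, I would start from the definition $\mu_P(N)=\sum_{e\mid N,\, e\in P}\mu(N/e)$, specialize it at $N=n/d$, and substitute into the left-hand side to obtain
\[
\sum_{\substack{d\mid n\\ d\in Q}} \mu_P\!\left(\frac{n}{d}\right)
=\sum_{\substack{d\mid n\\ d\in Q}}\;\sum_{\substack{e\mid (n/d)\\ e\in P}}\mu\!\left(\frac{n}{de}\right).
\]
Next I would regroup this double sum according to the value of the product $m=de$. Since the conditions ``$d\mid n$ and $e\mid n/d$'' are equivalent to ``$m=de\mid n$ with $d\mid m$'', the right-hand side rewrites as $\sum_{m\mid n}\bigl(\#\{(d,e):\, d\in Q,\ e\in P,\ de=m\}\bigr)\,\mu(n/m)$.

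The key step is the observation that for every positive integer $m$ the set $\{(d,e):\, d\in Q,\ e\in P,\ de=m\}$ is a singleton: this is precisely the hypothesis that $P$ and $Q$ are direct factor sets of $\mathbb{N}$, i.e. each $m\in\mathbb{N}$ has a unique factorization $m=ba$ with $b\in Q$ and $a\in P$. (Applying this with $m=1$ forces $1\in P\cap Q$, so the degenerate ranges cause no trouble.) Hence every inner count equals $1$, and the double sum collapses to $\sum_{m\mid n}\mu(n/m)$.

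Finally I would reindex $m\mapsto n/m$ to rewrite this sum as $\sum_{m\mid n}\mu(m)$, which is $\delta(n)$ by the classical M\"obius identity $\sum_{d\mid n}\mu(d)=\delta(n)$ stated just above. I do not expect a genuine obstacle: the only delicate point is the bookkeeping of the double sum, namely checking that the index ranges ``$d\mid n$, $e\mid n/d$'' and ``$m=de\mid n$'' really do correspond to the factorizations $m=de$ running over all divisors $m$ of $n$, and being careful to invoke the direct-factor hypothesis for the integer $m$ rather than for $n$ itself.
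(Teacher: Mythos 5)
Your proof is correct. Note that the paper itself gives no argument for this statement --- it is quoted from Cohen's paper \cite{cohen} as a known result --- so there is nothing internal to compare against; your double-sum expansion, regrouping by $m=de$, and appeal to the uniqueness of the factorization $m=ab$ with $a\in P$, $b\in Q$ (which is exactly the direct-factor hypothesis, applied to the divisor $m$ rather than to $n$) is the standard proof of this generalized M\"obius identity, and the final reduction to $\sum_{m\mid n}\mu(m)=\delta(n)$ is sound. The one bookkeeping point you flag --- that ``$d\mid n$ and $e\mid n/d$'' ranges over exactly the pairs with $de\mid n$ --- is indeed the only thing to check, and it holds.
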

	
	\begin{theorem}\cite{cohen}\label{cohenMobiInvForm} If $f(n)$ and $g(n)$ are arithmetic functions then 
		$$f(n)= \sum_{d\mid n, d\in Q} g\left(\frac{n}{d} \right) \textnormal{ if and only if } g(n)= \sum_{d\mid n} f(d) \mu_P\left( \frac{n}{d}\right).$$
	\end{theorem}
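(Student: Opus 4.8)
The plan is to derive both directions of the equivalence from the orthogonality relation $\sum_{d\mid n,\, d\in Q}\mu_P(n/d)=\delta(n)$ (stated just above) together with the direct-factor structure of $P$ and $Q$, using exactly the interchange-of-summation argument that proves classical M\"obius inversion. No ingredient beyond that identity is needed.

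First I would prove the forward implication. Assuming $f(n)=\sum_{d\mid n,\, d\in Q}g(n/d)$, I substitute this into the right-hand side of the second formula and write each divisor $d$ of $n$ as $d=em$ with $e\in Q$ and $e\mid d$ (so $m=d/e$ and $em\mid n$). Grouping the resulting double sum by $m$ gives
\begin{align*}
\sum_{d\mid n}f(d)\,\mu_P\!\left(\tfrac{n}{d}\right)
&=\sum_{d\mid n}\mu_P\!\left(\tfrac{n}{d}\right)\sum_{\substack{e\mid d\\ e\in Q}}g\!\left(\tfrac{d}{e}\right)\\
&=\sum_{m\mid n}g(m)\sum_{\substack{e\in Q\\ e\mid n/m}}\mu_P\!\left(\frac{n/m}{e}\right)\\
&=\sum_{m\mid n}g(m)\,\delta\!\left(\tfrac{n}{m}\right)=g(n),
\end{align*}
where the third equality is the orthogonality relation applied to $n/m$, and the last step uses $\delta(n/m)=1$ precisely when $m=n$. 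This yields $g(n)=\sum_{d\mid n}f(d)\,\mu_P(n/d)$.

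The converse is entirely symmetric: starting from $g(n)=\sum_{d\mid n}f(d)\,\mu_P(n/d)$, I substitute into $\sum_{d\mid n,\, d\in Q}g(n/d)$, reindex the nested sum by the inner divisor $e$ appearing in $f(e)$ (so that $d\in Q$ ranges over divisors of $n/e$ and $(n/d)/e=(n/e)/d$), collect terms, and again invoke $\sum_{d\mid m,\, d\in Q}\mu_P(m/d)=\delta(m)$ to collapse the double sum to $f(n)$. The only point needing care — and the ``main obstacle,'' such as it is — is checking that in each reindexing the pair of summation variables corresponds bijectively to a factorization compatible with the membership constraints defining $\mu_P$ and $Q$; the direct-factor hypothesis on $P,Q$ makes this bookkeeping routine, so no genuine difficulty arises.
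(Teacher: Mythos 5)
Your proof is correct. The paper itself offers no argument for this theorem --- it is quoted directly from Cohen's 1959 paper \cite{cohen} --- so there is nothing internal to compare against, but your derivation is the standard one: both directions follow from interchanging the order of summation in the double sum and collapsing the inner sum via the orthogonality relation $\sum_{d\mid n,\ d\in Q}\mu_P(n/d)=\delta(n)$ stated just before the theorem. The reindexings you describe (pairs $(d,e)$ with $e\in Q$, $e\mid d\mid n$ corresponding bijectively to pairs $(m,e)$ with $m\mid n$, $e\in Q$, $e\mid n/m$ via $m=d/e$, and the analogous one in the converse) are valid and need no appeal to the direct-factor structure beyond what the orthogonality relation already encodes, so the proof is complete as written.
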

	
	For the remaining part of this section, we consider the direct factors $P=\{2^k: k\geq 0 \}$, and $Q$, the set of all odd natural numbers.
	
	\begin{lema} Let $P=\{2^k: k\geq 0 \}$. Then
		$$\mu_P(n)= \left\{ \begin{array}{lll}
		0& \mbox{if $n$ is even}
		\\ \mu(n) & \mbox{if $n$ is odd.} 
	\end{array}\right. $$
	\end{lema}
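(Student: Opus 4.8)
The plan is to unwind the definition $\mu_P(n)=\sum_{d\mid n,\,d\in P}\mu(n/d)$ directly. Writing $n=2^am$ with $m$ odd, the divisors of $n$ that lie in $P=\{2^k:k\geq 0\}$ are precisely $2^0,2^1,\ldots,2^a$, so that
$$\mu_P(n)=\sum_{k=0}^{a}\mu\!\left(2^{a-k}m\right).$$
First I would dispose of the case $n$ odd, i.e.\ $a=0$: the sum reduces to the single term $\mu(m)=\mu(n)$, which is the assertion.

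Next, suppose $n$ is even, i.e.\ $a\geq 1$. I would split the sum according to the power of $2$ in each term. Since $4\mid 2^{j}$ whenever $j\geq 2$, every term $\mu(2^{a-k}m)$ with $a-k\geq 2$ vanishes, leaving only the contributions from $k=a$ and $k=a-1$, namely $\mu(m)+\mu(2m)$. Using multiplicativity of $\mu$ together with $\gcd(2,m)=1$ gives $\mu(2m)=\mu(2)\mu(m)=-\mu(m)$, so $\mu(m)+\mu(2m)=0$, hence $\mu_P(n)=0$. This settles the even case and completes the proof.

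A cleaner packaging of the same computation, which I would mention as an alternative, is to note that $\mu_P=\mu*\mathbf{1}_P$ is a Dirichlet convolution of two multiplicative functions (the indicator $\mathbf{1}_P$ of $P$ is multiplicative because a coprime pair whose product is a power of $2$ must contain $1$), so $\mu_P$ is multiplicative, and it suffices to evaluate it on prime powers: on an odd prime power $p^j$ the only $2$-power divisor is $1$, so $\mu_P(p^j)=\mu(p^j)$, while $\mu_P(2^j)=\sum_{i=0}^{j}\mu(2^i)=\mu(1)+\mu(2)=0$ for $j\geq 1$ and $\mu_P(1)=1$; multiplying these local values over $n=2^am$ recovers the formula. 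Either way the argument is entirely elementary; the only point needing a moment's care is the vanishing of $\mu$ on multiples of $4$, which is exactly what collapses the sum.
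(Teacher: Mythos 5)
Your proof is correct. The paper actually states this lemma without any proof at all, treating it as an immediate consequence of the definition $\mu_P(n)=\sum_{d\mid n,\ d\in P}\mu(n/d)$; your direct computation (writing $n=2^am$ with $m$ odd, noting that all terms with $4\mid 2^{a-k}$ vanish, and cancelling $\mu(m)+\mu(2m)=0$ in the even case) is exactly the verification that was left implicit, and your alternative via multiplicativity of the Dirichlet convolution $\mu*\mathbf{1}_P$ is equally valid.
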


Note that if $n\equiv 0 \Mod {3}$ then $D_{n,3}^2 = D_{\frac{n}{3},3}^2$.
	
		\begin{theorem}\label{sumformulaThird} 
		Let $n=3m$  with $m$ an odd integer and $D_{n,3}^2=\emptyset$. Then 
		\begin{align*}
		T_n(j)=\sum_{\substack{d \mid \frac{n}{3} \\ \frac{j}{d}\equiv 1 \Mod {3} }} -3d \mu \left(\frac{n}{3d}\right) + \sum_{\substack{d \mid \frac{n}{3} \\ \frac{j}{d}\equiv 2 \Mod {3} }} 3d \mu \left(\frac{n}{3d}\right).
		\end{align*}
	\end{theorem}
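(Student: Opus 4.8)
The plan is to deduce the statement from Lemma~\ref{Tn(q)SumEqualTo3TimesSum00} together with the classical closed form $C_n(j)=\sum_{d\mid\gcd(n,j)}d\,\mu(n/d)$ of the Ramanujan sum. Here, as throughout this section, $n=3m$ is understood with $3\nmid m$.

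First I would unpack the hypothesis $D_{n,3}^2=\emptyset$. By the remark preceding the theorem, $D_{n,3}^2=D_{m,3}^2$, so no divisor of $m$ is $\equiv 2\Mod 3$; and since $3\nmid m$, no divisor of $m$ is $\equiv 0\Mod 3$ either. Hence \emph{every} divisor of $m$ is $\equiv 1\Mod 3$. Taking the divisor $d=m$ gives $m\equiv 1\Mod 3$, and if $m$ were even then the divisor $d=2$ would have to be $\equiv 1\Mod 3$, which is false; so ``$m$ odd'' is in fact automatic from $D_{n,3}^2=\emptyset$.

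Because $m\equiv 1\Mod 3$, Lemma~\ref{Tn(q)SumEqualTo3TimesSum00} gives $T_n(j)=-2\sqrt{3}\,\Im(\omega_3^{\,j})\,C_{m}(j)$. Since $\omega_3^{\,j}$ depends only on $j$ modulo $3$ and $\omega_3=\frac{-1+i\sqrt3}{2}$, the factor $-2\sqrt3\,\Im(\omega_3^{\,j})$ equals $0,-3,3$ according as $j\equiv 0,1,2\Mod 3$; thus $T_n(j)=0$ when $3\mid j$, $T_n(j)=-3\,C_m(j)$ when $j\equiv 1\Mod 3$, and $T_n(j)=3\,C_m(j)$ when $j\equiv 2\Mod 3$. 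Next I would substitute $C_m(j)=\sum_{d\mid m,\, d\mid j}d\,\mu(m/d)$ and match this, case by case, with the right-hand side of the theorem. The key point is that the summation condition ``$d\mid\frac{n}{3}$ with $\frac{j}{d}\equiv r\Mod 3$'' tacitly requires $d\mid j$, hence $d\mid\gcd(m,j)$; and every such $d$, being a divisor of $m$, is $\equiv 1\Mod 3$, so $j=d\cdot\frac{j}{d}\equiv\frac{j}{d}\Mod 3$. Consequently: if $3\mid j$ then no $d$ contributes to either sum and the right-hand side is $0$; if $j\equiv 1\Mod 3$ the first sum runs over all $d\mid\gcd(m,j)$ and the second is empty, giving $-3\sum_{d\mid\gcd(m,j)}d\,\mu(m/d)=-3\,C_m(j)$; and if $j\equiv 2\Mod 3$ one symmetrically obtains $3\,C_m(j)$. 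In each case this coincides with the value of $T_n(j)$ found above, which proves the theorem.

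I do not anticipate a genuine obstacle: all the analytic content sits in Lemma~\ref{Tn(q)SumEqualTo3TimesSum00}, and the remaining work is the two elementary observations that $D_{n,3}^2=\emptyset$ forces every divisor of $m$ to be $\equiv 1\Mod 3$ and that, for such divisors, $\frac{j}{d}\equiv j\Mod 3$ — which is exactly what collapses the two indexed sums into $\mp 3\,C_m(j)$. The only subtlety worth flagging explicitly in the write-up is the reading of the notation $\frac{j}{d}\equiv r\Mod 3$ as implicitly carrying the divisibility $d\mid j$.
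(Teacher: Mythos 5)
Your proposal is correct, but it takes a genuinely different route from the paper. The paper's proof never invokes Lemma~\ref{Tn(q)SumEqualTo3TimesSum00}: it introduces the auxiliary function $f_n(j)=\sum_{a\in M_{n,3}^1(1)}i\sqrt{3}(\omega_n^{aj}-\omega_n^{-aj})$, evaluates it in closed form ($-n$, $n$ or $0$ according to $j$ modulo $n$), uses the decomposition of $M_{n,3}^1(1)$ from Lemma~\ref{SecLemmaSetEqua} together with $D_{n,3}^2=\emptyset$ to write $f_n(j)=\sum_{d\in D_{n,3}^1}T_{n/d}(j)$, and then applies Cohen's generalized M\"obius inversion (Theorem~\ref{cohenMobiInvForm}) with $P=\{2^k\}$ and $Q$ the odd numbers --- this is precisely where the hypothesis ``$m$ odd'' enters, via $D_{n,3}^1\subseteq Q$. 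You instead reduce $T_n(j)$ to $0$, $-3C_m(j)$ or $3C_m(j)$ via Lemma~\ref{Tn(q)SumEqualTo3TimesSum00}, substitute the H\"older--von Sterneck closed form $C_m(j)=\sum_{d\mid\gcd(m,j)}d\,\mu(m/d)$ quoted at the start of the section, and match the two sides combinatorially. Your route buys several things: it avoids the generalized inversion machinery altogether (whose application in the paper is in fact somewhat loose, since $D_{n,3}^1$ is the set of divisors of $m$, not the set of all odd divisors of $n$, so the inversion really has to be set up on the lattice of divisors of $m$); it makes completely transparent why $D_{n,3}^2=\emptyset$ is needed (it forces every divisor of $m$ to be $\equiv 1\Mod 3$, which is exactly what collapses the two indexed sums); and it shows the hypothesis ``$m$ odd'' is redundant. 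The paper's route has the virtue of being structurally parallel to the proof of Theorem~\ref{sumformula} for the case $6\mid n$ and of producing the M\"obius expression directly by inversion, in keeping with the section's stated aim. One point you handle correctly but that deserves the explicit flag you give it: the theorem as literally stated does not exclude $3\mid m$, yet the conclusion fails for, e.g., $n=9$ (where $D_{9,3}^2=\emptyset$ but $T_9(1)=0$ while the right-hand side equals $3$); both your argument and the paper's tacitly assume $3\nmid m$, as in the rest of the section, and your reading is the intended one.
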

	\begin{proof}
		Let \begin{equation*}
			\begin{split}
				f_n(j) =\sum_{a\in M_{n,3}^1(1) } i\sqrt{3}(\omega_n^{aj}- \omega_n^{-aj}) 
				 = \left\{ \begin{array}{rl}
			-n& \mbox{ if }  j\equiv \frac{n}{3} \Mod n \\
			n& \mbox{ if }  j\equiv \frac{2n}{3} \Mod n \\
			0 &\mbox{ otherwise.} 
		\end{array}\right.
			\end{split} 
		\end{equation*}
		Using Lemma~\ref{SecLemmaSetEqua}, we have
		\begin{equation*}
			\begin{split}
				f_n(j)= \sum_{a\in M_{n,3}^1(1) } i\sqrt{3}(\omega_n^{aj}- \omega_n^{-aj}) &=  \sum_{d\in D_{n,3}^1} \sum_{a\in G_{n,3}^1(d) } i\sqrt{3}(\omega_n^{aj}- \omega_n^{-aj})\\ 
				&=  \sum_{d\in D_{n,3}^1} \sum_{a\in dG_{\frac{n}{d},3}^1(1) } i\sqrt{3}(\omega_n^{aj}- \omega_n^{-aj})\\
				&= \sum_{d\in D_{n,3}^1} \sum_{a\in G_{\frac{n}{d},3}^1(1) } i\sqrt{3}[(\omega_n^d)^{aj}- (\omega_n^d)^{-aj}] 
				= \sum_{d\in D_{n,3}^1} T_{\frac{n}{d}}(j).
			\end{split} 
		\end{equation*} 
In the last equation, we have used the fact that $\omega_n^d=\exp(\frac{2\pi i}{n/d})$ is a primitive $\frac{n}{d}$-th root of unity. Since $D_{n,3}^1 \subseteq Q$, from Theorem~\ref{cohenMobiInvForm} we have

		\begin{align*}
				T_n(j)=&\sum_{\substack{d \mid n \\ j\equiv \frac{d}{3} \Mod d }} f_d(j) \mu_P\left(\frac{n}{d}\right) + \sum_{\substack{d \mid n \\ j\equiv \frac{2d}{3} \Mod d }} f_d(j) \mu_P\left(\frac{n}{d}\right) \\
				=&\sum_{\substack{d \mid n \\ j\equiv \frac{d}{3} \Mod d }} -d \mu_P\left(\frac{n}{d}\right) + \sum_{\substack{d \mid n \\ j\equiv \frac{2d}{3} \Mod d }} d \mu_P\left(\frac{n}{d}\right)  \\
				=&\sum_{\substack{3d \mid n \\ j\equiv d \Mod {3d} }} -3d \mu\left(\frac{n}{3d}\right) + \sum_{\substack{3d \mid n \\ j\equiv 2d \Mod {3d} }} 3d \mu\left(\frac{n}{3d}\right) \\
				=&\sum_{\substack{d \mid \frac{n}{3} \\ \frac{j}{d}\equiv 1 \Mod {3} }} -3d \mu\left(\frac{n}{3d}\right) + \sum_{\substack{d \mid \frac{n}{3} \\ \frac{j}{d}\equiv 2 \Mod {3} }} 3d \mu\left(\frac{n}{3d}\right).
		\end{align*} 		
	\end{proof}

The case that $n=3m$ with $m$ an even integer is not covered in Theorem \ref{sumformulaThird}. Now assume that $n=3m$ with $m$ an even integer, so that $n\equiv 0 \Mod 6$.  For a divisor $d$ of $\frac{n}{6},r\in \{1,5\}$ and $g\in \mathbb{Z}$, define the following sets:
\begin{align*}
& M_{n,6}^r(d) =  \{dk: 0\leq dk < n , k \equiv r \Mod 6 \};\\
& G_{n,6}^r(d)= \{ dk: 1\leq dk < n ,k\equiv r \Mod 6, \gcd(dk,n )= d \};~\text{ and} \\
& D_{g,6}^r = \{ k: k \text{ divides } g, k \equiv r \Mod 6\} .
\end{align*}

	\begin{lema}\label{SecLemmaSetEquaForSix} Let $n\equiv 0 \Mod 6$, $d$ divides $\frac{n}{6}$ and $g= \frac{n}{6d}$. Then the following hold:
		\begin{enumerate}[label=(\roman*)]
			\item $G_{n,6}^1(d) \cap G_{n,6}^5(d)=\emptyset$;
			\item $G_n(d)=G_{n,6}^1(d) \cup G_{n,6}^5(d)$;
			\item $M_{n,6}^1(d) =\bigg( \bigcup\limits_{h\in D_{g,6}^1} G_{n,6}^1(hd) \bigg) \cup \bigg( \bigcup\limits_{h\in D_{g,6}^5} G_{n,6}^5(hd) \bigg)$; 
			\item $M_{n,6}^5(d) =\bigg( \bigcup\limits_{h\in D_{g,6}^1} G_{n,6}^5(hd) \bigg) \cup \bigg( \bigcup\limits_{h\in D_{g,6}^5} G_{n,6}^1(hd) \bigg)$.
		\end{enumerate}
	\end{lema}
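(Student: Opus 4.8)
The plan is to follow the proof of Lemma~\ref{SecLemmaSetEqua} almost verbatim, with the residue system modulo $3$ replaced by the one modulo $6$. The structural fact that makes this work is that $(\mathbb{Z}/6\mathbb{Z})^\times=\{1,5\}$ is, like $(\mathbb{Z}/3\mathbb{Z})^\times=\{1,2\}$, isomorphic to $\mathbb{Z}/2\mathbb{Z}$, with multiplication $1\cdot 1\equiv 1$, $1\cdot 5\equiv 5$, $5\cdot 5\equiv 1\Mod 6$; so the case analysis transfers unchanged. Part (i) needs no work beyond observing $1\not\equiv 5\Mod 6$.

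For part (ii) I would use $G_n(d)=dG_{n/d}(1)$, so that an element of $G_n(d)$ is $d\alpha$ with $\gcd(\alpha,n/d)=1$. Since $d\mid\frac{n}{6}$ we have $6\mid\frac{n}{d}$, which forces $\gcd(\alpha,6)=1$, i.e.\ $\alpha\equiv 1$ or $5\Mod 6$; this gives $G_n(d)\subseteq G_{n,6}^1(d)\cup G_{n,6}^5(d)$, and the reverse inclusion is immediate from the definitions.

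For parts (iii) and (iv), the substantive ones, I would argue exactly as in the proof of Lemma~\ref{SecLemmaSetEqua}(iii). Given $dk\in M_{n,6}^1(d)$ we have $\gcd(k,6)=1$ and $dk\in M_n(d)$, so Lemma~\ref{BasicProp} (with $n=d\cdot 6g$) yields a divisor $h$ of $6g$ with $dk\in G_n(hd)$, say $dk=\alpha hd$ and $\gcd(\alpha,\frac{6g}{h})=1$; then $k=\alpha h$, and coprimality of $k$ with $6$ forces both $h$ and $\alpha$ to be coprime to $6$, whence $h\mid g$ and so $h\in D_{g,6}^1\cup D_{g,6}^5$. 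Part (ii) applied to $G_n(hd)$ puts $dk$ in $G_{n,6}^1(hd)\cup G_{n,6}^5(hd)$, and splitting on $h\equiv 1$ or $h\equiv 5\Mod 6$ the multiplication table above determines which summand contains it (for instance, $h\equiv 1$ together with $\alpha\equiv 5$ would give $k\equiv 5\Mod 6$, a contradiction). The reverse inclusion is a direct check on multipliers: $G_{n,6}^1(hd)$ with $h\in D_{g,6}^1$ contributes classes $1\cdot 1\equiv 1$, and $G_{n,6}^5(hd)$ with $h\in D_{g,6}^5$ contributes $5\cdot 5\equiv 1\Mod 6$, both landing inside $M_{n,6}^1(d)$. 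Part (iv) is the identical argument tracking the class $5$, the products $1\cdot 5\equiv 5$ and $5\cdot 5\equiv 1$ doing the bookkeeping.

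I do not anticipate a real obstacle; the single point that needs care is ensuring every divisor $h$ occurring in the decomposition is coprime to $6$ — equivalently, lies in $D_{g,6}^1\cup D_{g,6}^5$ and automatically makes $\alpha$ coprime to $6$ — and this is precisely where the hypothesis $d\mid\frac{n}{6}$ is used. Apart from that, the argument is the residue arithmetic of $(\mathbb{Z}/6\mathbb{Z})^\times$, which is formally the same as that of $(\mathbb{Z}/3\mathbb{Z})^\times$ treated in Lemma~\ref{SecLemmaSetEqua}, so nothing genuinely new arises.
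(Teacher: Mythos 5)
Your proposal is correct and matches the paper's approach exactly: the paper's entire proof of this lemma is the single sentence ``The proof is similar to the proof of Lemma~\ref{SecLemmaSetEqua},'' and you have carried out precisely that adaptation, correctly identifying that the only structural input is the multiplication table of $(\mathbb{Z}/6\mathbb{Z})^\times \cong (\mathbb{Z}/3\mathbb{Z})^\times$ and that the hypothesis $d \mid \frac{n}{6}$ forces every divisor $h$ arising from Lemma~\ref{BasicProp} to be coprime to $6$.
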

	\begin{proof}
	The proof is similar to the proof of Lemma $\ref{SecLemmaSetEqua}$.
	\end{proof}
	
Note that $D_{n,6}^5 = D_{\frac{n}{6},6}^5$. In the next result, we calculate $T_n(j)$ for the values of $n$ not covered in Theorem \ref{sumformulaThird}.
	
	\begin{theorem}\label{sumformula} 
		Let $n\equiv 0 \Mod 6$ and $D_{n,6}^5=\emptyset$. Then 
		\begin{align*}
		T_n(j)=\sum_{\substack{d \mid \frac{n}{6} \\ \frac{j}{d}\equiv 1 \mbox{ or } 2 \Mod {6} }} -3d \mu_P\left(\frac{n}{6d}\right) +  \sum_{\substack{d \mid \frac{n}{6} \\ \frac{j}{d}\equiv 4 \mbox{ or } 5 \Mod {6} }} 3d \mu_P\left(\frac{n}{6d}\right),
		\end{align*}
	\end{theorem}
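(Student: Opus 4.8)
The plan is to follow, almost line for line, the proof of Theorem~\ref{sumformulaThird}, using the mod-$6$ set identities of Lemma~\ref{SecLemmaSetEquaForSix} in place of the mod-$3$ identities of Lemma~\ref{SecLemmaSetEqua}, and Cohen's generalized M\"obius inversion (Theorem~\ref{cohenMobiInvForm}) with the direct factor set $P=\{2^{k}:k\geq 0\}$ and conjugate factor set $Q$ equal to the set of odd positive integers.

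First I would introduce the auxiliary function
\[
f_n(j):=\sum_{a\in M_{n,6}^{1}(1)} i\sqrt{3}\,\big(\omega_n^{aj}-\omega_n^{-aj}\big).
\]
Here $M_{n,6}^{1}(1)$ is the arithmetic progression $1,7,13,\dots,n-5$ of length $\frac{n}{6}$, so the geometric sum $\sum_{a\in M_{n,6}^{1}(1)}\omega_n^{aj}$ equals $\frac{n}{6}\,\omega_6^{s}$ when $\frac{n}{6}\mid j$ (on writing $j=\frac{n}{6}s$) and vanishes otherwise; evaluating $\omega_6^{s}-\omega_6^{-s}$ over the residue $s$ modulo $6$ then yields
\[
f_n(j)=\begin{cases}
-\frac{n}{2}, & j\equiv \frac{n}{6}\ \text{or}\ \frac{n}{3}\pmod n,\\
\phantom{-}\frac{n}{2}, & j\equiv \frac{2n}{3}\ \text{or}\ \frac{5n}{6}\pmod n,\\
\phantom{-}0, & \text{otherwise.}
\end{cases}
\]
Next, taking $d=1$ in Lemma~\ref{SecLemmaSetEquaForSix}(iii) and using the hypothesis $D_{n,6}^{5}=D_{n/6,6}^{5}=\emptyset$, one gets the disjoint decomposition $M_{n,6}^{1}(1)=\bigcup_{h\in D_{n/6,6}^{1}}G_{n,6}^{1}(h)$. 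Since $G_{n,6}^{1}(h)=h\,G_{n/h,6}^{1}(1)$, since $6\mid n$ and $\gcd(h,6)=1$ imply $6\mid n/h$ and hence $G_{n/h,6}^{1}(1)=G_{n/h,3}^{1}(1)$, and since $\omega_n^{h}=\omega_{n/h}$ is a primitive $\tfrac{n}{h}$-th root of unity, the substitution $a=hb$ term by term gives $\sum_{a\in G_{n,6}^{1}(h)}i\sqrt{3}(\omega_n^{aj}-\omega_n^{-aj})=T_{n/h}(j)$, so that $f_n(j)=\sum_{h\in D_{n/6,6}^{1}}T_{n/h}(j)$.

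Because the indices $h$ here are all $\equiv 1\pmod 6$, hence odd, we have $D_{n/6,6}^{1}\subseteq Q$, and the last identity is precisely of the shape to which Theorem~\ref{cohenMobiInvForm} applies (with $g(n)=T_n(j)$, $f(n)=f_n(j)$, and the convention $f_d(j)=0$ whenever $6\nmid d$). Inverting gives $T_n(j)=\sum_{d\mid n}f_d(j)\,\mu_P\!\big(\tfrac{n}{d}\big)$. Only the divisors $d=6d'$ with $d'\mid\tfrac{n}{6}$ contribute, and for these the explicit form of $f$ found above shows $f_{6d'}(j)=-3d'$ exactly when $j\equiv d'$ or $2d'\pmod{6d'}$, i.e. $\tfrac{j}{d'}\equiv 1$ or $2\pmod 6$, and $f_{6d'}(j)=3d'$ exactly when $\tfrac{j}{d'}\equiv 4$ or $5\pmod 6$, while $\mu_P(\tfrac{n}{d})=\mu_P(\tfrac{n}{6d'})$. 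Collecting the surviving terms produces the stated formula; note that, in contrast with Theorem~\ref{sumformulaThird}, the subscript on $\mu_P$ cannot be removed, since $\tfrac{n}{6d'}$ need not be odd.

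The main obstacle is the pair of steps just described: one must verify that the disjoint union coming out of Lemma~\ref{SecLemmaSetEquaForSix} genuinely has the ``$\sum_{d\mid n,\ d\in Q}$'' form demanded by Cohen's inversion theorem, and one must carry the $2$-adic and $3$-adic bookkeeping (through $d=6d'$ and the residue of $j/d'$ modulo $6$) that is responsible for the appearance of $\mu_P$ rather than the classical $\mu$. The evaluation of $f_n(j)$ and the final collection of terms are then routine; if the statement is meant to carry the further assertion that $\tfrac{T_n(j)}{3}$ is an integer, this follows as in Lemmas~\ref{Tn(q)SumEqualTo3TimesSum00}--\ref{SameParitySum} from the integrality and divisibility properties of Ramanujan sums.
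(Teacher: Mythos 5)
Your proposal matches the paper's own proof essentially step for step: the same auxiliary function $f_n(j)$ over $M_{n,6}^1(1)$ with the same case evaluation, the same decomposition via Lemma~\ref{SecLemmaSetEquaForSix} under the hypothesis $D_{n,6}^5=\emptyset$ to get $f_n(j)=\sum_{h\in D_{n,6}^1}T_{n/h}(j)$, the same appeal to Theorem~\ref{cohenMobiInvForm} with $D_{n,6}^1\subseteq Q$, and the same substitution $d\mapsto 6d$ at the end. Your added remarks (the explicit geometric-sum evaluation, the convention $f_d(j)=0$ for $6\nmid d$, and why $\mu_P$ cannot be replaced by $\mu$ here) are elaborations of the same argument, not a different route.
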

	\begin{proof}
		Let \begin{equation*}
			\begin{split}
				f_n(j) =\sum_{a\in M_{n,6}^1(1) } i\sqrt{3}(\omega_n^{aj}- \omega_n^{-aj})
				= \left\{ \begin{array}{rl}
			-\frac{n}{2} & \mbox{ if }  j\equiv \frac{n}{6} \Mod n \\
			-\frac{n}{2} & \mbox{ if }  j\equiv \frac{2n}{6} \Mod n \\
			0 & \mbox{ if }  j\equiv \frac{3n}{6} \Mod n \\
			\frac{n}{2} & \mbox{ if }  j\equiv \frac{4n}{6} \Mod n \\
			\frac{n}{2} & \mbox{ if }  j\equiv \frac{5n}{6} \Mod n \\
			0 &\mbox{ otherwise.} 
		\end{array}\right.
			\end{split} 
		\end{equation*}
By Lemma~\ref{SecLemmaSetEquaForSix} we get
		\begin{equation*}
			\begin{split}
				f_n(j)= \sum_{a\in M_{n,6}^1(1) } i\sqrt{3}(\omega_n^{aj}- \omega_n^{-aj}) &=  \sum_{d\in D_{n,6}^1} \sum_{a\in G_{n,6}^1(d) }  i\sqrt{3}(\omega_n^{aj}- \omega_n^{-aj}) \\ 
				&=  \sum_{d\in D_{n,6}^1} \sum_{a\in dG_{\frac{n}{d},6}^1(1) }  i\sqrt{3}(\omega_n^{aj}- \omega_n^{-aj}) \\
				&= \sum_{d\in D_{n,6}^1} \sum_{a\in G_{\frac{n}{d},6}^1(1) }  i\sqrt{3}[(\omega_n^d)^{aj}- (\omega_n^d)^{-aj}] \\
				&= \sum_{d\in D_{n,6}^1} T_{\frac{n}{d}}(j).
			\end{split} 
		\end{equation*}
In the last equation, we have used the fact that $\omega_n^d$ is a primitive $\frac{n}{d}$-th root of unity and $G_{\frac{n}{d},6}^1(1)= G_{\frac{n}{d},3}^1(1)$ for all $d\in D_{n,6}^1$. Since $D_{n,6}^1 \subset Q$, by Theorem~\ref{cohenMobiInvForm} we get

		\begin{align*}
				T_n(j)=&\sum_{\substack{d \mid n \\ j\equiv \frac{d}{6} \Mod d }} f_d(j) \mu_P\left(\frac{n}{d}\right) + \sum_{\substack{d \mid n \\ j\equiv \frac{2d}{6} \Mod d }} f_d(j) \mu_P\left(\frac{n}{d}\right)\\
				& + \sum_{\substack{d \mid n \\ j\equiv \frac{4d}{6} \Mod d }} f_d(j) \mu_P\left(\frac{n}{d}\right) + \sum_{\substack{d \mid n \\ j\equiv \frac{5d}{6} \Mod d }} f_d(j) \mu_P\left(\frac{n}{d}\right)\\
				=&\sum_{\substack{d \mid n \\ j\equiv \frac{d}{6} \mbox{ or } \frac{2d}{6} \Mod d }} -\frac{d}{2} \mu_P\left(\frac{n}{d}\right) + \sum_{\substack{d \mid n \\ j\equiv \frac{4d}{6} \mbox{ or } \frac{5d}{6} \Mod d }} \frac{d}{2} \mu_P\left(\frac{n}{d}\right) \\
				=&\sum_{\substack{6d \mid n \\ j\equiv d \mbox{ or } 2d \Mod {6d} }} -3d \mu_P\left(\frac{n}{6d}\right)  + \sum_{\substack{6d \mid n \\ j\equiv 4d \mbox{ or } 5d \Mod {6d} }} 3d \mu_P\left(\frac{n}{6d}\right)\\
				=&\sum_{\substack{d \mid \frac{n}{6} \\ \frac{j}{d}\equiv 1 \mbox{ or } 2 \Mod {6} }}- 3d \mu_P\left(\frac{n}{6d}\right) +  \sum_{\substack{d \mid \frac{n}{6} \\ \frac{j}{d}\equiv 4 \mbox{ or } 5 \Mod {6} }} 3d \mu_P\left(\frac{n}{6d}\right).
		\end{align*} 
	\end{proof}

%%%%%%%%%%%%%%%%%%%%%%%%%%%%%%%%%%%%%%%%%%%%%%%%%%%%%%%%%%%%%%%%%%%%%%%%%%%%%%


\begin{thebibliography}{10}

%\bibitem{abdollahi2009cayley}
%Abdollahi A, Vatandoost E.
%\newblock Which Cayley graphs are integral?
%\newblock {\em The Electronic Journal of Combinatorics.} 2009; 16(1): R122.

\bibitem{ahmadi2009graphs}
O. Ahmadi, N. Alon, I.F. Blake  and I.E. Shparlinski.
\newblock Graphs with integral spectrum.
\newblock {\em Linear Algebra and its Applications} 430(1) (2009), 547--552.

%\bibitem{alperin2012integral}
%Alperin RC, Peterson BL.
%\newblock Integral sets and Cayley graphs of finite groups.
%\newblock {\em The Electronic Journal of Combinatorics.} 19(1); 2012: P44.

\bibitem{balinska2002survey}
	K.~Bali{\'n}ska, D.~Cvetkovi{\'c}, Z.~Radosavljevi{\'c}, S.~Simi{\'c} and
	D.~Stevanovi{\'c}.
	\newblock A survey on integral graphs.
	\newblock {\em Publikacije Elektrotehni{\v{c}}kog fakulteta. Serija Matematika} (2002), 42--65.
	
%\bibitem{bridges1982rational}
%Bridges WG, Mena RA.
%\newblock Rational g-matrices with rational eigenvalues.
%\newblock {\em Journal of Combinatorial Theory.} Series A 32(2); 1982: 264--280.
%
%\bibitem{brouwer2008small}
%Brouwer AE.
%\newblock Small integral trees.
%\newblock {\em The Electronic Journal of Combinatorics.} 15; 2008: N1.
%
%\bibitem{brouwer2008integral}
%Brouwer AE, Haemers WH.
%\newblock The integral trees with spectral radius 3.
%\newblock {\em Linear Algebra and its Applications.} 429(11-12); 2008: 2710--2718.
%
%\bibitem{bussemaker1976there}
%Bussemaker FC, Cvetkovi{\'c} DM.
%\newblock There are exactly 13 connected, cubic, integral graphs.
%\newblock {\em Publikacije Elektrotehni{\v{c}}kog fakulteta. Serija Matematika
%  i fizika.} (544/576); 1976: 43--48.
%
%\bibitem{cheng2019integral}
%Cheng T, Feng L, Huang H.
%\newblock Integral Cayley graphs over dicyclic group.
%\newblock {\em Linear Algebra and its Applications.} 566; 2019: 121--137.           

\bibitem{bapat2012weighted}
R.B.~Bapat, D.~Kalita, and S.~Pati.
\newblock On weighted directed graphs.
\newblock {\em Linear Algebra and its Applications}, 436(1) (2012), 99--111.

%	\bibitem{basic}
%	M. Ba\v si\'c, M. D. Petkovi\'c and D. Stevanovi\'c.
%	\newblock Perfect state transfer in integral circulant graphs.
%	\newblock {\em Applied Mathematics Letters} 22 (2009), 1609-1615.

\bibitem{cohen}
	E.~Cohen.
	\newblock A class of residue systems (mod $ r $) and related arithmetical
	functions. i. a generalization of m{\"o}bius inversion.
	\newblock {\em Pacific Journal of Mathematics} 9(1) (1959), 13--23.
	
\bibitem{csikvari2010integral}
	P.~Csikv{\'a}ri.
	\newblock Integral trees of arbitrarily large diameters.
	\newblock {\em Journal of Algebraic Combinatorics} 32(3) (2010), 371--377.
	
	
\bibitem{circulant}
	P.J. Davis.
	\newblock {\em Circulant matrices}.
	\newblock Wiley (1979).
	
	
%\bibitem{godsil2014rationality}
%C.~Godsil C, Spiga P.
%\newblock Rationality conditions for the eigenvalues of normal finite Cayley
%  graphs.
%\newblock {\em arXiv:1402.5494}; 2014.
%
%\bibitem{2017mixed}
%Guo K, Mohar B.
%\newblock Hermitian adjacency matrix of digraphs and mixed graphs.
%\newblock {\em Journal of Graph Theory.} 85(1);  2017: 217--248.

\bibitem{harary1974graphs}
	F.~Harary and A.J. Schwenk.
	\newblock Which graphs have integral spectra?
	\newblock In {\em Graphs and Combinatorics}. Springer (1974), 45--51.
	
	\bibitem{numbertheory}
	K.~Ireland and M.~Rosen.
	\newblock {\em A classical introduction to modern number theory}.
	\newblock Springer Verlag (1984).
	
%\bibitem{kadyan2021integral}
%Kadyan M, Bhattacharjya B.
%\newblock Integral mixed circulant graph.
%\newblock {\em arXiv:2106.01261}; 2021.
%
%\bibitem{kadyan2021integralAbelian}
%Kadyan M, Bhattacharjya B.
%\newblock Integral mixed Cayley graphs over abelian groups.
%\newblock {\em The Electronic Journal of Combinatorics.} 28(4); 2021: P4.46.
%
%\bibitem{kadyan2021integralNormal}
%Kadyan M, Bhattacharjya B.
%\newblock H-integral normal mixed Cayley graphs.
%\newblock {\em arXiv:2110.03268}; 2021.
%
%\bibitem{klotz2010integral}
%Klotz W, Sander T.
%\newblock Integral Cayley graphs over abelian groups.
%\newblock {\em The Electronic Journal of Combinatorics.} 17; 2010: R81.
%
%\bibitem{ku2015Cayley}
%Ku CY, Lau T, Wong KB.
%\newblock Cayley graph on symmetric group generated by elements fixing k
%  points.
%\newblock {\em Linear Algebra and its Applications.} 471; 2015: 405--426.
%
%\bibitem{lepovic2005there}
%Lepovi{\'c} M, Simi{\'c} SK, Bali{\'n}ska KT, Zwierzy{\'n}ski KT.
%\newblock There are 93 non-regular, bipartite integral graphs with maximum
%  degree four.
%\newblock {\em The Technical University of Pozna{\'n}, CSC Report.} 511; 2005.
%
%\bibitem{li2013circulant}
%Li F.
%\newblock Circulant digraphs integral over number fields.
%\newblock {\em Discrete Mathematics.} 313(6); 2013: 821--823.
%
%\bibitem{lu2018integral}
%Lu L, Huang Q, Huang X.
%\newblock Integral Cayley graphs over dihedral groups.
%\newblock {\em Journal of Algebraic Combinatorics.} 47(4); 2018: 585--601.
%
%\bibitem{2015mixed}
%Liu J, Li X.
%\newblock Hermitian-adjacency matrices and hermitian energies of mixed graphs.
%\newblock {\em Linear Algebra and its Applications.} 466; 2015: 182--207.

\bibitem{ours1}
 M. Kadyan and B. Bhattacharjya.
\newblock HS-integral and Eisenstein integral mixed Cayley graphs over abelian groups.
\newblock {\em Linear Algebra and its Applications} 645 (2022), 68-90.

\bibitem{ours2}
 M. Kadyan and B. Bhattacharjya.
\newblock Integral mixed circulant graphs.
\newblock arXiv preprint arXiv:2106.01261, 2021.


\bibitem{mohar2020new}
 B. Mohar.
\newblock A new kind of Hermitian matrices for digraphs.
\newblock {\em Linear Algebra and its Applications} 584 (2020), 343--352.

\bibitem{murty2008problems}
	M.R. Murty.
	\newblock {\em Problems in analytic number theory}. Volume 206, 
	\newblock Springer Science \& Business Media (2008).
	
\bibitem{ramanujan1918certain}
	S.~Ramanujan.
	\newblock On certain trigonometrical sums and their applications in the theory
	of numbers.
	\newblock {\em Transactions of the Cambridge Philosophical Society} 22(13) (1918), 259--276.
	
	\bibitem{2006integral}
	W.~So.
	\newblock Integral circulant graphs.
	\newblock {\em Discrete Mathematics} 306(1) (2006), 153--158.
	
%\bibitem{stevanovic20034}
%Stevanovi{\'c} D.
%\newblock 4-regular integral graphs avoiding $\pm$3 in the spectrum.
%\newblock {\em Publikacije Elektrotehni{\v{c}}kog fakulteta. Serija Matematika.} 2003: 99--110.
%
%\bibitem{wang2000some}
%Wang L, Li X.
%\newblock Some new classes of integral trees with diameters 4 and 6.
%\newblock {\em Australasian Journal of Combinatorics.} 21; 2000: 237--244.
%
%\bibitem{wang2002integral}
%Wang L, Li X, Yao X.
%\newblock Integral trees with diameters 4, 6 and 8.
%\newblock {\em Australasian Journal of Combinatorics.} 25; 2002: 29--44.

\bibitem{watanabe1979note}
	M.~Watanabe.
	\newblock Note on integral trees.
	\newblock {\em Mathematics Reports} 2 (1979), 95--100.
	
	\bibitem{watanabe1979integral}
	M.~Watanabe and A.J. Schwenk.
	\newblock Integral starlike trees.
	\newblock {\em Journal of the Australian Mathematical Society} 28(1) (1979), 120--128.
	
%\bibitem{xu2011gaussian}
%Xu Y, Meng J.
%\newblock Gaussian integral circulant digraphs.
%\newblock {\em Discrete Mathematics.} 311(1); 2011: 45--50.

\end{thebibliography}
\end{document}